\documentclass{article}

\usepackage[utf8]{inputenc}
\usepackage[british]{babel}
\usepackage{graphicx} 
\usepackage{graphicx}
\usepackage{amsmath}
\usepackage{amssymb}
\usepackage{amsthm}
\usepackage{multicol}
\usepackage{multirow}
\usepackage{caption}
\usepackage{subcaption}
\usepackage{wrapfig}
\usepackage{url}
\usepackage{array}
\usepackage[mathscr]{euscript}
\usepackage{float}
\usepackage{xspace}
\usepackage{tikz}
\usepackage{tikz-dependency}
\usepackage{tabularx}
\usetikzlibrary{shapes, arrows, calc, arrows.meta, fit, positioning} 
\tikzset{  
    -Latex,auto,node distance =1 cm and 0.7 cm, thick,
    state/.style ={circle, fill=lightgray, draw, minimum width = 0.01 cm}, 
    emptystate/.style ={}, 
    breakarrow/.style={dashed}
    point/.style = {circle, draw, inner sep=0.18cm, fill, node contents={}},  
    bidirected/.style={Latex-Latex,dashed}, 
    el/.style = {inner sep=2.5pt, align=right, sloped}  
} 
\renewcommand{\arraystretch}{1.5}
\usepackage{thmtools}
\usepackage{thm-restate}

\usepackage{hyperref}		
\hypersetup{
  colorlinks,
  citecolor=black,
  linkcolor=black,
  urlcolor=black
}

\newcommand{\4}{{\mathfrak{4}}\xspace}
\newcommand{\K}{\ensuremath{\mathcal{\mathbf{K}}}\xspace}
\newcommand{\RC}{\ensuremath{\mathcal{\mathbf{RC}}}\xspace}
\newcommand{\GLP}{\ensuremath{\mathcal{\mathbf{GLP}}}\xspace}
\newcommand{\TRC}{\ensuremath{{\sf TRC}}\xspace}
\newcommand{\MT}{\ensuremath{{\sf Tree}^{{\diamond}}}\xspace}
\newcommand{\md}{\ensuremath{{\sf md}}\xspace}
\newcommand{\J}{{\ensuremath{\mathsf{J}}}\xspace}
\newcommand{\la}{\langle}
\newcommand{\ra}{\rangle}
\newcommand{\h}{\ensuremath{{\sf h}}\xspace}
\newcommand{\T}{{\mathscr{T}}\xspace}
\newcommand{\F}{{\mathscr{F}}\xspace}

\newtheorem{theorem}{Theorem}[section]

\newtheorem{definition}[theorem]{Definition}
\newtheorem{lemma}[theorem]{Lemma}

\newtheorem{corollary}[theorem]{Corollary}
\newtheorem{proposition}[theorem]{Proposition}

\newtheorem{remark}[theorem]{Remark}

\newcounter{nc}

\hyphenation{%
nor-ma-li-za-tion
for-ma-li-za-tion
se-ve-ral
de-ri-va-tions
co-rres-pon-den-ce
pro-blems
re-wri-ting
e-ffi-cien-cy
fo-llo-wing
a-de-qua-cy
co-rres-pon-ding
ne-ce-ssa-ry
me-ti-cu-lous
a-pplied
con-si-de-ring
ma-nner
theo-rem
se-cond
af-ter}

\title{A tree rewriting system for the Reflection Calculus}
\author{Sofía Santiago-Fernández\thanks{\url{sofia.santiago@ub.edu}} \and Joost J. Joosten\thanks{\url{jjoosten@ub.edu}} \and David Fernández-Duque\thanks{\url{fernandez-duque@ub.edu}}}
\date{}

\begin{document}

\maketitle

  \begin{abstract}
    The \textit{Reflection Calculus} ($\RC$, c.f. \cite{Beklemishev2012}, \cite{dashkov2012positive}) is the fragment of the polymodal logic \GLP in the language $\mathcal{L}^+$ whose formulas are built up from $\top$ and propositional variables using conjunction and diamond modalities. \RC is complete with respect to the arithmetical interpretation that associates modalities with reflection principles and has various applications in proof theory, specifically ordinal analysis.

    We present $\TRC$, a tree rewriting system (c.f. \cite{baader1998term}) that is adequate and complete with respect to $\RC$, designed to simulate $\RC$ derivations. $\TRC$ is based on a given correspondence between formulas of $\mathcal{L}^{+}$ and modal trees \MT. Modal trees are presented as an inductive type (c.f. \cite{fitting2002types}, \cite{pierce2002types}) allowing precise positioning and transformations which give rise to the formal definition of rewriting rules and facilitates formalization in proof assistants. Furthermore, we provide a rewrite normalization theorem for systematic rule application. The normalization of the rewriting process enhances proof search efficiency and facilitates implementation (c.f. \cite{wos1992automated}, \cite{goubault2001proof}, \cite{newborn2000automated}). 
    
    By providing $\TRC$ as an efficient provability tool for \RC, we aim to help on the study of various aspects of the logic such as the subformula property and rule admissibility.
  \end{abstract}

\section{Introduction}

Modal logics provide an attractive alternative to first or higher order logic for computational applications, largely due to the fact that they often enjoy a decidable consequence relation while remaining expressive enough to describe intricate processes.
However, decidability alone does not suffice for practical implementation when complexity comes at a hefty price tag; even propositional logic is {\sc np}-complete, which quickly becomes intractable as formula size and especially the number of variables is large.

This is no longer an issue when working in {\em strictly positive} fragments (see e.g.~\cite{kikot2019kripke}), which in contrast enjoy a polynomially decidable consequence relation.
Strictly positive formulas do not contain negation and instead are built from atoms and $\top$ using conjunction and $\Diamond$ (or, more generally, a family of modalities $\langle i\rangle$ indexed by $i$ in some set $I$).
Strictly positive formulas tend to be contingent, so validity and satisfiability are no longer the most central problems, but the consequence relation is indeed useful for example for reasoning about ontologies and is the basis for some description logics~\cite{BaaderCalvaneseEtAl2007}.

One remarkable success story for strictly positive logics comes from the {\em reflection calculus} ($\bf RC$)~\cite{dashkov2012positive,Beklemishev2014Positive}.
Beklemishev has shown how Japaridze's polymodal provability logic $\bf GLP$~\cite{Japaridze:1988} can be used to perform a proof-theoretic analysis of Peano aritmetic and its fragments~\cite{Beklemishev:2004}; however, the logic $\bf GLP$ is notoriously difficult to work with, especially as it is not Kripke-complete.
In contrast, its strictly positive fragment is rather tame from both a theoretical and computational point of view, yet suffices for the intended proof-theoretic applications.

The current work is inspired by two distinct ideas that have arisen in the study of strictly positive logics.
The first is the tree representation of formulas, which yield a way to decide strictly positive implications.
This was developed by Kikot et al.~\cite{kikot2019kripke} in a general setting and by Beklemishev~\cite{Beklemishev2014Positive}
in the context of $\bf RC$.
In both cases, one assigns to each strictly positive formula $\varphi$ a finite, tree-like Kripke model $T(\varphi)$ with the crucial property that $\varphi\to\psi$ is valid if and only if $T(\varphi) \models \psi$.
Thus the study of strictly positive fragments can be reduced to the study of their tree-like Kripke models.

The second is the connection of strictly positive calculi to term rewrite systems.
Strictly positive formulas and, particularly, those built exclusively from $\top$ and the modalities $\langle i\rangle$, may be regarded as {\em words} (or `worms'). This has prompted Beklemishev~\cite{Beklemishev2018-BEKANO} to view strictly positive fragments as term-rewriting systems~\cite{baader1998term}, but connections between such systems and modal logic are not new and can be traced back to Foret~\cite{Foret1992}.

Term rewriting is a discipline that integrates elements of logic, universal algebra, automated theorem proving, and functional programming.
It has applications in algebra (e.g. Boolean algebra), recursion
theory (computability of rewriting rules), software engineering and programming languages
(especially functional and logic programming \cite{Terese03}), 
with the $\lambda$-calculus perhaps being the most familiar example~\cite{LambdaBook}. 
Of particular interest to us, tree rewriting systems~\cite{TreePaper} are term rewriting systems such that terms are trees.  

When terms represent formulas, rewrite rules are similar to deep inference rules, i.e.~rules which may be applied to strict subformulas of the displayed formulas.
This is the approach taken by Shamkanov~\cite{shamkanov2015nested} for developing a cut-free calculus for $\bf GLP$.
As is the case for other technical differences between $\bf GLP$ and the reflection calculus, our tree rewrite system makes up for the loss in expressive power with increased simplicity and transparent Kripke semantics.

Our approach is to recast $\bf RC$ as an abstract rewriting system in which terms are trees.
In the parlance of rewrite systems, cut-elimination can be viewed as a normalization procedure for derivations.
In our setting we do not have an analogue of the cut rule, but we do obtain a rewriting normalization theorem which states that the rewriting process can be consistently and efficiently executed by grouping rewriting rules by their kinds and applying them in a designated sequence. By doing so, it enhances our comprehension of the dynamics of the tree rewriting system, offering valuable insights into the nature of the rewriting process and the interplay among rules. Moreover, it furnishes an efficient framework for proof search methodologies. Thanks to the normalization theorem, the need for exhaustive exploration is minimized by focusing on normalized rewriting sequences, which mitigates the risk of redundancy in rewriting. Consequently, when searching for a proof of a certain result, we only need to consider the normalized derivations, thereby reducing the proof search space and improving computational efficiency~\cite{goubault2001proof}. Furthermore, it serves as a practical guide for implementing the rewriting process in proof assistants~\cite{newborn2000automated}.

In our presentation, we make use of the inductive type of lists within the framework of type theory (cf. \cite{pierce2002types}, \cite{fitting2002types}) to define the trees in our tree rewriting system. The use of lists allows to define inductive structures with an order, facilitating the specification of internal positions and transformations for rewriting systems, and its formalization in proof assistants.

Since lists play such a central role in our work, we conclude this introduction by establishing some notation. A list of elements of type $\mathcal{A}$ is either the empty list $\varnothing$ or $[x] \smallfrown L$ for $x$ an element of type $\mathcal{A}$, a list $L$ of elements of type $\mathcal{A}$ and $\smallfrown$ the operator of concatenation of lists. We write $x \smallfrown L$ and $L \smallfrown x$ to denote $[x] \smallfrown L$ and $L \smallfrown [x]$, respectively. The length of a list $L$ of elements of type $\mathcal{A}$ is denoted by $|L|$.

\section{From $\K^+ $ to $\RC$}

In this section we present the basic sequent-style system $\K^+$ for the language of strictly positive formulae, concluding by an introduction to the Reflection Calculus ($\RC$) as an extension of $\K^+$. 

We consider the language of strictly positive formulae $\mathcal{L}^+$ composed from propositional variables $p$,$q$,..., in ${\sf Prop}$, the constant $\top$, and connectives $\wedge$ for conjunction and $\la \alpha \ra$ for diamond modalities for each ordinal $\alpha \in \omega$. Formally, the strictly positive formulae $\varphi$ of $\mathcal{L}^+$ are generated by the following grammar: 
\begin{equation*}
\varphi ::= \top \hspace{0.1cm}|\hspace{0.1cm} p \hspace{0.1cm}|\hspace{0.1cm} \la \alpha \ra \varphi \hspace{0.1cm}|\hspace{0.1cm} (\varphi \wedge \varphi), \hspace{0.2cm} \alpha \in \omega \text{ and } p \in {\sf Prop}.
\end{equation*}

The \textit{modal depth} of $\varphi$, denoted by $\md(\varphi)$, is recursively defined as $\md(\top) := 0$, $\md(p) := 0$ for $p \in {\sf Prop}$, $\md(\la \alpha \ra \varphi) := \md(\varphi) + 1$ and $\md(\varphi \wedge \psi) := {\sf max}\{\md(\varphi),\md(\psi)\}$. 

\textit{Sequents} are expressions of the form $\varphi \vdash \psi$ for $\varphi, \psi \in \mathcal{L}^{+}$. If ${\sf L}$ is a logic, we write $\varphi \vdash_{\sf L} \psi$ for the statement that $\varphi \vdash \psi$ is provable in ${\sf L}$. We write $\varphi \equiv_{\sf L} \psi$ to denote $\varphi \vdash_{\sf L} \psi$ and $\psi \vdash_{\sf L} \varphi$.

Polymodal $\K$ can be readily adapted to its strictly positive variant, where most notably the necessitation rule is replaced by distribution for each diamond modality.

\begin{definition}($\K^+$, \cite{Beklemishev2018-BEKANO})
The basic sequent-style system $\K^+$ is given by the following axioms and rules:
\begin{description}
\item $\varphi \vdash_{\K^+} \varphi$; $\varphi \vdash_{\K^+} \top$;
\item if $\varphi \vdash_{\K^+} \psi$ and $\psi \vdash_{\K^+} \phi$ then $\varphi \vdash_{\K^+} \phi$ (cut);
\item $\varphi \wedge \psi \vdash_{\K^+} \varphi$ and $\varphi \wedge \psi \vdash_{\K^+} \psi$ (elimination of conjunction);
\item if $\varphi \vdash_{\K^+} \psi$ and $\varphi \vdash_{\K^+} \phi$ then $\varphi \vdash_{\K^+} \psi \wedge \phi$ (introduction to conjunction);
\item if $\varphi \vdash_{\K^+} \psi$ then $\langle \alpha \rangle \varphi \vdash_{\K^+} \langle \alpha \rangle \psi$ (distribution).
\end{description}
\end{definition}

For $\Pi$ a finite list of strictly positive formulae, $\bigwedge \Pi$ is defined by $\top$ for $\Pi = \varnothing$ and $\varphi \wedge \bigwedge \hat{\Pi}$ for $\Pi = \varphi \smallfrown \hat{\Pi}$. Note that $\bigwedge (\Pi_1 \smallfrown \Pi_2) \equiv_{\K^+} \bigwedge \Pi_1 \wedge \bigwedge \Pi_2$ for $\Pi_1$ and $\Pi_2$ finite lists of strictly positive formulae.

A diamond modality can be distributed over a conjunction of formulas for $\K^+$ as follows. 

\begin{lemma}
\label{RCconjdiamondn}
$\langle \alpha \rangle (\varphi_1 \land ... \land \varphi_{n}) \vdash_{\K^+} \langle \alpha \rangle \varphi_1 \land ... \land \langle \alpha \rangle \varphi_{n}$.
\end{lemma}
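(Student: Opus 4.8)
The plan is to induct on $n$. For $n=1$ the statement is just the identity axiom $\langle \alpha \rangle \varphi_1 \vdash_{\K^+} \langle \alpha \rangle \varphi_1$, so there is nothing to prove (and if one wishes to include $n=0$, reading the empty conjunction as $\top$, the claim reduces to the axiom $\langle \alpha \rangle \top \vdash_{\K^+} \top$).

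For the inductive step, assume the result for $n$ and consider $\varphi_1 \wedge \dots \wedge \varphi_{n+1}$, which under the usual right-associated reading is $\varphi_1 \wedge (\varphi_2 \wedge \dots \wedge \varphi_{n+1})$. By elimination of conjunction, $\varphi_1 \wedge (\varphi_2 \wedge \dots \wedge \varphi_{n+1}) \vdash_{\K^+} \varphi_1$ and $\varphi_1 \wedge (\varphi_2 \wedge \dots \wedge \varphi_{n+1}) \vdash_{\K^+} \varphi_2 \wedge \dots \wedge \varphi_{n+1}$; applying distribution to each sequent yields
\[
\langle \alpha \rangle(\varphi_1 \wedge \dots \wedge \varphi_{n+1}) \vdash_{\K^+} \langle \alpha \rangle \varphi_1 \quad \text{and} \quad \langle \alpha \rangle(\varphi_1 \wedge \dots \wedge \varphi_{n+1}) \vdash_{\K^+} \langle \alpha \rangle(\varphi_2 \wedge \dots \wedge \varphi_{n+1}).
\]
By the induction hypothesis, $\langle \alpha \rangle(\varphi_2 \wedge \dots \wedge \varphi_{n+1}) \vdash_{\K^+} \langle \alpha \rangle \varphi_2 \wedge \dots \wedge \langle \alpha \rangle \varphi_{n+1}$, and cut with the second sequent above gives $\langle \alpha \rangle(\varphi_1 \wedge \dots \wedge \varphi_{n+1}) \vdash_{\K^+} \langle \alpha \rangle \varphi_2 \wedge \dots \wedge \langle \alpha \rangle \varphi_{n+1}$. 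Finally, introduction of conjunction applied to this together with $\langle \alpha \rangle(\varphi_1 \wedge \dots \wedge \varphi_{n+1}) \vdash_{\K^+} \langle \alpha \rangle \varphi_1$ delivers $\langle \alpha \rangle(\varphi_1 \wedge \dots \wedge \varphi_{n+1}) \vdash_{\K^+} \langle \alpha \rangle \varphi_1 \wedge \dots \wedge \langle \alpha \rangle \varphi_{n+1}$, as required.

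There is no genuine obstacle here; the only point requiring a little care is keeping the associativity of iterated conjunction consistent with the convention used for $\bigwedge \Pi$, so that the elimination- and introduction-of-conjunction rules apply directly. In fact it is cleanest to reformulate the statement in terms of a finite list $\Pi$ of formulae, proving by induction on $\Pi$ that $\langle \alpha \rangle \bigwedge \Pi \vdash_{\K^+} \bigwedge \Pi'$ where $\Pi'$ is obtained from $\Pi$ by prefixing each entry with $\langle \alpha \rangle$; this list-based phrasing is also the one best suited to the later formalization in a proof assistant.
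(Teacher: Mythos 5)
Your proof is correct and is exactly the induction on $n$ that the paper has in mind (the paper's own proof is just the one-line remark ``by an easy induction on $n$''). The details you supply — elimination of conjunction followed by distribution, cut with the induction hypothesis, and introduction of conjunction — are the standard instantiation of that induction, and your closing remark about phrasing the statement over lists $\Pi$ is consistent with the paper's conventions.
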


\begin{proof}
By an easy induction on $n$.
\end{proof}

We aim to define a tree rewriting system adequate and complete w.r.t. the Reflection Calculus, the strictly positive fragment of Japaridze's polymodal logic formulated as an extension of $\K^+$. 

\begin{definition}(\RC, \cite{Beklemishev2012}, \cite{dashkov2012positive})
The \textit{Reflection Calculus} (\RC) is the strictly positive modal logic extending $\K^+$ by the following axioms:
\begin{description}
\item $\langle \alpha \rangle \langle \alpha \rangle \varphi \vdash_{\RC} \langle \alpha \rangle \varphi$ (transitivity);
\item $\langle \alpha \rangle \varphi \vdash_{\RC} \langle \beta \rangle \varphi$, $\alpha > \beta$ (monotonicity);
\item $\la \alpha \ra \varphi \wedge \la \beta \ra \psi \vdash_{\RC} \la \alpha \ra (\varphi \wedge \la \beta \ra \psi)$, $\alpha > \beta$ (J).
\end{description}
\end{definition}

\section{Modal trees}

In this section we present modal trees, a concrete set of inductively defined trees on which our rewriting system is based, and the corresponding framework for their manipulation. Modal trees are finite labeled trees with nodes labeled with lists of propositional variables and edges labeled with ordinals less than $\omega$. Specifically, modal trees can be regarded as tree-like Kripke models of the form $(\mathcal{W}, \{R_\alpha\}_{\alpha \in \omega}, v)$ such that an ordinal $\alpha$ labels an edge if $R_\alpha$ relates the corresponding nodes, and a list of propositional variables labels a node if its elements are the only propositional variables being true under the valuation $v$ in that node. However, for technical convenience, both in presenting the rewrite system and in formalizing our results in a proof assistant, it will be convenient to present modal trees as inductively defined structures.
In particular, the children of a node of a modal tree are given by lists instead of sets, providing a default ordering on its children useful for unambiguously determining positions in the tree.

\begin{definition}(\MT) The set of modal trees \MT is defined recursively to be the set of pairs $\la \Delta; \Gamma \ra$ where $\Delta$ is a finite list of propositional variables and $\Gamma$ is a finite list of pairs $(\alpha, {\tt T})$, with $\alpha < \omega$ and ${\tt T} \in \MT$.
\end{definition} 

Elements of \MT will be denoted by ${\tt T} $ and ${\tt S}$. Note that we employ distinct notations to enhance clarity on wether a pair is a modal tree: $\langle \cdot ; \cdot \rangle$ is used for a pair representing a modal tree, while $(\cdot , \cdot)$ denotes a pair comprising an ordinal and a modal tree. The \textit{root} of a modal tree $\la \Delta ; \Gamma \ra$ is $\Delta$ and its \textit{children} is the list $[{\tt S} | (\alpha,{\tt S}) \in \Gamma]$. Note that, in general we write $[f(\alpha,{\tt S}) | (\alpha,{\tt S}) \in \Gamma]$ to denote the list $[f(\alpha_1,{\tt S}_1),...,f(\alpha_n,{\tt S}_n)]$ for $\Gamma = [(\alpha_1,{\tt S}_1),...,(\alpha_n,{\tt S}_n)]$, $n \geq 0$ and $f$ a function of domain ${\sf Ord}^{< \omega} \times \MT$. For the sake of readability we write $\gamma \in \Gamma$ and ${\tt T} \in \Gamma$ to denote $\gamma \in [\alpha | (\alpha, {\tt S}) \in \Gamma]$ and ${\tt T} \in [{\tt S} | (\alpha,{\tt S}) \in \Gamma]$ respectively, since the context permits a clear distinction. A modal tree is called a \textit{leaf} if it has an empty list of children. The \textit{height} of a modal tree ${\tt T}$, denoted by $\h({\tt T})$, is inductively defined as $\h(\la \Delta ; \varnothing \ra) := 0$ and $\h(\la \Delta ; \Gamma \ra) := {\sf max}[\h({\tt S}) | {\tt S} \in \Gamma] + 1$. 

The sum of modal trees is the tree obtained by concatenating their roots and children.

\begin{definition}
The \textit{sum} of modal trees ${\tt T}_1 = \la \Delta_1; \Gamma_{1}\ra$ and ${\tt T}_2 = \la \Delta_2; \Gamma_2\ra$ is defined as ${\tt T}_1 + {\tt T}_2 := \la\Delta_1 \smallfrown \Delta_2; \Gamma_1  \smallfrown \Gamma_2\ra$.    
\end{definition}

More generally, for $\Lambda$ a finite list of modal trees, $\sum \Lambda$ is defined as $\la \varnothing ; \varnothing \ra$ if $\Lambda = \varnothing$ and ${\tt T} + \sum \hat{\Lambda}$ if $\Lambda = {\tt T} \smallfrown \hat{\Lambda}$. Note that $\h({\tt T}_1 + {\tt T}_2) = {\sf max} \{\h({\tt T}_1), \h({\tt T}_2)\}$ for ${\tt T}_1,{\tt T}_2 \in \MT$.

A standard numbering of the nodes of the tree by strings of positive integers allows us to refer to positions in a tree. Specifically, the set of positions of a tree includes the root position, defined as the empty string, and the positions from its children which are obtained by appending the order of each child to its positions. 

\begin{definition}(Set of positions)
The set of positions of a modal tree ${\tt T} = \la \Delta ; \Gamma \ra$, denoted by ${\sf Pos}({\tt T}) \in \mathcal{P}(\mathbb{N}^*)$, is inductively defined as 
\begin{itemize}
    \item ${\sf Pos}(\la \Delta ; \varnothing \ra) := \{\epsilon\}$ for $\epsilon \in \mathbb{N}^*$ the empty string,
    \item ${\sf Pos}(\la \Delta ; \Gamma \ra) := \{ \epsilon \} \cup \bigcup\limits_{i = 1}^{n} \{i\mathbf{k} | \mathbf{k} \in {\sf Pos}({\tt S}_i)\}$ for $\Gamma = [(\alpha_1,{\tt S}_1), ... , (\alpha_n,{\tt S}_n)]$. 
\end{itemize}
\end{definition}

Using the precise position apparatus we can define derived notions like, for example, that of subtree.

\begin{definition}(Subtree)
The subtree of ${\tt T} \in \MT$ at a position $\mathbf{k} \in {\sf Pos}({\tt T})$, denoted by ${\tt T}|_{\mathbf{k}}$, is inductively defined over the length of $\mathbf{k}$ as
\begin{itemize}
    \item ${\tt T}|_{\epsilon} := {\tt T}$,
    \item ${\tt T}|_{i\mathbf{r}} := {\tt S}_i|_\mathbf{r}$ for $1 \leq i \leq n$ such that ${\tt T} = \la \Delta ; [(\alpha_1,{\tt S}_1), ... , (\alpha_n,{\tt S}_n)] \ra$.
\end{itemize}
\end{definition}

We can now define subtree replacement based on the precise positioning provided.

\begin{definition}(Replacement)
Let ${\tt T},{\tt S} \in \MT$ and $\mathbf{k} \in {\sf Pos}({\tt T})$. The \textit{tree obtained from ${\tt T}$ by replacing the subtree at position $\mathbf{k}$ by ${\tt S}$}, denoted by ${\tt T}[{\tt S}]_\mathbf{k}$, is inductively defined over the length of $\mathbf{k}$ as
\begin{itemize}
    \item ${\tt T}[{\tt S}]_\epsilon := {\tt S}$,
    \item ${\tt T}[{\tt S}]_{i\mathbf{r}} := \la \Delta ; [(\alpha_1,{\tt S}_1),...,(\alpha_i,{\tt S}_i[{\tt S}]_\mathbf{r}),...,(\alpha_n,{\tt S}_n)]\ra$ for $1 \leq i \leq n$ and \\${\tt T} = \la \Delta ; [(\alpha_1,{\tt S}_1),...,(\alpha_n,{\tt S}_n)] \ra$.
\end{itemize}
\end{definition}

Here below we present useful results on positioning and replacement in a modal tree.

\begin{lemma}
\label{LemmaSubtreeReplacement}
Let ${\tt T},{\tt S}, \hat{\tt S} \in \MT$ be modal trees. Then, for $\mathbf{k}$ and $\mathbf{r}$ belonging to the adequate position sets we have
\begin{enumerate}
    \item $({\tt T}|_\mathbf{k})|_\mathbf{r} = {\tt T}|_\mathbf{kr}$;
    \item ${\tt T} [{\tt T}|_\mathbf{k}]_\mathbf{k} = {\tt T}$;
    \item $({\tt T} [{\tt S}]_\mathbf{k})|_\mathbf{k} = {\tt S}$;
    \item $({\tt T}[\hat{\tt S}]_\mathbf{k})[{\tt S}]_\mathbf{k} = {\tt T}[{\tt S}]_\mathbf{k}$ \hspace{0.2cm}\textit{(transitivity of replacement)};
    \item $({\tt T}[\hat{\tt S}]_\mathbf{k})[{\tt S}]_\mathbf{kr} = {\tt T}[\hat{\tt S}[{\tt S}]_\mathbf{r}]_\mathbf{k}$.
\end{enumerate}
\end{lemma}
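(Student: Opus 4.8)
Each of the five items is a statement about modal trees built by a single structural recursion, so the natural strategy is induction on the length of the position string (equivalently, structural induction on the tree guided by the position), exactly as the definitions of $|_\mathbf{k}$ and $[\,\cdot\,]_\mathbf{k}$ are set up. I would handle the items roughly in the stated order, since later items can reuse earlier ones. For (1), I induct on $|\mathbf{k}|$: if $\mathbf{k}=\epsilon$ both sides are $\mathbf{T}|_\mathbf{r}$ by the base clause of subtree; if $\mathbf{k}=i\mathbf{k}'$ with $\mathbf{T}=\la\Delta;[(\alpha_1,\mathbf{S}_1),\dots,(\alpha_n,\mathbf{S}_n)]\ra$, then $(\mathbf{T}|_{i\mathbf{k}'})|_\mathbf{r}=(\mathbf{S}_i|_{\mathbf{k}'})|_\mathbf{r}$, which by the induction hypothesis equals $\mathbf{S}_i|_{\mathbf{k}'\mathbf{r}}=\mathbf{T}|_{i\mathbf{k}'\mathbf{r}}$, using associativity of string concatenation so that $i(\mathbf{k}'\mathbf{r})=(i\mathbf{k}')\mathbf{r}$.

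For (2) and (3), the same split works. In (3), $\mathbf{k}=\epsilon$ gives $(\mathbf{T}[\mathbf{S}]_\epsilon)|_\epsilon=\mathbf{S}$ directly; for $\mathbf{k}=i\mathbf{r}$ one computes $(\mathbf{T}[\mathbf{S}]_{i\mathbf{r}})|_{i\mathbf{r}}=(\mathbf{S}_i[\mathbf{S}]_\mathbf{r})|_\mathbf{r}$ by the second clauses of replacement and subtree, and this is $\mathbf{S}$ by the induction hypothesis. Item (2) is analogous, with the key observation that $\mathbf{T}[\mathbf{T}|_{i\mathbf{r}}]_{i\mathbf{r}}$ rebuilds the list with $i$-th entry $\mathbf{S}_i[\mathbf{S}_i|_\mathbf{r}]_\mathbf{r}=\mathbf{S}_i$ (induction hypothesis), leaving $\mathbf{T}$ unchanged. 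For (4), induction on $|\mathbf{k}|$: at $\epsilon$ both sides are $\mathbf{S}$; at $i\mathbf{r}$ the outer replacement only touches the $i$-th child, reducing the claim to $(\mathbf{S}_i[\hat{\mathbf{S}}]_\mathbf{r})[\mathbf{S}]_\mathbf{r}=\mathbf{S}_i[\mathbf{S}]_\mathbf{r}$, which is the induction hypothesis. For (5), I again induct on $|\mathbf{k}|$: if $\mathbf{k}=\epsilon$ the left side is $\mathbf{T}[\hat{\mathbf{S}}]_\epsilon$ then replace at position $\mathbf{r}$, i.e. $\hat{\mathbf{S}}[\mathbf{S}]_\mathbf{r}$, matching the right side $\mathbf{T}[\hat{\mathbf{S}}[\mathbf{S}]_\mathbf{r}]_\epsilon$; if $\mathbf{k}=i\mathbf{k}'$, both sides restrict to modifying the $i$-th child, and the claim follows from the induction hypothesis applied to $\mathbf{S}_i$ with positions $\mathbf{k}'$ and $\mathbf{r}$, again using associativity $i(\mathbf{k}'\mathbf{r})=(i\mathbf{k}')\mathbf{r}$.

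The only genuinely delicate point — and the place to be careful rather than the place where anything deep happens — is bookkeeping about which positions are legal: in the inductive step one must know that $i\mathbf{k}'\in{\sf Pos}(\mathbf{T})$ forces $\mathbf{k}'\in{\sf Pos}(\mathbf{S}_i)$ (immediate from the definition of ${\sf Pos}$), and similarly that after a replacement the relevant positions still belong to the new tree's position set, so that the nested expressions are well-formed. Once that is in place, every case is a one-line unfolding of the recursive definitions together with the induction hypothesis, so I would state the positional side-conditions explicitly at the start and then present the five inductions compactly. I expect no real obstacle beyond this hygiene; the lemma is essentially the standard term-rewriting facts about positions specialised to \MT, and the inductive type presentation makes each step a direct computation.
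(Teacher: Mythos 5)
Your proposal is correct and follows essentially the same route as the paper: an induction that peels one index off the position (equivalently, structural induction on the tree) with a case split on $\mathbf{k}=\epsilon$ versus $\mathbf{k}=i\hat{\mathbf{k}}$, after which each item reduces to unfolding the definitions of $|_\mathbf{k}$ and $[\cdot]_\mathbf{k}$ on the $i$-th child and invoking the inductive hypothesis. The case computations you sketch coincide with those in the paper's proof, and your remark about the positional side-conditions is the right bit of hygiene to make explicit.
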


\begin{proof}
We proceed by induction on the tree structure of ${\tt T}$ for each statement. If ${\tt T}$ is a leaf, the results follow easily since $\mathbf{k} = \epsilon$. Otherwise, we continue by cases on the length of $\mathbf{k}$. For $\epsilon$ the statements trivially hold. Finally consider $i\mathbf{\hat{k}}$ for $1 \leq i \leq n$ and $\mathbf{\hat{k}} \in {\sf Pos}({\tt S}_i)$ such that ${\tt T} = \la \Delta ; [(\alpha_1,{\tt S}_1),...,(\alpha_n,{\tt S}_n)] \ra$. Then, by definition and each statement's inductive hypothesis for ${\tt S}_i$, we conclude
\begin{description}
\item[$1.$] $({\tt T}|_{i\mathbf{\hat{k}}})|_\mathbf{r} = ({\tt S}_i|_\mathbf{\hat{k}})|_\mathbf{r} = {\tt S}_i|_\mathbf{\hat{k}r} = {\tt T}|_{i\mathbf{\hat{k}r}}$;
\item[$2.$] ${\tt T} [{\tt T}|_{i\mathbf{\hat{k}}}]_{i\mathbf{\hat{k}}} = {\tt T} [{\tt S}_i|_{\mathbf{\hat{k}}}]_{i\mathbf{\hat{k}}} = \la \Delta ; [(\alpha_1,{\tt S}_1),...,(\alpha_i,{\tt S}_i [{\tt S}_i|_{\mathbf{\hat{k}}}]_{\mathbf{\hat{k}}}),...,(\alpha_n,{\tt S}_n)] \ra = {\tt T}$;
\item[$3.$] $({\tt T} [{\tt S}]_{i\mathbf{\hat{k}}})|_{i\mathbf{\hat{k}}} = (\la \Delta ; [(\alpha_1,{\tt S}_1),...,(\alpha_i,{\tt S}_i[{\tt S}]_\mathbf{\hat{k}}),...,(\alpha_n,{\tt S}_n)]\ra)|_{i\mathbf{\hat{k}}} = ({\tt S}_i[{\tt S}]_\mathbf{\hat{k}})|_\mathbf{\hat{k}} = {\tt S}$;
\item[$4.$]
\begin{equation*}
    \begin{split}
    ({\tt T}[\hat{\tt S}]_{i\mathbf{\hat{k}}})[{\tt S}]_{i\mathbf{\hat{k}}} & = (\la \Delta ; [(\alpha_1,{\tt S}_1),...,(\alpha_i,{\tt S}_i[\hat{\tt S}]_\mathbf{\hat{k}}),...,(\alpha_n,{\tt S}_n)] \ra)[{\tt S}]_{i\mathbf{\hat{k}}} \\
    & = \la \Delta ; [(\alpha_1,{\tt S}_1),...,(\alpha_i,({\tt S}_i[\hat{\tt S}]_\mathbf{\hat{k}})[{\tt S}]_\mathbf{\hat{k}}),...,(\alpha_n,{\tt S}_n)] \ra \\
    &  = \la \Delta ; [(\alpha_1,{\tt S}_1),...,(\alpha_i,{\tt S}_i[{\tt S}]_\mathbf{\hat{k}}),...,(\alpha_n,{\tt S}_n)] \ra = {\tt T}[{\tt S}]_{i\mathbf{\hat{k}}};
    \end{split}  
    \end{equation*}
\item[$5.$] \begin{equation*}
    \begin{split}
    ({\tt T}[\hat{\tt S}]_{i\mathbf{\hat{k}}})[{\tt S}]_{i\mathbf{\hat{k}r}} & = (\la \Delta ; [(\alpha_1,{\tt S}_1),...,(\alpha_i,{\tt S}_i[\hat{\tt S}]_\mathbf{\hat{k}}),...,(\alpha_n,{\tt S}_n)] \ra)[{\tt S}]_{i\mathbf{\hat{k}r}} \\
    & = \la \Delta ; [(\alpha_1,{\tt S}_1),...,(\alpha_i,({\tt S}_i[\hat{\tt S}]_\mathbf{\hat{k}})[{\tt S}]_\mathbf{\hat{k}r}),...,(\alpha_n,{\tt S}_n)] \ra \\
    &  = \la \Delta ; [(\alpha_1,{\tt S}_1),...,(\alpha_i,{\tt S}_i[\hat{\tt S}[{\tt S}]_\mathbf{r}]_\mathbf{\hat{k}}),...,(\alpha_n,{\tt S}_n)] \ra = {\tt T}[\hat{\tt S}[{\tt S}]_\mathbf{r}]_{i\mathbf{\hat{k}}}. 
    \end{split}  
    \end{equation*}
\end{description}
\end{proof}

\section{Relating formulas and modal trees}
\label{sec:Embeddings} 

Our tree rewriting system is based on a correspondence between the language of $\mathcal{L}^+$ and $\MT$. Thereby we can ensure that transformations within the structure of modal trees accurately simulate derivations in a considered proof system. For this purpose, we introduce the tree embedding operator $\T$ inductively defined over the set of strictly positive formulas mapping them to modal trees. This definition is inspired by the canonical tree representation of strictly positive formulae presented by Beklemishev (see \cite{Beklemishev2014Positive}) as a combinatiorial tool for proving the polytime decidability of \RC. Additionally, we define $\F$ mapping modal trees to formulas. Ultimately, we prove that composition $\T \circ \F$ serves as the identity over \MT, while $\F \circ \T$ acts as the identity on $\mathcal{L}^+$ modulo equality for $\K +$.

\begin{definition}($\T$)
The modal tree embedding is the function $\T : \mathcal{L}^{+} \longrightarrow \MT$ inductively defined over the structure of strictly positive formulae as 
\begin{itemize} 
    \item $\T(\top) := \la \varnothing;\varnothing \ra$,
    \item $\T (p) := \la [p]; \varnothing\ra$ for $p \in {\sf Prop}$,
    \item $\T (\la \alpha \ra \varphi) := \la \varnothing; [( \alpha, \T (\varphi))] \ra$ for $\varphi \in \mathcal{L}^{+}$,
    \item $\T (\varphi \wedge \psi) := \T (\varphi) + \T (\psi)$ for $\varphi, \psi \in \mathcal{L}^{+}$.
\end{itemize}
\end{definition}

\begin{figure}[H]
\centering
    \begin{subfigure}{.1\textwidth}
    \centering
    \begin{tikzpicture}     
    \node[state][label=right:{\small ($\varnothing$)}] (a) at (0,0) {};     
    \end{tikzpicture} 
    \caption{\small $\T (\top)$}
    \end{subfigure}
\hfill
    \begin{subfigure}{.1\textwidth}
    \centering
    \begin{tikzpicture}     
    \node[state][label=right:{\small $[p]$}] (a) at (0,0) {};      
    \end{tikzpicture} 
    \caption{\small $\T (p)$}
    \end{subfigure}
\hfill
    \begin{subfigure}{.3\textwidth}
    \centering
    \begin{tikzpicture}       
    \node[state] (a) at (0,0) {};  
    \node[state][label=right:{\small $\Delta_\varphi$}] (b) [below =of a] {}; 
    \node[emptystate] (c) [below =of b] {\small $\T (\varphi)$};
    \draw[dashed,-] (b) edge[dashed] (c) {};
    \draw[-latex] (a) -- (b) node[fill=white,inner sep=2pt,midway] {{\small $\alpha$}};  
    \end{tikzpicture} 
    \caption{{\small $\T (\la \alpha \ra \varphi)$}}
    \end{subfigure}
\hfill
\vspace{0.5cm}
    \begin{subfigure}{.3\textwidth}
    \centering
    \begin{tikzpicture}        
    \node[state][label=right:{\small $\Delta_{\varphi} \smallfrown \Delta_{\psi}$}] (a) at (0,0) {} ;
    \node[emptystate] (b) [below left =of a] {\small $\T (\varphi)$};
    \node[emptystate] (c) [below right =of a] {\small $\T (\psi)$};
    \draw[dashed,-] (a) edge[dashed] (b) ; 
    \draw[dashed,-] (a) edge[dashed] (c) ;
    \end{tikzpicture} 
    \caption{\small $\T (\varphi \wedge \psi)$}
    \end{subfigure}
\caption{Modal tree embedding}
\end{figure}
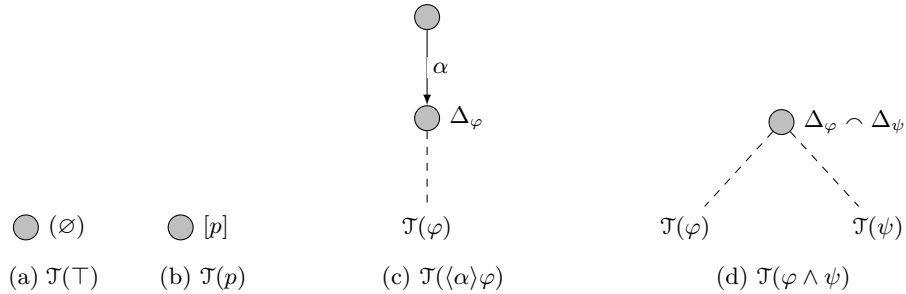

The modal depth of a formula coincides with the height of the modal tree which it is mapped to.

\begin{lemma}
\label{DepthMod}
    $\h(\T(\varphi)) = \md(\varphi)$ for $\varphi \in \mathcal{L}^{+}$.
\end{lemma}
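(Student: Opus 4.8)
The plan is to prove $\h(\T(\varphi)) = \md(\varphi)$ by structural induction on $\varphi \in \mathcal{L}^+$, following exactly the four clauses in the definitions of $\T$ and $\md$. The two base cases are immediate: for $\varphi = \top$ we have $\T(\top) = \la \varnothing; \varnothing \ra$, which is a leaf, so $\h(\T(\top)) = 0 = \md(\top)$; and for $\varphi = p$ with $p \in {\sf Prop}$ we have $\T(p) = \la [p]; \varnothing \ra$, again a leaf, so $\h(\T(p)) = 0 = \md(p)$.

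For the modal case $\varphi = \la \alpha \ra \psi$, unfolding the definition gives $\T(\la \alpha \ra \psi) = \la \varnothing; [(\alpha, \T(\psi))] \ra$, whose list of children is the singleton $[\T(\psi)]$. Hence by the definition of height, $\h(\T(\la \alpha \ra \psi)) = {\sf max}[\h(\T(\psi))] + 1 = \h(\T(\psi)) + 1$, which by the induction hypothesis equals $\md(\psi) + 1 = \md(\la \alpha \ra \psi)$. For the conjunction case $\varphi = \psi_1 \wedge \psi_2$, we have $\T(\psi_1 \wedge \psi_2) = \T(\psi_1) + \T(\psi_2)$, and I would invoke the remark recorded just after the definition of $\sum$, namely that $\h({\tt T}_1 + {\tt T}_2) = {\sf max}\{\h({\tt T}_1), \h({\tt T}_2)\}$. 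Thus $\h(\T(\psi_1 \wedge \psi_2)) = {\sf max}\{\h(\T(\psi_1)), \h(\T(\psi_2))\}$, and applying the induction hypothesis to each conjunct yields ${\sf max}\{\md(\psi_1), \md(\psi_2)\} = \md(\psi_1 \wedge \psi_2)$.

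There is no real obstacle here; the proof is a routine structural induction whose only slightly non-trivial ingredient is the height-of-sum identity, and even that is already stated in the text. The one point worth a moment's care is the conjunction case when one or both of $\T(\psi_i)$ happen to be leaves (so that the sum may collapse in the list of children), but the cited identity $\h({\tt T}_1 + {\tt T}_2) = {\sf max}\{\h({\tt T}_1),\h({\tt T}_2)\}$ already handles this uniformly, and it matches the ${\sf max}$ in the definition of $\md$ for conjunctions. I would therefore present the argument compactly, listing the four cases with the computations above and appealing to Lemma-level facts already in place.
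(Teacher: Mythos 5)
Your proof is correct and follows exactly the route the paper intends: the paper's own proof is just ``by an easy induction on the structure of $\varphi$,'' and your case analysis, including the appeal to the identity $\h({\tt T}_1 + {\tt T}_2) = {\sf max}\{\h({\tt T}_1),\h({\tt T}_2)\}$ for the conjunction case, is precisely the routine argument being elided.
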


\begin{proof}
    By an easy induction on the structure of $\varphi$.
\end{proof}

We also introduce a corresponding embedding in the opposite direction.

\begin{definition}($\F$)
The strictly positive formulae embedding is the function $\F : \MT \longrightarrow \mathcal{L}^{+}$ defined as 
\begin{equation*}
    \F(\la \Delta ; \Gamma \ra) := \bigwedge \Delta \wedge \bigwedge [\la \alpha \ra \F({\tt S}) | (\alpha,{\tt S}) \in \Gamma].
\end{equation*}
\end{definition}

For the sake of readability, we will simply write $\Diamond \Gamma$ to denote $[\la \alpha \ra \F({\tt S}) | (\alpha,{\tt S}) \in \Gamma]$. 

We conclude this section by providing a relation among strictly positive formulas and modal trees through the composition of the embeddings. 

\begin{proposition}[Embedding composition]
\label{PropInvEmbedding}
    $\T \circ \F = id_{\MT}$ and $\F \circ \T = id_{\mathcal{L}^{+} / \equiv_{\K^+}}$.
\end{proposition}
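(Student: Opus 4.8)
The plan is to prove the two identities separately, each by a structural induction. For $\T \circ \F = id_{\MT}$ I would induct on the structure of a modal tree ${\tt T} = \la \Delta ; \Gamma \ra$, that is, on $\h({\tt T})$ together with a secondary induction on the length of $\Gamma$. The key technical point is that $\T$ is not a homomorphism for concatenation on the nose, but it behaves well up to the sum operation: a direct computation from the definitions gives $\T(\varphi \wedge \psi) = \T(\varphi) + \T(\psi)$ by definition, and one checks the auxiliary fact that $\T(\bigwedge L) = \sum [\T(\chi) \mid \chi \in L]$ for any finite list $L$ of strictly positive formulas (again by induction on $|L|$, using associativity of $\smallfrown$ and the definition of $\sum$). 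Unfolding $\F(\la \Delta ; \Gamma \ra) = \bigwedge \Delta \wedge \bigwedge \Diamond\Gamma$ and applying $\T$, this auxiliary fact lets me rewrite $\T(\F(\la\Delta;\Gamma\ra))$ as $\bigl(\sum[\T(p) \mid p \in \Delta]\bigr) + \bigl(\sum[\T(\la\alpha\ra\F({\tt S})) \mid (\alpha,{\tt S})\in\Gamma]\bigr)$. The first summand collapses to $\la \Delta ; \varnothing \ra$ since each $\T(p) = \la[p];\varnothing\ra$ and sums concatenate roots; the second collapses to $\la \varnothing ; [(\alpha, \T(\F({\tt S}))) \mid (\alpha,{\tt S})\in\Gamma]\ra$ since $\T(\la\alpha\ra\F({\tt S})) = \la\varnothing;[(\alpha,\T(\F({\tt S})))]\ra$; and the induction hypothesis applied to each child ${\tt S} \in \Gamma$ (which has strictly smaller height) replaces $\T(\F({\tt S}))$ by ${\tt S}$. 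Adding the two summands recovers $\la \Delta ; \Gamma \ra$ exactly.

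For $\F \circ \T = id_{\mathcal{L}^+ / \equiv_{\K^+}}$ I would induct on the structure of $\varphi$, showing $\F(\T(\varphi)) \equiv_{\K^+} \varphi$. The cases $\top$ and $p$ are immediate from the definitions (noting $\bigwedge\varnothing = \top$ and $\top \wedge \top \equiv_{\K^+} \top$, etc.). For $\la\alpha\ra\varphi$ we have $\F(\T(\la\alpha\ra\varphi)) = \F(\la\varnothing;[(\alpha,\T(\varphi))]\ra) = \top \wedge (\la\alpha\ra\F(\T(\varphi)) \wedge \top)$, which is $\equiv_{\K^+} \la\alpha\ra\F(\T(\varphi))$, and by the induction hypothesis together with the distribution rule this is $\equiv_{\K^+} \la\alpha\ra\varphi$. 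For $\varphi \wedge \psi$ we have $\F(\T(\varphi\wedge\psi)) = \F(\T(\varphi) + \T(\psi))$, so here I need the lemma that $\F({\tt T}_1 + {\tt T}_2) \equiv_{\K^+} \F({\tt T}_1) \wedge \F({\tt T}_2)$; this follows from the definition of $+$ and the already-noted fact that $\bigwedge(\Pi_1 \smallfrown \Pi_2) \equiv_{\K^+} \bigwedge\Pi_1 \wedge \bigwedge\Pi_2$, applied to both the root lists and the diamond lists, plus commutativity/associativity of $\wedge$ in $\K^+$. Then the induction hypothesis on $\varphi$ and $\psi$ closes the case.

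The main obstacle, and the place that needs the most care, is the bookkeeping around the fact that $\F$ produces formulas with lots of spurious $\top$'s and a fixed left-to-right conjunction bracketing, whereas the original formula $\varphi$ has arbitrary bracketing — so the second identity genuinely holds only modulo $\equiv_{\K^+}$, and I must be scrupulous about invoking only $\K^+$-provable equivalences (associativity, commutativity, idempotence of $\wedge$ and unit laws for $\top$, all derivable from elimination/introduction of conjunction and the axiom $\varphi\vdash_{\K^+}\top$, together with distribution for the modal case). By contrast the first identity is a strict equality of trees and requires no logic at all, only the list-algebraic identities; the only subtlety there is setting up the induction so that the appeal to the induction hypothesis is to children of strictly smaller height, which the secondary induction on $|\Gamma|$ handles cleanly. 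I would state the two auxiliary lemmas ($\T(\bigwedge L) = \sum[\T(\chi)\mid\chi\in L]$ and $\F({\tt T}_1+{\tt T}_2)\equiv_{\K^+}\F({\tt T}_1)\wedge\F({\tt T}_2)$) explicitly before the main argument, as each is a one-line induction and isolating them keeps the proof of the proposition short.
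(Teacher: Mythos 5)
Your proposal is correct and follows essentially the same route as the paper: structural induction on the tree for $\T\circ\F$ and on the formula for $\F\circ\T$, with the sum/concatenation compatibility of the embeddings doing the work in the composite cases. The only difference is presentational — you isolate as explicit lemmas the facts $\T(\bigwedge L)=\sum[\T(\chi)\mid\chi\in L]$ and $\F({\tt T}_1+{\tt T}_2)\equiv_{\K^+}\F({\tt T}_1)\wedge\F({\tt T}_2)$, which the paper uses implicitly inside its equation chains.
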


\begin{proof}
We firstly prove $\T \circ \F ({\tt T})= {\tt T}$ for ${\tt T} \in \MT$ by induction on the modal tree structure. The leaf case follows by definition since $\T (\F (\la\Delta; \varnothing \ra)) = \T(\bigwedge\Delta \wedge \top) = \la \Delta; \varnothing \ra$. Otherwise, assuming $\T(\F({\tt S})) = {\tt S}$ for every ${\tt S} \in \Gamma$, we conclude
\begin{equation*}
\begin{split}
\T(\F(\la \Delta; \Gamma\ra)) & = \T(\bigwedge\Delta \wedge \bigwedge \Diamond \Gamma) = \T(\bigwedge \Delta) + \T (\bigwedge \Diamond \Gamma) \\
 & = \la \Delta; \varnothing \ra + \sum [\T(\la \alpha \ra \F({\tt S})) | (\alpha,{\tt S}) \in \Gamma] \\
 &  = \la \Delta;\varnothing\ra + \sum [\la \varnothing; [( \alpha,\T(\F({\tt S})))] \ra | (\alpha,{\tt S}) \in \Gamma] \\
 & = \la \Delta ; \varnothing\ra + \sum [\la \varnothing; [(\alpha,{\tt S})] \ra | (\alpha,{\tt S}) \in \Gamma] = \la \Delta ; \Gamma\ra.
\end{split}
\end{equation*}

Finally, we prove $\F \circ \T (\varphi) \equiv_{\K^+} \varphi$ for $\varphi \in \mathcal{L}^{+}$ by induction on the structure of $\varphi$. 
\begin{itemize}
    \item $\F \circ \T (\top) = \top \wedge \top \equiv_{\K^+} \top$; $\F \circ \T (p) = (p \wedge \top) \wedge \top \equiv_{\K^+} p$.
    \item Let us assume $\F \circ \T (\psi)  \equiv_{\K^+} \psi$. Then,
    \begin{equation*}
        \F \circ \T (\la \alpha \ra \psi) = \F (\la \varnothing ; [(\alpha , \T (\psi))] \ra = \top \wedge \la \alpha \ra \F \circ \T (\psi) \wedge \top \equiv_{\K^+} \la \alpha \ra \psi.
    \end{equation*}
    \item Let $\T (\psi) = \la \Delta_\psi ; \Gamma_\psi \ra$ and $\T (\phi) = \la \Delta_\phi ; \Gamma_\phi \ra$. Assuming $\F \circ \T (\psi) \equiv_{\K^+} \psi$ and $\F \circ \T (\phi) \equiv_{\K^+} \phi$,
    \begin{equation*}
        \F \circ \T (\psi \wedge \phi) \equiv_{\K^+}
         \bigwedge \Delta_\psi \wedge \bigwedge \Diamond \Gamma_\psi \wedge \bigwedge \Delta_\phi \wedge \bigwedge \Diamond \Gamma_\phi \equiv_{\K^+} \psi \wedge \phi.
    \end{equation*}
\end{itemize}
\end{proof}

\section{The tree rewriting system for \RC}

We introduce the tree rewriting system for $\RC$, an abstract rewriting system for \MT which will be proven adequate and complete w.r.t. \RC in the next section. Additionally, we present useful results for the rewriting, along with the Inside Rewriting Property which involves transforming a subtree while preserving the remaining parts invariant.

An \textit{abstract rewriting system} is a pair $(A, \{\hookrightarrow^\mu\}_{\mu \in I})$ consisting of a set $A$ and a sequence of binary relations $\hookrightarrow^\mu$ on $A$, also called rewriting rules (r.r.). Instead of $(a,b) \in {\hookrightarrow^\mu}$ we write $a \hookrightarrow^\mu b$ and call \textit{$b$ is obtained by applying $\mu$ to $a$} or \textit{$b$ is obtained by performing one step $\mu$ to $a$}. The composition of rewriting rules $\mu_1$ and $\mu_2$ is written $a \hookrightarrow^{\mu_1} \circ \hookrightarrow^{\mu_2} b$ and denotes that there is $\hat{a} \in A$ such that $a \hookrightarrow^{\mu_1} \hat{a} \hookrightarrow^{\mu_2} b$.

In particular, the rewriting rules of the tree rewriting system for \RC transform a tree by replacing a subtree with a predetermined tree. The rules are classified into five kinds according to the performed transformation: atomic, structural, replicative, decreasing, and modal rewriting rules. Atomic rules duplicate or eliminate propositional variables in the lists labeling the nodes; the structural rule permutes the order of a node's children; the replicative rule duplicates a child of a node; decreasing rules either eliminate a child or remove a node and its children under certain conditions; and modal rules either decrease the label of an edge or apply a transformation simulating the $J$ axiom of $\RC$. 

To define the rewriting rules, we introduce the following notation. Let ${\tt T} = \la \Delta ; \Gamma \ra$ be a modal tree, $0< i,j \leq |\Gamma|$ and $n \leq |\Delta|$ such that $\Gamma = [(\alpha_1,{\tt S}_1),...,(\alpha_m,{\tt S}_m)]$. The $i$th element of $\Gamma$ is denoted by $\#_i \Gamma$. The list obtained by erasing the $i$th element of $\Gamma$, i.e. $[(\alpha_1,{\tt S}_1),...,(\alpha_{i-1},{\tt S}_{i-1}),(\alpha_{i+1},{\tt S}_{i+1})$\\$,...,(\alpha_m,{\tt S}_m)]$, is denoted by $\Gamma^{-i}$. The list obtained by duplicating the $i$th element of $\Gamma$ and placing it at the beginning, i.e. $(\alpha_i,{\tt S}_i) \smallfrown \Gamma$, is denoted by $\Gamma^{+i}$. Analogously, the list obtained by erasing the $n$th element of $\Delta$ and the list obtained by duplicating the $n$th element of $\Delta$ and placing it at the beginning are denoted by $\Delta^{-n}$ and $\Delta^{+n}$, respectively. The \textit{list obtained from $\Gamma$ by replacing its $i$th element} by $(\alpha,{\tt S})$ is defined by $\Gamma[(\alpha,{\tt S})]_i := [(\alpha_1,{\tt S}_1),...,(\alpha_{i-1},{\tt S}_{i-1}),(\alpha,{\tt S}),$\\$(\alpha_{i+1},{\tt S}_{i+1}), 
...,(\alpha_m,{\tt S}_m)]$. Note that we use the same notation for replacement in a list of pairs and replacement in a modal tree since the context allows for a clear distinction. Finally, the list obtained by interchanging the $i$th element with the $j$th element, i.e. $(\Gamma[\#_i \Gamma]_j)[\#_j \Gamma]_i$, is denoted by $\Gamma^{i \leftrightarrow j}$.

We can now present the tree rewriting system for Reflection Calculus.

\begin{definition}({\TRC})     
The \textit{tree rewriting system for \RC}, denoted by $\TRC$, is the abstract rewriting system $(\MT,\{\hookrightarrow^\mu\}_{\mu \in \mathscr{R}})$ for $\mathscr{R} = \{\rho^+,\rho^-,\sigma, \pi^+,\pi^-, \4, \lambda, \J\}$. The rewriting rules of $\mathscr{R}$ are defined in Table \ref{fig:rewrules}.
\end{definition}

\renewcommand{\arraystretch}{1.5}
\begin{table}
    \centering
\begin{tabularx}{1\textwidth}{| c | p{6.5cm} | c |}
\cline{1-3}
    Atomic r.r.  & \centering{${\tt T} \hookrightarrow^{\rho^+} {\tt T}[\la \Delta^{+i} ; \Gamma \ra]_\mathbf{k}$} & {\small $\rho^+$-rule} \\
    \cline{2-3}
    $0 < i \leq |\Delta|$ & \centering{${\tt T} \hookrightarrow^{\rho^-} {\tt T}[\la \Delta^{-i} ; \Gamma \ra]_\mathbf{k}$} & {\small $\rho^-$-rule} \\
\cline{1-3}
    Structural r.r. & \hfil\multirow{3}{*}{\centering ${\tt T} \hookrightarrow^{\sigma} {\tt T}[\la \Delta ; \Gamma^{i \leftrightarrow j} \ra]_\mathbf{k}$} & \multirow{3}{*}{{\small $\sigma$-rule}} \\
    $0 < i,j \leq |\Gamma|$, & & \\
    $i \neq j$ & & \\
\cline{1-3}
    Replicative r.r. & \hfil\multirow{2}{*}{\centering{${\tt T} \hookrightarrow^{\pi^+} {\tt T}[\la \Delta ; \Gamma^{+i} \ra]_\mathbf{k}$}} & \multirow{2}{*}{{\small $\pi^+$-rule}} \\
    $0 < i \leq |\Gamma|$ & & \\
\cline{1-3}
    \multirow{3}{*}{\shortstack[l]{Decreasing r.r. \\[5pt] \hfil $0 < i \leq |\Gamma|$ \\[5pt] \hfil $0 < j \leq |\tilde{\Gamma}|$}}  & \centering{${\tt T} \hookrightarrow^{\pi^-} {\tt T}[\la \Delta ; \Gamma^{-i} \ra]_\mathbf{k}$} & {\small $\pi^-$-rule} \\
    \cline{2-3}
    & \centering{${\tt T} \hookrightarrow^{\4} {\tt T}[\la \Delta ; \Gamma[(\beta, {\tt S})]_i \ra]_\mathbf{k}$} & \multirow{2}{*}{{\small $\4$-rule}} \\
    & \centering{for $\#_i \Gamma = (\beta, \la \tilde{\Delta} ; \tilde{\Gamma} \ra)$ and $\#_j \tilde{\Gamma}= (\beta , {\tt S})$} & \\
\cline{1-3}
    Modal r.r.  & \centering{${\tt T} \hookrightarrow^{\lambda} {\tt T}[\la\Delta; \Gamma[(\beta,{\tt S})]_i \ra]_\mathbf{k}$} & \multirow{2}{*}{{\small $\lambda$-rule}} \\
    $\alpha > \beta$ & \centering{for $\#_i \Gamma = (\alpha , {\tt S})$} & \\
    \cline{2-3}
    $0 < i,j \leq |\Gamma|$ & \centering{${\tt T} \hookrightarrow^{\J} {\tt T}[\la \Delta ; (\Gamma[(\alpha, \la \tilde{\Delta} ; \tilde{\Gamma} \smallfrown (\beta , {\tt S}) \ra)]_i)^{-j} \ra]_\mathbf{k}$} & \multirow{2}{*}{\small $\J$-rule} \\
    $i \neq j$ & \centering{for $\#_i \Gamma = (\alpha , \la \tilde{\Delta} ; \tilde{\Gamma} \ra)$ and $\#_j \Gamma = (\beta, {\tt S})$} & \\
\cline{1-3}
\end{tabularx}
    \caption{Rewriting rules of $\mathscr{R}$ for ${\tt T} \in \MT$, $\mathbf{k} \in {\sf Pos}({\tt T})$ and ${\tt T}|_\mathbf{k} = \la \Delta ; \Gamma \ra$.}
  \label{fig:rewrules}
\end{table}

Due to the transformations they induce, the rules are named as follows: the $\rho^+$-rule is called \textit{atom duplication}, the $\rho^-$-rule is called \textit{atom elimination}, the $\sigma$-rule is called \textit{child permutation}, the $\pi^+$-rule is called as \textit{child duplication}, the $\pi^-$-rule is called \textit{child elimination}, the $\4$-rule is called \textit{transitivity}, and the $\lambda$-rule is called \textit{monotonicity}. 

More generally, the tree rewriting relation is the union of the rewriting rules.

\begin{definition}($\hookrightarrow$)
The \textit{tree rewriting relation} $\hookrightarrow$ is defined as 
\begin{equation*}
\begin{split}
    \hookrightarrow & := \hspace{0.1cm}\hookrightarrow^{\rho^+} \cup \hookrightarrow^{\rho^-} \cup \hookrightarrow^{\sigma} \cup \hookrightarrow^{\pi^+} \cup \hookrightarrow^{\pi^-} \cup \hookrightarrow^{\4} \cup \hookrightarrow^{\lambda} \cup \hookrightarrow^{\J}.
\end{split}
\end{equation*} 
\end{definition}

We say that \textit{the step in} ${\tt T} \hookrightarrow {\tt T'}$ \textit{has been performed at position $\mathbf{k}$} if the applied rule replaces the subtree at position $\mathbf{k} \in {\sf Pos}({\tt T})$. We say that \textit{${\tt T}$ rewrites to ${\tt T'}$}, denoted by ${\tt T} \hookrightarrow^{*} {\tt T'}$, if ${\tt T'}$ is the result of applying zero, one or several rewriting rules of $\mathscr{R}$ to ${\tt T}$. In other words, $\hookrightarrow^{*}$ denotes the reflexive transitive closure of $\hookrightarrow$. The trees \textit{${\tt T}$ and ${\tt T'}$ are $\TRC$-equivalent}, denoted by ${\tt T} \overset{*}{\leftrightarrow} {\tt T'}$, if ${\tt T} \hookrightarrow^{*} {\tt T'}$ and ${\tt T'} \hookrightarrow^{*} {\tt T}$. If ${\tt T}$ rewrites to ${\tt T'}$ by applying the rewriting rule $\mu$ zero, one or several times, we write ${\tt T} \hookrightarrow^{\mu *} {\tt T'}$. For $\Omega$ a list of rewriting rules, we define ${\tt T} \hookrightarrow^{\Omega} {\tt S}$ inductively as ${\tt T} \hookrightarrow^{*} {\tt T}$ by applying no rewriting rules if $\Omega = \varnothing$, and ${\tt T} \hookrightarrow^{\mu} \circ \hookrightarrow^{\hat{\Omega}} {\tt S}$ for $\Omega = \mu \smallfrown \hat{\Omega}$. Likewise, for $\Omega_1$ and $\Omega_2$ lists of rewriting rules, ${\tt T} \hookrightarrow^{\Omega_1} \circ \hookrightarrow^{\Omega_2} {\tt S}$ denotes that there is $\hat{\tt S}$ such that ${\tt T} \hookrightarrow^{\Omega_1} \hat{\tt S} \hookrightarrow^{\Omega_2} {\tt S}$.

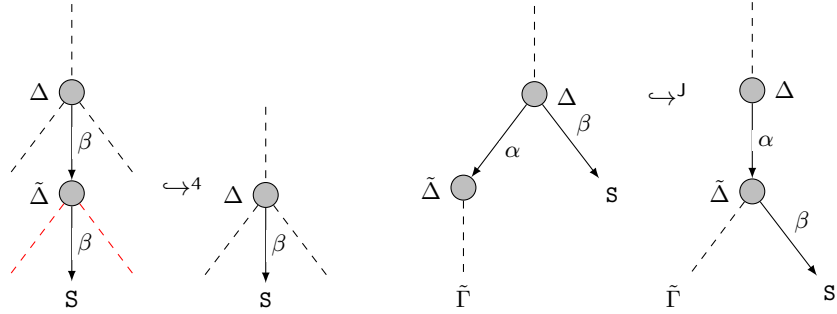
\begin{figure}[t]
\centering  
    \begin{subfigure}{0.4\textwidth} 
        \begin{subfigure}{0.4\textwidth}
            \begin{tikzpicture}   
            \node[emptystate] (a) at (0,0) {};
            \node[state][label=left: {\small $\Delta$}] (b) [below =of a] {};
            \node[emptystate][label=right:  $\hookrightarrow^{\4}$] (c) [below right =of b] {};
            \node[state][label=left: {\small $\tilde{\Delta}$}] (d) [below =of b] {};
            \node[emptystate] (e) [below right =of d] {};
            \node[emptystate] (f) [below =of d] {${\tt S}$};
            \node[emptystate] (g) [below left =of b] {};
            \node[emptystate] (h) [below left =of d] {};
            \draw[dashed,-] (a) edge[dashed] (b) {}; 
            \draw[dashed,-] (b) edge[dashed] (c) {};
            \draw[dashed,-] (b) edge[dashed] (g) {};
            \draw[red,dashed,-] (d) edge[dashed] (e) {}; 
            \draw[red,dashed,-] (d) edge[dashed] (h) {};
            \draw[-latex] (b) -- (d) node[fill=white,inner sep=2pt,midway] {\small $\beta$}; 
            \draw[-latex] (d) -- (f) node[fill=white,inner sep=2pt,midway] {\small $\beta$};
            \end{tikzpicture} 
        \end{subfigure}
        \hspace{0.4cm}
        \begin{subfigure}{0.4\textwidth}
            \begin{tikzpicture}   
            \node[emptystate] (a) at (0,0) {};
            \node[state][label=left: {\small $\Delta$}] (b) [below =of a] {};
            \node[emptystate] (c) [below right =of b] {};
            \node[emptystate] (d) [below =of b] {\small ${\tt S}$};
            \node[emptystate] (e) [below left =of b] {};
            \draw[dashed,-] (a) edge[dashed] (b) {}; 
            \draw[dashed,-] (b) edge[dashed] (c) {}; 
            \draw[dashed,-] (b) edge[dashed] (e) {};
            \draw[-latex] (b) -- (d) node[fill=white,inner sep=2pt,midway] {\small $\beta$}; 
            \end{tikzpicture}    
        \end{subfigure}
    \end{subfigure}
    \hspace{0.5cm}
    \begin{subfigure}{0.5\textwidth}
        \begin{subfigure}{0.4\textwidth}
            \begin{tikzpicture}  
            \node[emptystate] (a) at (0,0) {};
            \node[state][label=right: {\small $\Delta$} \hspace{0.7cm} $\hookrightarrow^{\J}$] (b) [below =of a] {};
            \node[state][label=left: {\small $\tilde{\Delta}$}] (c) [below left =of b] {};
            \node[emptystate] (d) [below right =of b] {\small ${\tt S}$};
            \node[emptystate] (e) [below =of c] {\small $\tilde{\Gamma}$};
            \draw[-latex] (b) -- (c) node[fill=white,inner sep=2pt,midway] {{\small $\alpha$}};
            \draw[-latex] (b) -- (d) node[fill=white,inner sep=2pt,midway] {{\small $\beta$}};
            \draw[dashed,-] (a) edge[dashed] (b);
            \draw[dashed,-] (c) edge[dashed] (e); 
            \end{tikzpicture}
        \end{subfigure}
        \hspace{0.6cm}
        \begin{subfigure}{0.3\textwidth}
            \centering
            \begin{tikzpicture} 
            \node[emptystate] (a) at (0,0) {};
            \node[state][label=right: {\small $\Delta$}] (b) [below =of a] {};
            \node[state][label=left: {\small $\tilde{\Delta}$}] (c) [below =of b] {};
            \node[emptystate](d) [below right =of c] {\small ${\tt S}$};
            \node[emptystate] (e) [below left =of c] {\small $\tilde{\Gamma}$};
            \draw[-latex] (b) -- (c) node[fill=white,inner sep=2pt,midway] {{\small $\alpha$}};
            \draw[-latex] (c) -- (d) node[fill=white,inner sep=2pt,midway] {{\small $\beta$}};
            \draw[dashed,-] (a) edge[dashed] (b);
            \draw[dashed,-] (c) edge[dashed] (e); 
            \end{tikzpicture}
        \end{subfigure} 
    \end{subfigure}
\caption{$\4$-rule and $\J$-rule.}
\label{fig:4Jrules}
\end{figure} 

Modal trees with permuted lists labeling the nodes are $\TRC$-equivalent. 

\begin{lemma}
\label{PermutationNode}
$\la \Delta_1 \smallfrown \Delta_2 ; \Gamma \ra \overset{*}{\leftrightarrow} \la \Delta_2 \smallfrown \Delta_1 ; \Gamma \ra$.
\end{lemma}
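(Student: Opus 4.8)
The plan is to exploit that the statement is symmetric in $\Delta_1$ and $\Delta_2$, so that it suffices to establish just one of the two rewriting directions, say
\[
\la \Delta_1 \smallfrown \Delta_2 ; \Gamma \ra \hookrightarrow^{*} \la \Delta_2 \smallfrown \Delta_1 ; \Gamma \ra
\]
for all finite lists of propositional variables $\Delta_1,\Delta_2$ and all $\Gamma$; interchanging $\Delta_1$ and $\Delta_2$ then gives the reverse rewriting, hence $\TRC$-equivalence. Every step below is performed at the root position $\epsilon$, where ${\tt T}|_{\epsilon} = {\tt T}$ and ${\tt T}[{\tt S}]_{\epsilon} = {\tt S}$, so that an application of $\rho^{+}$ (resp. $\rho^{-}$) at $\epsilon$ sends $\la \Delta ; \Gamma \ra$ to $\la \Delta^{+i} ; \Gamma \ra$ (resp. $\la \Delta^{-i} ; \Gamma \ra$) and we may argue purely about the list labelling the root.

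First I would isolate the following single-element relocation move: for all finite lists $A,B$ of propositional variables and every $x \in {\sf Prop}$,
\[
\la A \smallfrown (x \smallfrown B) ; \Gamma \ra \ \hookrightarrow^{\rho^{+}} \circ \hookrightarrow^{\rho^{-}}\ \la x \smallfrown (A \smallfrown B) ; \Gamma \ra .
\]
To see this, note that in the list $\Delta := A \smallfrown (x \smallfrown B)$ the displayed occurrence of $x$ has index $i := |A| + 1$; applying $\rho^{+}$ with this index gives $\la x \smallfrown \Delta ; \Gamma \ra = \la x \smallfrown A \smallfrown (x \smallfrown B) ; \Gamma \ra$, in which the original occurrence of $x$ has been pushed to index $i+1 = |A|+2$; applying $\rho^{-}$ with index $|A|+2$ then removes exactly that occurrence, leaving $\la x \smallfrown (A \smallfrown B) ; \Gamma \ra$.

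Next I would prove, by induction on $|\Delta_2|$, the slightly stronger statement that $\la A \smallfrown \Delta_2 \smallfrown B ; \Gamma \ra \hookrightarrow^{*} \la \Delta_2 \smallfrown A \smallfrown B ; \Gamma \ra$ for all finite lists $A,B$; the generalisation is what makes the induction go through, since the naive version would leave a fixed prefix in the way. The case $\Delta_2 = \varnothing$ is immediate. For $\Delta_2 = \hat{\Delta}_2 \smallfrown [y]$, the relocation move (with $A \smallfrown \hat{\Delta}_2$, $y$, $B$ in the roles of $A$, $x$, $B$) rewrites $\la A \smallfrown \Delta_2 \smallfrown B ; \Gamma \ra$ to $\la [y] \smallfrown A \smallfrown \hat{\Delta}_2 \smallfrown B ; \Gamma \ra$, and the induction hypothesis applied with $[y] \smallfrown A$ in the role of $A$ rewrites this further to $\la \hat{\Delta}_2 \smallfrown [y] \smallfrown A \smallfrown B ; \Gamma \ra = \la \Delta_2 \smallfrown A \smallfrown B ; \Gamma \ra$; composing these two sub-derivations using transitivity of $\hookrightarrow^{*}$ closes the induction. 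Specialising to $A := \Delta_1$ and $B := \varnothing$ yields the desired direction, and the symmetry remark finishes the proof.

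I expect the only genuinely delicate point to be the index bookkeeping in the relocation move — checking that prepending a copy of $x$ via $\rho^{+}$ shifts the original occurrence from index $|A|+1$ to index $|A|+2$, so that the subsequent $\rho^{-}$ deletes precisely the intended copy. Everything else is routine manipulation of lists together with transitivity of $\hookrightarrow^{*}$.
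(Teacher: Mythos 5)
Your proof is correct and matches the paper's argument in all essentials: both reduce to a single rewriting direction by symmetry, induct on the length of $\Delta_2$, and realise the relocation of one propositional variable to the front of the root label by a $\rho^{+}$ duplication followed by a $\rho^{-}$ deletion of the original occurrence. The only differences are cosmetic bookkeeping — you peel $\Delta_2$ from the back and carry an explicit suffix $B$ through the induction, whereas the paper peels from the front and instead quantifies its induction hypothesis over the first list.
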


\begin{proof}
It suffices to show $\la \Delta_1 \smallfrown \Delta_2 ; \Gamma \ra \hookrightarrow^{*} \la \Delta_2 \smallfrown \Delta_1 ; \Gamma \ra$ by induction on the length of $\Delta_2$. If $\Delta_2$ is empty, the result trivially holds. Assuming $\la \Delta \smallfrown \Delta_2 ; \Gamma \ra \hookrightarrow^{*} \la \Delta_2 \smallfrown \Delta ; \Gamma \ra$ for any list of propositional variables $\Delta$, using atom duplication and atom elimination rewriting rules we conclude
\begin{equation*}
\begin{split}
\la \Delta_1 \smallfrown (p \smallfrown \Delta_2) ; \Gamma \ra & = \la (\Delta_1 \smallfrown p) \smallfrown \Delta_2 ; \Gamma \ra \hookrightarrow^{*} \la \Delta_2 \smallfrown (\Delta_1 \smallfrown p) ; \Gamma \ra \\
& \hookrightarrow^{\rho^{+}} \la p \smallfrown \Delta_2 \smallfrown \Delta_1 \smallfrown p ; \Gamma \ra \hookrightarrow^{\rho^-} \la (p \smallfrown \Delta_2) \smallfrown \Delta_1 ; \Gamma \ra.
\end{split}
\end{equation*}
\end{proof}

Here are some useful results on rewriting a sum of modal trees.

\begin{lemma}
\label{LemmaRewSum}
Let ${\tt T}_1,{\tt T}_2,{\tt T}_3,{\tt S}_1, {\tt S}_2 \in \MT$. Then,
\begin{enumerate}
    \item ${\tt T}_1 \overset{*}{\leftrightarrow} {\tt T}_1 + {\tt T}_1$;
    \item ${\tt T}_1 + {\tt T}_2 \overset{*}{\leftrightarrow} {\tt T}_2 + {\tt T}_1$;
    \item ${\tt T}_1 + {\tt T}_2 \hookrightarrow^{*} {\tt T}_1$ and ${\tt T}_1 + {\tt T}_2 \hookrightarrow^{*} {\tt T}_2$;
    \item If ${\tt T}_1 \hookrightarrow^{*} {\tt S}_1$, then ${\tt T}_1 + {\tt T}_2 \hookrightarrow^{*} {\tt S}_1 + {\tt T}_2$;
    \item If ${\tt T}_1 \hookrightarrow^{*} {\tt T}_2$ and ${\tt T}_1 \hookrightarrow^{*} {\tt T}_3$ then ${\tt T}_1 \hookrightarrow^{*} {\tt T}_2 + {\tt T}_3$;
    \item If ${\tt S}_1 \hookrightarrow^{*} {\tt T}_1$ and ${\tt S}_2 \hookrightarrow^{*} {\tt T}_2$, then ${\tt S}_1 + {\tt S}_2 \hookrightarrow^{*} {\tt T}_1 + {\tt T}_2$.
\end{enumerate}
\end{lemma}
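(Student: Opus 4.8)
Each of the six items asserts a closure property of the rewriting relation with respect to the sum operation, so the overall strategy is to reduce everything to the behaviour of $\hookrightarrow$ on roots and on the lists of children, exploiting that $+$ just concatenates these two lists. I would prove the items roughly in the order $2 \to 3 \to 1 \to 4 \to 6 \to 5$, since the later ones build on the earlier ones.

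\textbf{Items 1--3 (the basic identities).} For item~2, write ${\tt T}_1 = \la \Delta_1 ; \Gamma_1 \ra$ and ${\tt T}_2 = \la \Delta_2 ; \Gamma_2 \ra$, so that ${\tt T}_1 + {\tt T}_2 = \la \Delta_1 \smallfrown \Delta_2 ; \Gamma_1 \smallfrown \Gamma_2 \ra$. Lemma~\ref{PermutationNode} permutes the root to $\la \Delta_2 \smallfrown \Delta_1 ; \Gamma_1 \smallfrown \Gamma_2 \ra$; then a sequence of $\sigma$-steps (child permutation) at the root moves the block $\Gamma_2$ in front of $\Gamma_1$, by a straightforward induction on $|\Gamma_1|$ (or $|\Gamma_2|$), yielding $\la \Delta_2 \smallfrown \Delta_1 ; \Gamma_2 \smallfrown \Gamma_1 \ra = {\tt T}_2 + {\tt T}_1$; symmetry gives the converse direction. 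For item~3, from $\la \Delta_1 \smallfrown \Delta_2 ; \Gamma_1 \smallfrown \Gamma_2 \ra$ repeatedly apply $\rho^-$ to delete every variable of $\Delta_2$ and $\pi^-$ to delete every child in $\Gamma_2$, arriving at ${\tt T}_1$; the second clause follows by combining this with item~2. For item~1, ${\tt T}_1 + {\tt T}_1 \hookrightarrow^{*} {\tt T}_1$ is a special case of item~3, and ${\tt T}_1 \hookrightarrow^{*} {\tt T}_1 + {\tt T}_1$ comes from duplicating each variable of $\Delta_1$ with $\rho^+$ and each child of $\Gamma_1$ with $\pi^+$ and then reordering the duplicated block to the right end using items~2-style permutations (here one must be a little careful that $\rho^+$/$\pi^+$ insert at the \emph{front}, so the reordering/permutation steps are needed to assemble the copy as a suffix).

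\textbf{Items 4--6 (congruence-type properties).} The crucial technical point for item~4 is that a single rewriting step inside ${\tt T}_1$ lifts to a step inside ${\tt T}_1 + {\tt T}_2$. Here I expect the genuine obstacle: a step ${\tt T}_1 \hookrightarrow {\tt S}_1$ applied at position $\mathbf{k} \in {\sf Pos}({\tt T}_1)$ is \emph{not} literally a step at position $\mathbf{k}$ in ${\tt T}_1 + {\tt T}_2$, because concatenating $\Gamma_2$ after $\Gamma_1$ does not change the positions of the children coming from $\Gamma_1$ — actually it does not — so for a step performed strictly below the root the same position $\mathbf{k}$ works verbatim; the only case needing attention is a step performed \emph{at the root} of ${\tt T}_1$ (position $\epsilon$), where the indices $i,j$ into $\Gamma_1$ or $\Delta_1$ still refer to the same elements inside the longer lists $\Gamma_1 \smallfrown \Gamma_2$ and $\Delta_1 \smallfrown \Delta_2$, and one checks rule by rule (inspecting Table~\ref{fig:rewrules}) that performing the rule with those same indices on $\la \Delta_1 \smallfrown \Delta_2; \Gamma_1 \smallfrown \Gamma_2\ra$ produces exactly ${\tt S}_1 + {\tt T}_2$, since each rule touches only the displayed indices and leaves the trailing blocks $\Delta_2$, $\Gamma_2$ untouched (this is essentially the "Inside Rewriting Property" alluded to in the surrounding text, which I would invoke if it is available, otherwise prove the single-step version by the case analysis just sketched and then iterate along the $\hookrightarrow^{*}$ sequence). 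Item~6 then follows by composing: ${\tt S}_1 + {\tt S}_2 \hookrightarrow^{*} {\tt T}_1 + {\tt S}_2$ by item~4, and ${\tt T}_1 + {\tt S}_2 \hookrightarrow^{*} {\tt S}_2 + {\tt T}_1 \hookrightarrow^{*} {\tt T}_2 + {\tt T}_1 \hookrightarrow^{*} {\tt T}_1 + {\tt T}_2$ using items~2 and~4 again. Finally item~5: from ${\tt T}_1$, use item~1 to get ${\tt T}_1 \hookrightarrow^{*} {\tt T}_1 + {\tt T}_1$, then apply item~6 with the two hypotheses ${\tt T}_1 \hookrightarrow^{*} {\tt T}_2$ and ${\tt T}_1 \hookrightarrow^{*} {\tt T}_3$ to obtain ${\tt T}_1 + {\tt T}_1 \hookrightarrow^{*} {\tt T}_2 + {\tt T}_3$, and chain the two. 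The main work, and the only place where anything subtle happens, is the single-step lifting lemma underpinning item~4; everything else is bookkeeping with lists and reuse of earlier items.
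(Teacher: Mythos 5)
Your proposal is correct and follows essentially the same route as the paper: items 1--3 via atom/child duplication, elimination and permutation together with Lemma~\ref{PermutationNode}, and item 4 via the single-step lifting argument (induction on the number of steps with a case analysis on whether the rule is applied at the root of ${\tt T}_1$ or strictly below, noting that indices into $\Delta_1$ and $\Gamma_1$ remain valid in the concatenated lists). The only difference is cosmetic ordering: you derive item 6 from items 4 and 2 and then item 5 from items 1 and 6, whereas the paper first obtains item 5 from items 1, 2 and 4 and then gets item 6 from items 5 and 3; both chains are valid.
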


\begin{proof}
The implication from left to right of the first statement holds by atom and child duplication, and the inverse implication by atom and child elimination.
The second result holds by Lemma \ref{PermutationNode} and child permutation. The third follows by atom and child elimination. The fourth result holds by induction on the number of rewriting steps performed in ${\tt T}_1 \hookrightarrow^{*} {\tt S}_1$ and by cases on the rewriting rules. The fifth statement holds by the fourth result using the statements one and two: 
\begin{equation*}
    {\tt T}_1 \hookrightarrow^{*} {\tt T}_1 + {\tt T}_1 \hookrightarrow^{*} {\tt T}_2 + {\tt T}_1 \hookrightarrow^{*} {\tt T}_1 + {\tt T}_2 \hookrightarrow^{*} {\tt T}_3 + {\tt T}_2 \hookrightarrow^{*} {\tt T}_2 + {\tt T}_3. 
\end{equation*}
Finally, by the fifth statement it suffices to show ${\tt S}_1 + {\tt S}_2 \hookrightarrow^{*} {\tt T}_1$ and ${\tt S}_1 + {\tt S}_2 \hookrightarrow^{*} {\tt T}_2$ to prove the sixth result, which follow by the third statement and the hypotheses.
\end{proof}

The following results state that transformations in subtrees can be extended to the entire tree, allowing for systematic and consistent modifications throughout the tree. In consequence, we can effectively manipulate and modify complex tree structures while leaving the other parts untouched.

\begin{proposition}[Inside Rewriting Property]
\label{InsideRewriting}   
If ${\tt S} \hookrightarrow^{*} {\tt S'}$, then ${\tt T} [{\tt S}]_\mathbf{k} \hookrightarrow^{*} {\tt T} [{\tt S'}]_\mathbf{k}$.
\end{proposition}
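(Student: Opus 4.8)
The plan is to prove the Inside Rewriting Property by a straightforward induction on the length of the position $\mathbf{k}$, using the single-step case as the engine and then lifting it to $\hookrightarrow^*$ by a secondary induction on the number of rewriting steps in ${\tt S} \hookrightarrow^* {\tt S}'$. The key observation is that every rewriting rule in Table~\ref{fig:rewrules} is itself defined \emph{via} a replacement at some position: a step ${\tt S} \hookrightarrow^\mu {\tt S}'$ means ${\tt S}' = {\tt S}[{\tt R}]_{\mathbf{j}}$ where $\mathbf{j} \in {\sf Pos}({\tt S})$ and ${\tt R}$ is the prescribed replacement tree depending only on the local data ${\tt S}|_{\mathbf{j}} = \la \Delta;\Gamma\ra$. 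Hence the whole proof reduces to understanding how replacement at $\mathbf{k}$ interacts with replacement at $\mathbf{j}$ inside the subtree, which is exactly what Lemma~\ref{LemmaSubtreeReplacement} was set up to handle.

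First I would reduce to the one-step case: if ${\tt S} \hookrightarrow^* {\tt S}'$ via ${\tt S} = {\tt S}_0 \hookrightarrow {\tt S}_1 \hookrightarrow \cdots \hookrightarrow {\tt S}_m = {\tt S}'$, it suffices to show ${\tt T}[{\tt S}_\ell]_{\mathbf{k}} \hookrightarrow {\tt T}[{\tt S}_{\ell+1}]_{\mathbf{k}}$ for each $\ell$, and then chain these together with transitivity of $\hookrightarrow^*$ (reflexivity covers $m=0$). So the heart of the matter is: if ${\tt S} \hookrightarrow^\mu {\tt S}'$ then ${\tt T}[{\tt S}]_{\mathbf{k}} \hookrightarrow^\mu {\tt T}[{\tt S}']_{\mathbf{k}}$. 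Write the step ${\tt S} \hookrightarrow^\mu {\tt S}'$ as happening at position $\mathbf{j} \in {\sf Pos}({\tt S})$, so that ${\tt S}' = {\tt S}[{\tt R}]_{\mathbf{j}}$ for the appropriate ${\tt R}$. I claim that the corresponding step on ${\tt T}[{\tt S}]_{\mathbf{k}}$ is performed at position $\mathbf{k}\mathbf{j}$. Indeed, by part~3 of Lemma~\ref{LemmaSubtreeReplacement}, $({\tt T}[{\tt S}]_{\mathbf{k}})|_{\mathbf{k}} = {\tt S}$, hence by part~1, $({\tt T}[{\tt S}]_{\mathbf{k}})|_{\mathbf{k}\mathbf{j}} = ({\tt T}[{\tt S}]_{\mathbf{k}})|_{\mathbf{k}}|_{\mathbf{j}} = {\tt S}|_{\mathbf{j}}$; so the local data at position $\mathbf{k}\mathbf{j}$ in ${\tt T}[{\tt S}]_{\mathbf{k}}$ is identical to the local data at $\mathbf{j}$ in ${\tt S}$, which means rule $\mu$ is applicable there and prescribes the very same replacement tree ${\tt R}$. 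Performing it yields $({\tt T}[{\tt S}]_{\mathbf{k}})[{\tt R}]_{\mathbf{k}\mathbf{j}}$, and by part~5 of Lemma~\ref{LemmaSubtreeReplacement} this equals ${\tt T}[{\tt S}[{\tt R}]_{\mathbf{j}}]_{\mathbf{k}} = {\tt T}[{\tt S}']_{\mathbf{k}}$, as desired. One should also check $\mathbf{k}\mathbf{j} \in {\sf Pos}({\tt T}[{\tt S}]_{\mathbf{k}})$, which follows from $\mathbf{k} \in {\sf Pos}({\tt T})$, $({\tt T}[{\tt S}]_{\mathbf{k}})|_{\mathbf{k}} = {\tt S}$, and $\mathbf{j}\in{\sf Pos}({\tt S})$ (a small auxiliary fact: ${\sf Pos}({\tt T}[{\tt S}]_{\mathbf{k}}) = \{\mathbf{k}'\in{\sf Pos}({\tt T}) : \mathbf{k}\not\preceq\mathbf{k}'\} \cup \{\mathbf{k}\mathbf{j}' : \mathbf{j}'\in{\sf Pos}({\tt S})\}$, or just the inclusion $\supseteq$ we need, provable by induction on $|\mathbf{k}|$ directly from the definitions of ${\sf Pos}$ and replacement).

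The main obstacle, such as it is, is bookkeeping rather than conceptual: one has to be careful that "the step ${\tt S}\hookrightarrow^\mu {\tt S}'$ is performed at position $\mathbf{j}$" is recorded with all its side data (the indices $i,j$ into $\Gamma$, the ordinals, the witnessing children for $\4$ and $\J$), and that all of this transfers verbatim because the subtrees at $\mathbf{j}$ in ${\tt S}$ and at $\mathbf{k}\mathbf{j}$ in ${\tt T}[{\tt S}]_{\mathbf{k}}$ coincide. Since each rewriting rule is specified purely in terms of ${\tt T}|_{\mathbf{k}} = \la\Delta;\Gamma\ra$ and the replacement operation, nothing about the ambient tree enters the rule's definition, so this transfer is immediate once the position arithmetic of Lemma~\ref{LemmaSubtreeReplacement}(1),(3),(5) is in place. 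I would therefore present the argument as: (i) state and quickly prove the ${\sf Pos}$-inclusion lemma if not already available; (ii) prove the one-step lifting using parts 1, 3, 5 of Lemma~\ref{LemmaSubtreeReplacement}; (iii) conclude the $\hookrightarrow^*$ case by induction on the length of the rewriting sequence.
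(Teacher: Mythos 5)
Your proposal is correct and follows essentially the same route as the paper: an outer induction on the number of rewriting steps, reducing to the claim that a single $\mu$-step of ${\tt S}$ at position $\mathbf{j}$ lifts to a $\mu$-step of ${\tt T}[{\tt S}]_\mathbf{k}$ at position $\mathbf{k}\mathbf{j}$, with Lemma~\ref{LemmaSubtreeReplacement} supplying the identity $({\tt T}[{\tt S}]_\mathbf{k})[{\tt R}]_{\mathbf{k}\mathbf{j}} = {\tt T}[{\tt S}[{\tt R}]_\mathbf{j}]_\mathbf{k}$. The only (harmless) difference is that where the paper verifies the legitimacy of the shifted step by an inner induction on the tree structure of ${\tt T}$, you obtain it directly from parts~1 and~3 of that lemma together with the observation that rule applicability depends only on the local subtree.
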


\begin{proof}
By induction on the number of rewriting steps that are performed in ${\tt S} \hookrightarrow^{*} {\tt S'}$. If no rewriting step is performed, the result is trivially satisfied. Now assume ${\tt T}[{\tt S}]_\mathbf{k} \hookrightarrow^{*} {\tt T}[\hat{\tt S}]_\mathbf{k}$ for ${\tt S} \hookrightarrow^{*} \hat{\tt S}$ by performing $n$ rewriting steps. Moreover, let ${\tt S} \hookrightarrow^{*} \hat{\tt S} \hookrightarrow^{\mu} \hat{\tt S}[\tilde{\tt S}]_\mathbf{r}$ for $\tilde{\tt S} \in \MT$ and $\mathbf{r} \in {\sf Pos}(\hat{\tt S})$ according to the rewriting rule $\mu$. Since ${\tt T}[{\tt S}]_\mathbf{k} \hookrightarrow^{*} {\tt T}[\hat{\tt S}]_\mathbf{k}$ by the inductive hypothesis, it suffices to show ${\tt T}[\hat{\tt S}]_\mathbf{k} \hookrightarrow^{\mu} {\tt T}[\hat{\tt S}[\tilde{\tt S}]_\mathbf{r}]_\mathbf{k} = ({\tt T}[\hat{\tt S}]_\mathbf{k})[\tilde{\tt S}]_\mathbf{kr}$ by Lemma \ref{LemmaSubtreeReplacement}. We proceed by induction on the tree structure of ${\tt T}$. For ${\tt T}$ a leaf, by the hypothesis,
\begin{equation*}
    {\tt T}[\hat{\tt S}]_\epsilon = \hat{\tt S} \hookrightarrow^{\mu} \hat{\tt S}[\tilde{\tt S}]_\mathbf{r} = ({\tt T}[\hat{\tt S}]_\epsilon)[\tilde{\tt S}]_{\epsilon \mathbf{r}}.
\end{equation*} 
Let ${\tt T} = \la \Delta ; [(\alpha_1,{\tt S}_1),...,(\alpha_m,{\tt S}_m)] \ra$ such that ${\tt S}_i[\hat{\tt S}]_\mathbf{l} \hookrightarrow^{\mu} ({\tt S}_i[\hat{\tt S}]_\mathbf{l})[\tilde{\tt S}]_{\mathbf{lr}}$ for $\mathbf{l} \in {\sf Pos}({\tt S}_i)$ and $1 \leq i \leq m$. We continue by cases on the length of $\mathbf{k}$. For $\mathbf{k} = \epsilon$, the result is trivially satisfied. For $\mathbf{k} = i\mathbf{\hat{k}}$ such that $\mathbf{\hat{k}} \in {\sf Pos}({\tt S}_i)$ and $1 \leq i \leq n$, we conclude by performing $\mu$ at position $\mathbf{kr}$ using the inductive hypothesis for ${\tt S}_i$ as follows,
\begin{equation*}
\begin{split}
{\tt T}[\hat{\tt S}]_{i\mathbf{\hat{k}}} & = \la \Delta ; [(\alpha_1,{\tt S}_1),..., (\alpha_i,{\tt S}_i[\hat{\tt S}]_{\mathbf{\hat{k}}}),...,(\alpha_m,{\tt S}_m)] \ra \\
 & \hookrightarrow^{\mu} \la \Delta ; [(\alpha_1,{\tt S}_1),..., (\alpha_i,({\tt S}_i[\hat{\tt S}]_{\mathbf{\hat{k}}})[\tilde{\tt S}]_{\mathbf{\hat{k}r}}),...,(\alpha_m,{\tt S}_m)] \ra = ({\tt T}[\hat{\tt S}]_{i\mathbf{\hat{k}}})[\tilde{\tt S}]_{i\mathbf{\hat{k}r}}.
\end{split}
\end{equation*}
\end{proof}

\begin{corollary}
\label{InsideRewritingRepl}
If ${\tt T}|_\mathbf{k} \hookrightarrow^{*} {\tt S}$, then ${\tt T} \hookrightarrow^{*} {\tt T}[{\tt S}]_\mathbf{k}$ for $\mathbf{k} \in {\sf Pos}({\tt T})$. 
\end{corollary}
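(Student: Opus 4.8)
The plan is to derive Corollary~\ref{InsideRewritingRepl} as an immediate consequence of the Inside Rewriting Property (Proposition~\ref{InsideRewriting}) together with the identity $\mathtt{T}[\mathtt{T}|_\mathbf{k}]_\mathbf{k} = \mathtt{T}$ recorded as item~$2$ of Lemma~\ref{LemmaSubtreeReplacement}. The point is simply that replacing the subtree at $\mathbf{k}$ by itself does nothing, so the left-hand side of the implication in Proposition~\ref{InsideRewriting} can be read as a statement about $\mathtt{T}$ directly.

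Concretely, I would argue as follows. Assume $\mathtt{T}|_\mathbf{k} \hookrightarrow^{*} \mathtt{S}$ with $\mathbf{k} \in {\sf Pos}(\mathtt{T})$. Apply Proposition~\ref{InsideRewriting} with the rewriting $\mathtt{S} := \mathtt{T}|_\mathbf{k}$ and $\mathtt{S}' := \mathtt{S}$, which yields $\mathtt{T}[\mathtt{T}|_\mathbf{k}]_\mathbf{k} \hookrightarrow^{*} \mathtt{T}[\mathtt{S}]_\mathbf{k}$. By item~$2$ of Lemma~\ref{LemmaSubtreeReplacement} the left-hand side equals $\mathtt{T}$, so we obtain $\mathtt{T} \hookrightarrow^{*} \mathtt{T}[\mathtt{S}]_\mathbf{k}$, as desired. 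One should note that $\mathbf{k}$ is a legitimate position for the application of Lemma~\ref{LemmaSubtreeReplacement} precisely because it is a position of $\mathtt{T}$, which is exactly the hypothesis we are given.

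There is no real obstacle here: the corollary is a one-line specialization, and all the work has already been done in proving Proposition~\ref{InsideRewriting} and Lemma~\ref{LemmaSubtreeReplacement}. The only thing to be careful about is bookkeeping of position sets — making sure that $\mathbf{k} \in {\sf Pos}(\mathtt{T})$ is what licenses both the subtree $\mathtt{T}|_\mathbf{k}$ being well-defined and the replacement $\mathtt{T}[\mathtt{S}]_\mathbf{k}$ being well-defined — but this is immediate from the definitions of ${\sf Pos}$, subtree, and replacement. Hence I would present the proof in essentially two sentences, citing Proposition~\ref{InsideRewriting} and Lemma~\ref{LemmaSubtreeReplacement}(2).
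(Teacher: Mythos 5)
Your proof is correct and is exactly the argument the paper gives: apply Proposition~\ref{InsideRewriting} to the rewriting ${\tt T}|_\mathbf{k} \hookrightarrow^{*} {\tt S}$ and then use the identity ${\tt T} = {\tt T}[{\tt T}|_\mathbf{k}]_\mathbf{k}$ from Lemma~\ref{LemmaSubtreeReplacement}. No differences worth noting.
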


\begin{proof}
By Proposition \ref{InsideRewriting} since ${\tt T} = {\tt T}[{\tt T}|_\mathbf{k}]_\mathbf{k}$ (Lemma \ref{LemmaSubtreeReplacement}).
\end{proof}

\section{Adequacy and completeness}

We aim to show how the tree rewriting system $\TRC$ faithfully simulates logical derivations in $\RC$ thanks to the embeddings defined in Section \ref{sec:Embeddings}. Thereby, adequacy and completeness theorems are presented as key results that underscore the efficacy of tree rewriting systems in relating logical inference and structural transformation.

Firstly, we show adequacy by proving that if a rewriting step is performed, the sequent of formulas which the trees are mapped to is provable in \RC.

\begin{proposition}
\label{SoundnessStep}
If ${\tt T} \hookrightarrow {\tt T'}$, then $\F({\tt T}) \vdash_{\RC} \F({\tt T'})$ for ${\tt T}, {\tt T'} \in \MT$.
\end{proposition}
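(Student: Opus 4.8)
The plan is to split the proof into a \emph{congruence} part, which reduces the statement to a rewriting step performed at the root, and a \emph{base case} part, which matches each of the eight rules (applied at position $\epsilon$) against an axiom or rule of $\RC$.

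First I would prove a congruence lemma: if $\F({\tt S}) \vdash_{\RC} \F({\tt S'})$, then $\F({\tt T}[{\tt S}]_\mathbf{k}) \vdash_{\RC} \F({\tt T}[{\tt S'}]_\mathbf{k})$ for every ${\tt T} \in \MT$ and $\mathbf{k} \in {\sf Pos}({\tt T})$. This follows by induction on the length of $\mathbf{k}$ (equivalently, on the structure of ${\tt T}$): the case $\mathbf{k} = \epsilon$ is the hypothesis, and for $\mathbf{k} = i\mathbf{r}$ with ${\tt T} = \la \Delta ; [(\alpha_1,{\tt S}_1),\dots,(\alpha_m,{\tt S}_m)] \ra$ the formulas $\F({\tt T}[{\tt S}]_{i\mathbf{r}})$ and $\F({\tt T}[{\tt S'}]_{i\mathbf{r}})$ are conjunctions agreeing on all conjuncts except that $\la \alpha_i \ra \F({\tt S}_i[{\tt S}]_\mathbf{r})$ is replaced by $\la \alpha_i \ra \F({\tt S}_i[{\tt S'}]_\mathbf{r})$; the inductive hypothesis gives $\F({\tt S}_i[{\tt S}]_\mathbf{r}) \vdash_{\RC} \F({\tt S}_i[{\tt S'}]_\mathbf{r})$, distribution promotes this to the diamonds, and elimination/introduction of conjunction together with cut (all already in $\K^+$) yield the claim. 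Combined with $\F({\tt T}) = \F({\tt T}[{\tt T}|_\mathbf{k}]_\mathbf{k})$ (Lemma~\ref{LemmaSubtreeReplacement}) this reduces the proposition to the case in which the step is applied at the root, i.e.\ to showing $\F(\la \Delta ; \Gamma \ra) \vdash_{\RC} \F({\tt S'})$ where ${\tt S'}$ is the one-step root rewrite of $\la \Delta ; \Gamma \ra$.

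For the base cases I would use $\F(\la \Delta ; \Gamma \ra) = \bigwedge \Delta \wedge \bigwedge \Diamond \Gamma$, the identity $\bigwedge(\Pi_1 \smallfrown \Pi_2) \equiv_{\K^+} \bigwedge \Pi_1 \wedge \bigwedge \Pi_2$, and Lemma~\ref{RCconjdiamondn}. The atomic rules $\rho^\pm$ and the child rules $\pi^\pm$ merely duplicate or delete a conjunct of $\bigwedge \Delta$, respectively of $\bigwedge \Diamond \Gamma$, which is provable by introduction and elimination of conjunction in $\K^+$; the structural rule $\sigma$ is the commutativity of $\wedge$. For the transitivity rule $\4$, where $\#_i \Gamma = (\beta, \la \tilde\Delta ; \tilde\Gamma \ra)$ and $\#_j \tilde\Gamma = (\beta, {\tt S})$, the conjunct $\la \beta \ra \F({\tt S})$ occurs inside $\F(\la \tilde\Delta ; \tilde\Gamma \ra)$, so $\F(\la \tilde\Delta ; \tilde\Gamma \ra) \vdash_{\K^+} \la \beta \ra \F({\tt S})$; distribution then gives $\la \beta \ra \F(\la \tilde\Delta ; \tilde\Gamma \ra) \vdash_{\RC} \la \beta \ra \la \beta \ra \F({\tt S})$, and the transitivity axiom plus cut yield $\la \beta \ra \F(\la \tilde\Delta ; \tilde\Gamma \ra) \vdash_{\RC} \la \beta \ra \F({\tt S})$, after which the full conjunction $\bigwedge \Diamond \Gamma$ is handled exactly as in the congruence lemma. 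The monotonicity rule $\lambda$ is a direct instance of the monotonicity axiom ($\la \alpha \ra \F({\tt S}) \vdash_{\RC} \la \beta \ra \F({\tt S})$ for $\alpha > \beta$). For $\J$, with $\#_i \Gamma = (\alpha, \la \tilde\Delta ; \tilde\Gamma \ra)$, $\#_j \Gamma = (\beta, {\tt S})$ and $\alpha > \beta$, the $\J$ axiom gives $\la \alpha \ra \F(\la \tilde\Delta ; \tilde\Gamma \ra) \wedge \la \beta \ra \F({\tt S}) \vdash_{\RC} \la \alpha \ra \bigl( \F(\la \tilde\Delta ; \tilde\Gamma \ra) \wedge \la \beta \ra \F({\tt S}) \bigr)$, and since $\F(\la \tilde\Delta ; \tilde\Gamma \ra) \wedge \la \beta \ra \F({\tt S}) \equiv_{\K^+} \F(\la \tilde\Delta ; \tilde\Gamma \smallfrown (\beta, {\tt S}) \ra)$, distribution rewrites the right-hand side to $\la \alpha \ra \F(\la \tilde\Delta ; \tilde\Gamma \smallfrown (\beta, {\tt S}) \ra)$; assembling this with $\bigwedge \Delta$ and the remaining conjuncts of $\bigwedge \Diamond \Gamma$ (discarding the $j$th by conjunction elimination) produces $\F(\la \Delta ; \Gamma \ra) \vdash_{\RC} \F({\tt S'})$.

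The hard part will not be conceptual but purely organisational: in the $\4$ and $\J$ cases one must track exactly which conjunct of which nested sub-conjunction is being modified, and repeatedly use the associativity and commutativity of $\bigwedge$ over list concatenation together with Lemma~\ref{RCconjdiamondn} to decompose $\bigwedge \Diamond \Gamma$ and $\F(\la \tilde\Delta ; \tilde\Gamma \ra)$ along the relevant positions. Everything else is a one-line $\K^+$ manoeuvre on conjunctions or a direct appeal to one of the three $\RC$ axioms, and this bookkeeping is also the place where a proof-assistant formalisation would need the most care.
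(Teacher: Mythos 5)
Your proposal is correct and follows essentially the same route as the paper: the paper also proceeds by induction on the tree structure, using distribution to push the inductive hypothesis under the diamond when the step occurs at a non-root position (your congruence lemma is exactly this inductive step, factored out), and then handles the root cases by matching each rule against the corresponding $\K^+$ manipulation or $\RC$ axiom, with the $\4$ and $\J$ cases worked out just as you describe.
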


\begin{proof}
By induction on the tree structure of ${\tt T}$. For ${\tt T}$ a leaf the result follows easily by cases on the performed rewriting rule. Now consider ${\tt T} = \la \Delta ; \Gamma \ra$ for $\Gamma = [(\alpha_1,{\tt S}_1),...,(\alpha_m,{\tt S}_m)]$ such that $\F({\tt S}_i) \vdash_\RC \F({\tt S'})$ if ${\tt S}_i \hookrightarrow {\tt S'}$ for $1 \leq i \leq m$. Assuming $\la \Delta ; \Gamma \ra \hookrightarrow^{\mu} {\tt T'}$ for $\mu$ a rewriting rule, we show $\F({\tt T}) \vdash_\RC \F({\tt T'})$ by cases on the length of the position at which $\mu$ is performed. 

First consider a rewriting at a position $i\mathbf{k} \in {\sf Pos}({\tt T})$. By definition, ${\tt T'}$ is of the form $\la \Delta ; [(\alpha_1,{\tt S}_1),...,(\alpha_i,{\tt S'}),...,(\alpha_m,{\tt S}_m)] \ra$ for ${\tt S}_i \hookrightarrow^{\mu} {\tt S'}$ by rewriting at position $\mathbf{k}$. Hence, by the inductive hypothesis, $\F({\tt S}_i) \vdash_\RC \F({\tt S'})$. Thus,
\begin{align*}
\F({\tt T}) & \equiv_{\K^+} \bigwedge \Delta \wedge \la \alpha_i \ra \F({\tt S}_i) \wedge \bigwedge \Diamond (\Gamma^{-i}) \\
& \vdash_\RC \bigwedge \Delta \wedge \la \alpha_i \ra \F({\tt S'}) \wedge \bigwedge \Diamond (\Gamma^{-i}) \equiv_{\K^+} \F({\tt T'}).
\end{align*}

For rewriting performed at position $\epsilon$, the proof concludes easily by cases on $\mu$. Let see in some detail the cases of transitivity and $\J$ rewriting rules.

\begin{description}
    \item[$\4$-rule:] Consider ${\tt T} \hookrightarrow^{\4} \la \Delta ; \Gamma[(\beta, {\tt S})]_i \ra$ such that $\#_i \Gamma = (\beta, \la \tilde{\Delta}; \tilde{\Gamma} \ra)$ for $0 < i \leq |\Gamma|$ and $\#_j \tilde{\Gamma} = (\beta , {\tt S})$ for $0 < j \leq |\tilde{\Gamma}|$. By Lemma \ref{RCconjdiamondn} and transitivity rule for $\RC$ we conclude,
    \begin{equation*}
    \begin{split}
    \F({\tt T}) & \equiv_{\K^+} \bigwedge \Delta \wedge \la \beta \ra \F(\la \tilde{\Delta} ; \tilde{\Gamma} \ra) \wedge \bigwedge \Diamond (\Gamma^{-i}) \\
    & \equiv_{\K^+} \bigwedge \Delta \wedge \la \beta \ra (\bigwedge \tilde{\Delta} \wedge \la \beta \ra \F({\tt S}) \wedge \bigwedge \Diamond (\tilde{\Gamma}^{-j})) \wedge \bigwedge \Diamond (\Gamma^{-i}) \\
    & \vdash_{\K^+} \bigwedge \Delta \wedge \la \beta \ra \la \beta \ra \F({\tt S}) \wedge \bigwedge \Diamond (\Gamma^{-i})\\
    & \vdash_\RC \bigwedge \Delta \wedge \la \beta \ra \F({\tt S}) \wedge \bigwedge \Diamond (\Gamma^{-i}) \\
    & \equiv_{\K^+} \F(\la \Delta ; \Gamma[(\beta, {\tt S})]_i \ra).
    \end{split}
    \end{equation*}
    \item[$\J$-rule:] Consider ${\tt T} \hookrightarrow^{\J} \la \Delta ; (\Gamma[(\alpha, \la \tilde{\Delta} ; \tilde{\Gamma} \smallfrown (\beta , {\tt S}) \ra)]_i)^{-j} \ra$ such that $\#_i \Gamma = (\alpha , \la \tilde{\Delta} ; \tilde{\Gamma} \ra)$ and $\#_j \Gamma = (\beta, {\tt S})$ for $0 < i,j, \leq |\Gamma|$ satisfying $i \neq j$. Let $i < j$ without loss of generality. By the $J$ rule for $\RC$ we conclude,
    \begin{equation*}
    \begin{split}
    \F({\tt T}) & \equiv_{\K^+} \bigwedge \Delta \wedge \la \alpha \ra \F(\la \tilde{\Delta} ; \tilde{\Gamma} \ra) \wedge \la \beta \ra \F({\tt S}) \wedge \bigwedge \Diamond ((\Gamma^{-j})^{-i}) \\
    & \vdash_\RC \bigwedge \Delta \wedge \la \alpha \ra (\F(\la \tilde{\Delta} ; \tilde{\Gamma} \ra) \wedge \la \beta \ra \F({\tt S})) \wedge \bigwedge \Diamond ((\Gamma^{-j})^{-i}) \\
    & \equiv_{\K^+} \bigwedge \Delta \wedge \la \alpha \ra (\F(\la \tilde{\Delta} ; \tilde{\Gamma} \smallfrown (\beta,{\tt S}) \ra)) \wedge \bigwedge \Diamond ((\Gamma^{-j})^{-i}) \\
    & \equiv_{\K^+} \F(\la \Delta ; (\Gamma[(\alpha, \la \tilde{\Delta} ; \tilde{\Gamma} \smallfrown (\beta , {\tt S})  \ra)]_i)^{-j} \ra).
    \end{split}
    \end{equation*}
\end{description}
\end{proof}

\begin{theorem}[Adequacy]
\label{Soundness}
If ${\tt T} \hookrightarrow^{*} {\tt T'}$, then $\F({\tt T}) \vdash_{\RC} \F({\tt T'})$ for ${\tt T}, {\tt T'} \in \MT$.
\end{theorem}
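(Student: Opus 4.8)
The plan is to reduce the Adequacy theorem to its single-step counterpart, Proposition~\ref{SoundnessStep}, which we may assume. The statement ${\tt T} \hookrightarrow^{*} {\tt T'}$ means, by definition, that ${\tt T'}$ is reachable from ${\tt T}$ by a finite sequence of applications of $\hookrightarrow$; so the natural proof is by induction on the number $n$ of rewriting steps in ${\tt T} \hookrightarrow^{*} {\tt T'}$.

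\begin{proof}
By induction on the number $n$ of rewriting steps performed in ${\tt T} \hookrightarrow^{*} {\tt T'}$. If $n = 0$, then ${\tt T} = {\tt T'}$ and the result follows from the axiom $\F({\tt T}) \vdash_{\RC} \F({\tt T})$ of $\K^+ \subseteq \RC$. For the inductive step, suppose ${\tt T} \hookrightarrow^{*} {\tt T'}$ by $n+1$ steps, so that there is $\hat{\tt T} \in \MT$ with ${\tt T} \hookrightarrow^{*} \hat{\tt T}$ by $n$ steps and $\hat{\tt T} \hookrightarrow {\tt T'}$. By the inductive hypothesis, $\F({\tt T}) \vdash_{\RC} \F(\hat{\tt T})$, and by Proposition~\ref{SoundnessStep}, $\F(\hat{\tt T}) \vdash_{\RC} \F({\tt T'})$. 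Applying the cut rule of $\K^+$ (hence of $\RC$), we conclude $\F({\tt T}) \vdash_{\RC} \F({\tt T'})$.
\end{proof}

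There is essentially no obstacle here: the entire content of Adequacy was already discharged at the single-step level in Proposition~\ref{SoundnessStep} (where the genuine work — the case analysis on the eight rewriting rules, the appeal to transitivity, monotonicity and the $J$ axiom of $\RC$, and the induction over positions lifting a deep rewrite to the whole tree — takes place), and the transitive closure is handled purely formally. The only points worth making explicit are that the reflexive part of $\hookrightarrow^{*}$ is covered by the identity axiom $\varphi \vdash_{\K^+} \varphi$, and that chaining the single-step derivations is exactly the cut rule of $\K^+$. One could alternatively phrase the induction as an outer induction proving that $\vdash_{\RC}$ composed along $\hookrightarrow^{*}$-paths is transitive, but that is the same argument. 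If anything deserves care, it is merely confirming that the ``number of steps'' induction metric is well-defined, i.e.\ that $\hookrightarrow^{*}$ is literally the reflexive transitive closure of $\hookrightarrow$, which the excerpt states explicitly.
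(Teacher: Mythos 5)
Your proof is correct and is exactly the paper's argument: the paper also proves Adequacy by induction on the number of rewriting steps, invoking Proposition~\ref{SoundnessStep} for each step (the paper merely states this in one line, whereas you spell out the base case via the identity axiom and the chaining via cut). No discrepancy to report.
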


\begin{proof}
By an easy induction on the number of rewriting steps that are performed using Proposition \ref{SoundnessStep}.
\end{proof}

We conclude this section by showing that $\TRC$ is complete with respect to $\RC$.

\begin{theorem}[Completeness]
\label{Completeness}
If $\varphi \vdash_{\RC} \psi$, then $\T(\varphi) \hookrightarrow^{*} \T(\psi)$ for $\varphi, \psi \in \mathcal{L}^{+}$.
\end{theorem}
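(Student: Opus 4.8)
The natural strategy is to induct on the length of the derivation of $\varphi \vdash_{\RC} \psi$ in the Hilbert-style calculus, showing that each axiom and rule of $\RC$ is mirrored by a rewriting sequence between the corresponding modal trees. So first I would set up the induction: for each axiom $\varphi \vdash_{\RC} \varphi$, $\varphi \vdash_{\RC} \top$, the conjunction elimination axioms, and the three $\RC$-specific axioms (transitivity, monotonicity, $\J$), I must exhibit a rewriting $\T(\varphi) \hookrightarrow^{*} \T(\psi)$; and for each rule (cut, conjunction introduction, distribution) I must show that the conclusion's rewriting follows from the premises' rewritings.

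Most base cases are routine given the machinery already developed. The axiom $\varphi \vdash_{\RC} \varphi$ is reflexivity of $\hookrightarrow^{*}$. The axiom $\varphi \vdash_{\RC} \top$ follows because $\T(\varphi) \hookrightarrow^{*} \la\varnothing;\varnothing\ra = \T(\top)$ by repeated atom elimination ($\rho^-$) and child elimination ($\pi^-$) — or more slickly, since $\T(\varphi) = \T(\varphi) + \T(\top)$ up to the relevant identities, one can invoke Lemma~\ref{LemmaRewSum}(3). Conjunction elimination $\varphi \wedge \psi \vdash_{\RC} \varphi$ is exactly $\T(\varphi) + \T(\psi) \hookrightarrow^{*} \T(\varphi)$, again Lemma~\ref{LemmaRewSum}(3). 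For conjunction introduction, if $\T(\varphi) \hookrightarrow^{*} \T(\psi)$ and $\T(\varphi) \hookrightarrow^{*} \T(\phi)$, then $\T(\varphi) \hookrightarrow^{*} \T(\psi) + \T(\phi) = \T(\psi \wedge \phi)$ by Lemma~\ref{LemmaRewSum}(5). For distribution, if $\T(\varphi) \hookrightarrow^{*} \T(\psi)$, then since $\T(\la\alpha\ra\varphi) = \la\varnothing;[(\alpha,\T(\varphi))]\ra$, applying the Inside Rewriting Property (Proposition~\ref{InsideRewriting}) at position $1$ gives $\T(\la\alpha\ra\varphi) \hookrightarrow^{*} \la\varnothing;[(\alpha,\T(\psi))]\ra = \T(\la\alpha\ra\psi)$. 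Cut is simply transitivity of $\hookrightarrow^{*}$. The transitivity axiom $\la\alpha\ra\la\alpha\ra\varphi \vdash_{\RC} \la\alpha\ra\varphi$ is handled by one application of the $\4$-rule (possibly at position $1$, via Inside Rewriting), and monotonicity $\la\alpha\ra\varphi \vdash_{\RC} \la\beta\ra\varphi$ with $\alpha > \beta$ by one application of the $\lambda$-rule.

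The main obstacle is the $\J$ axiom $\la\alpha\ra\varphi \wedge \la\beta\ra\psi \vdash_{\RC} \la\alpha\ra(\varphi \wedge \la\beta\ra\psi)$ with $\alpha > \beta$, because the tree embedding of conjunction uses $+$, which concatenates children lists, and the $\J$-rule as tabulated operates on a single pair $\Gamma$ with two designated children. The tree $\T(\la\alpha\ra\varphi \wedge \la\beta\ra\psi)$ is $\la\varnothing;[(\alpha,\T(\varphi)),(\beta,\T(\psi))]\ra$, whereas $\T(\la\alpha\ra(\varphi \wedge \la\beta\ra\psi)) = \la\varnothing;[(\alpha, \T(\varphi) + \la\varnothing;[(\beta,\T(\psi))]\ra)]\ra = \la\varnothing;[(\alpha,\la\Delta_\varphi; \Gamma_\varphi \smallfrown (\beta,\T(\psi))\ra)]\ra$ where $\T(\varphi) = \la\Delta_\varphi;\Gamma_\varphi\ra$. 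This is precisely the shape of the $\J$-rule with $i = 1$, $j = 2$: $\#_1\Gamma = (\alpha,\la\Delta_\varphi;\Gamma_\varphi\ra)$, $\#_2\Gamma = (\beta,\T(\psi))$, and the rule produces $(\Gamma[(\alpha,\la\Delta_\varphi;\Gamma_\varphi\smallfrown(\beta,\T(\psi))\ra)]_1)^{-2}$, which is exactly the desired singleton list. So the $\J$ axiom needs just one application of the $\J$-rule at position $\epsilon$ — I should double check the index bookkeeping and the side condition $\alpha > \beta$ matches, but this goes through directly. Finally, the general completeness statement follows by combining all cases via the induction, using that cut and conjunction introduction are exactly the closure properties of $\hookrightarrow^{*}$ provided by transitivity and Lemma~\ref{LemmaRewSum}(5), and distribution via Inside Rewriting; throughout, one freely uses Proposition~\ref{PropInvEmbedding} to pass between formulas and trees and Lemma~\ref{PermutationNode} / Lemma~\ref{LemmaRewSum} to handle the associativity and commutativity of $\wedge$ that the formula-level axioms implicitly rely on.
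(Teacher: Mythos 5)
Your proposal is correct and takes essentially the same route as the paper: induction on the length of the $\RC$ derivation, with the $\K^+$ axioms and rules discharged via Lemma~\ref{LemmaRewSum} and the Inside Rewriting Property, and each of the transitivity, monotonicity and $\J$ axioms handled by a single application of the corresponding rewriting rule at the root position. Your index bookkeeping for the $\J$ case (with $i=1$, $j=2$ at position $\epsilon$) matches the paper's computation exactly; the only negligible difference is that the $\4$-rule for the transitivity axiom is applied directly at $\epsilon$ rather than at position $1$.
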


\begin{proof}
    By induction on the length of the \RC derivation. 
    
    \begin{description}
        \item If $\varphi \vdash_\RC \varphi$, then $\T(\varphi) \hookrightarrow^{*} \T(\varphi)$ by applying no rewriting rule; if $\varphi \vdash_\RC \top$, then $\T(\varphi) \hookrightarrow^{*} \T(\top)$ by atom and child elimination.
        \item[\textnormal{\textit{(Cut):}}] If $\varphi \vdash_\RC \psi$ and $\psi \vdash_\RC \phi$ such that $\T(\varphi) \hookrightarrow^{*} \T(\psi)$ and $\T(\psi) \hookrightarrow^{*} \T(\phi)$, then $\T(\varphi) \hookrightarrow^{*} \T(\phi) \hookrightarrow^{*} \T(\phi)$.
        \item[\textnormal{\textit{(Elimination of conjunction):}}] If $\varphi \wedge \psi \vdash_\RC \varphi$, by Lemma \ref{LemmaRewSum} follows $\T(\varphi \wedge \psi) = \T (\varphi) + \T(\psi) \hookrightarrow^{*} \T(\varphi)$. Analogously, if $\varphi \wedge \psi \vdash_\RC \psi$ then $\T(\varphi \wedge \psi) \hookrightarrow^{*} \T(\psi)$ by Lemma \ref{LemmaRewSum}.
        \item[\textnormal{\textit{(Introduction to conjunction):}}] If $\varphi \vdash_\RC \psi$ and $\varphi \vdash_\RC \phi$ such that $\T(\varphi) \hookrightarrow^{*} \T(\psi)$ and $\T(\varphi) \hookrightarrow^{*} \T(\phi)$, by Lemma \ref{LemmaRewSum} we conclude $\T(\varphi) \hookrightarrow^{*} \T(\psi) + \T(\phi) = \T(\psi \wedge \phi)$.
        \item[\textnormal{\textit{(Distribution):}}] If $\varphi \vdash_\RC \psi$ such that $\T(\varphi) \hookrightarrow^{*} \T(\psi)$, then $\T(\la \alpha \ra \varphi) \hookrightarrow^{*} \T(\la \alpha \ra \psi)$ by the inside rewriting property (Corollary \ref{InsideRewritingRepl}).
        \item[\textnormal{\textit{(Transitivity):}}] If $\la \alpha \ra \la \alpha \ra \varphi \vdash_\RC \la \alpha \ra \varphi$, by the $\4$-rule, 
        \begin{equation*}
            \T(\la \alpha \ra \la \alpha \ra \varphi) = \la \varnothing ; [(\alpha,\la \varnothing;[(\alpha,\T(\varphi))]\ra)]\ra \hookrightarrow^{\4} \la \varnothing;[(\alpha,\T(\varphi))] \ra = \T(\la \alpha \ra \varphi).
        \end{equation*}
        \item[\textnormal{\textit{(Monotonicity):}}] If $\la \alpha \ra \varphi \vdash_\RC \la \beta \ra \varphi$ for $\alpha > \beta$, by the $\lambda$-rule, 
        \begin{equation*}
            \T(\la \alpha \ra \varphi) = \la \varnothing ; [(\alpha, \T(\varphi))] \ra \hookrightarrow^{\lambda} \la \varnothing ; [(\beta, \T(\varphi))] \ra = \T(\la \beta \ra \varphi).
        \end{equation*}
        \item[\textnormal{\textit{(J):}}] Consider $\alpha > \beta$ and $\T(\varphi) = \la \Delta ; \Gamma \ra$. If $\la \alpha \ra \varphi \wedge \la \beta \ra \psi \vdash_\RC \la \alpha \ra (\varphi \wedge \la \beta \ra \psi)$, by the $\J$-rule,
        \begin{equation*}
        \begin{split}
        \T(\la \alpha \ra \varphi \wedge \la \beta \ra \psi) & = \la \varnothing;[(\alpha, \la \Delta ; \Gamma \ra),(\beta,\T(\psi))]\ra \hookrightarrow^{\J} \la \varnothing;[(\alpha,\la \Delta ; \Gamma \smallfrown (\beta,\T(\psi)) \ra)]\ra \\
        & = \la \varnothing;[(\alpha,\T(\varphi) + \T(\la \beta \ra \psi))] \ra = \T(\la \alpha \ra (\varphi \wedge \la \beta \ra \psi)).
        \end{split}    
        \end{equation*}
    \end{description}
\end{proof}

This concluding corollary states that a sequence can be shown in $\RC$ by identifying transformations within the trees in which the involved formulas are embedded into. Likewise, a rewriting can be proven by showing the corresponding $\RC$ sequence for the formulas embedding the corresponding trees.

\begin{corollary}
\label{TRStoRC}
$\T(\varphi) \hookrightarrow^{*} \T(\psi) \Longrightarrow \varphi \vdash_\RC \psi$ and $\F({\tt T}) \vdash_\RC \F({\tt T'}) \Longrightarrow {\tt T} \hookrightarrow^{*} {\tt T'}$.
\end{corollary}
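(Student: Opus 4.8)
The plan is to derive both implications from results already established in the excerpt, so no new inductions are needed. For the first implication, suppose $\T(\varphi) \hookrightarrow^{*} \T(\psi)$. By the Adequacy theorem (Theorem~\ref{Soundness}) this gives $\F(\T(\varphi)) \vdash_\RC \F(\T(\psi))$. Now I invoke the embedding composition result (Proposition~\ref{PropInvEmbedding}), which tells us $\F \circ \T(\varphi) \equiv_{\K^+} \varphi$ and $\F \circ \T(\psi) \equiv_{\K^+} \psi$. Since $\K^+$ is a subsystem of $\RC$, the equivalences $\varphi \equiv_{\K^+} \F(\T(\varphi))$ and $\psi \equiv_{\K^+} \F(\T(\psi))$ hold a fortiori for $\RC$, so chaining with (Cut) in $\RC$ yields $\varphi \vdash_\RC \F(\T(\varphi)) \vdash_\RC \F(\T(\psi)) \vdash_\RC \psi$, hence $\varphi \vdash_\RC \psi$.

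For the second implication, suppose $\F({\tt T}) \vdash_\RC \F({\tt T'})$. By the Completeness theorem (Theorem~\ref{Completeness}) applied to the formulas $\F({\tt T})$ and $\F({\tt T'})$, we obtain $\T(\F({\tt T})) \hookrightarrow^{*} \T(\F({\tt T'}))$. But again by Proposition~\ref{PropInvEmbedding}, this time using $\T \circ \F = id_{\MT}$, we have $\T(\F({\tt T})) = {\tt T}$ and $\T(\F({\tt T'})) = {\tt T'}$ exactly (not merely up to equivalence), so this reads ${\tt T} \hookrightarrow^{*} {\tt T'}$ directly.

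I do not expect a genuine obstacle here, since the corollary is essentially just the contrapositive packaging of Adequacy and Completeness via the inversion of the embeddings. The only point requiring a little care is that $\F \circ \T$ is the identity only modulo $\equiv_{\K^+}$, so in the first implication one must explicitly pass through $\RC$-derivability of the $\K^+$-equivalences rather than claiming literal equality; the $\T \circ \F$ direction, by contrast, is a strict identity and needs no such adjustment. A secondary routine check is that $\varphi \vdash_{\K^+} \psi$ implies $\varphi \vdash_\RC \psi$, which is immediate from the definition of $\RC$ as an extension of $\K^+$.
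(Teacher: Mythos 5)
Your proposal is correct and follows exactly the paper's argument: the paper proves this corollary in one line by citing Adequacy, Completeness, and the embedding composition property, and your write-up simply makes explicit the chaining through $\equiv_{\K^+}$ (lifted to $\RC$ via cut) in the first implication and the strict identity $\T \circ \F = id_{\MT}$ in the second. The care you take in distinguishing the two directions of Proposition~\ref{PropInvEmbedding} is exactly the right point to flag.
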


\begin{proof}
By adequacy and completeness (Theorems \ref{Soundness} and \ref{Completeness}, respectively) using the embedding composition property (Proposition \ref{PropInvEmbedding}). 
\end{proof}

\section{The Rewrite Normalization Theorem}
\label{sec: Normalization}

In this section we present our rewriting normalization theorem, which allows us to perform rewriting in a designated sequence of rewriting rules according to their kinds. Specifically, we can always perform the rewrite process using normal rewriting sequences.

\begin{definition}(Normal rewrite)
A list of rewriting rules $\Omega$ is a {\em normal rewriting sequence} if it is of the form 
\begin{equation*}
    \Omega_{\pi^{+}} \smallfrown \Omega_{\diamond} \smallfrown \Omega_\delta \smallfrown \Omega_\rho \smallfrown \Omega_\sigma
\end{equation*} 
for $\Omega_{\pi^+}, \Omega_\diamond, \Omega_\delta, \Omega_\rho$ and $\Omega_\sigma$ lists of replicative, modal, decreasing, atomic and structural rewriting rules, respectively.
\end{definition}

The order of the presented normal rewriting sequence adheres to the following principles. Firstly, performing any kind of rewriting rule before a replicative one cannot be equivalently reversed. Similarly, the order of modal and decreasing rewriting rules cannot be interchanged (for example, it may be necessary to decrease the label of an edge in order to apply $\4$-rule). Finally, atomic and structural rewriting rules, which pertain to node labels and child permutation, are placed last because nodes and children may be removed during the rewriting process.

In the following, we disclose the results needed to establish the normal rewriting theorem. Their proofs delve into technical intricacies, requiring meticulous attention to the rewritten positions and the arguments determining the application of the concerned rewriting rules. If two rewriting rules are applied in different positions of the tree such that their outcome transformation is disjoint, their permutation is proven straightforward. However, if their application intersects, additional transformations may be required. To illustrate these scenarios, we provide simplified examples alongside the proofs.

\begin{lemma}
\label{PermutationWithRest}
If ${\tt T} \hookrightarrow^{\sigma} \circ \hookrightarrow^{\mu} {\tt S}$, then ${\tt T} \hookrightarrow^{\mu} \circ \hookrightarrow^{\sigma *} {\tt S}$ for $\mu \in \mathscr{R} \setminus \{\sigma\}$.
\end{lemma}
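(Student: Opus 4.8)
The statement says that a structural step ($\sigma$) followed by any non-structural step $\mu$ can be rewritten as: first $\mu$, then zero or more $\sigma$-steps. The plan is to unpack the definitions of the two steps and track how the positions and indices interact. Write ${\tt T} \hookrightarrow^\sigma {\tt T}'$ as a replacement at some position $\mathbf{k}$ of the subtree $\la \Delta ; \Gamma \ra$ by $\la \Delta ; \Gamma^{i \leftrightarrow j} \ra$, and write ${\tt T}' \hookrightarrow^\mu {\tt S}$ as a replacement at some position $\mathbf{k}'$ of a subtree of ${\tt T}'$. The argument will split on how $\mathbf{k}$ and $\mathbf{k}'$ relate as strings.

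\textbf{Disjoint / ancestor cases.} If $\mathbf{k}'$ is not comparable with $\mathbf{k}$ (neither a prefix of the other), the two rewrites act on disjoint subtrees; using the commutation facts from Lemma \ref{LemmaSubtreeReplacement} (parts 4 and 5 give the needed $({\tt T}[\hat{\tt S}]_\mathbf{p})[{\tt S}]_\mathbf{q}$ identities when $\mathbf p, \mathbf q$ diverge) one checks that performing $\mu$ first at $\mathbf{k}'$ and then $\sigma$ at $\mathbf{k}$ yields the same ${\tt S}$, so a single $\sigma$-step suffices. If $\mathbf{k}$ is a proper prefix of $\mathbf{k}'$, then $\mu$ is performed inside one of the children ${\tt S}_1, \dots, {\tt S}_{|\Gamma|}$ of the node at $\mathbf{k}$; since $\Gamma^{i\leftrightarrow j}$ only permutes which child sits in which slot, $\mu$ is still applicable to ${\tt T}$ (at the position pointing to the same child, with the first coordinate after $\mathbf{k}$ adjusted according to the transposition $i \leftrightarrow j$), and applying $\sigma$ afterwards restores the permutation; again one $\sigma$-step. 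Symmetrically, if $\mathbf{k}'$ is a proper prefix of $\mathbf{k}$, the $\mu$-step happens strictly above the permuted node and does not touch $\Gamma$ at all unless $\mu$ is one of the rules whose target subtree contains the node at $\mathbf{k}$ — this reduces to the genuinely interesting case below.

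\textbf{Coinciding-position case ($\mathbf{k}' = \mathbf{k}$), the main obstacle.} Here both rules fire at the same node $\la \Delta ; \Gamma \ra$ — with $\sigma$ first producing $\la \Delta ; \Gamma^{i \leftrightarrow j} \ra$ and then $\mu$ acting on that. I expect this to be the crux, handled by a case analysis on $\mu \in \{\rho^+, \rho^-, \pi^+, \pi^-, \4, \lambda, \J\}$. For $\rho^\pm$ the $\mu$-step only touches $\Delta$, so it commutes with the $\Gamma$-permutation verbatim (one $\sigma$-step). For $\pi^+$ (duplicating child $\ell$ of $\Gamma^{i\leftrightarrow j}$, which is child $\sigma_{ij}(\ell)$ of $\Gamma$): duplicate that child in $\Gamma$ first, then permute; but note $\pi^+$ prepends the copy, so after $\pi^+$ the indices shift and one needs a short sequence of $\sigma$-steps (child permutations) to move everything into the final arrangement — this is exactly why the conclusion allows $\hookrightarrow^{\sigma *}$ rather than a single $\sigma$. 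For $\pi^-$ (erasing child $\ell$ of $\Gamma^{i\leftrightarrow j}$): erase the corresponding child of $\Gamma$, then permute the surviving children into place with some $\sigma$-steps. For $\4$ and $\lambda$: these modify one designated child $\#_\ell$ of the permuted list (replacing it by $(\beta,{\tt S})$ under side conditions that depend only on that child and, for $\4$, on its own children — none of which the permutation changed); so perform the same modification on the matching child of $\Gamma$, then permute. For $\J$: it replaces child $\#_i$ by an enlarged tree and deletes child $\#_j$ of the permuted list; perform the analogous $\4$-free $\J$-transformation on the corresponding two children of $\Gamma$ (the side condition $\alpha > \beta$ refers only to those two edge labels, which permutation preserves), then reorder with $\sigma$-steps.

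\textbf{Bookkeeping.} The recurring subtlety throughout is index tracking: $\sigma$ changes which physical child occupies slot $\ell$, and $\pi^+$, $\J$ prepend or delete elements, so the "same" rewrite on ${\tt T}$ must be applied at a reindexed slot, and the trailing $\sigma$-steps must be chosen to realize the appropriate permutation of $\{1,\dots,|\Gamma'|\}$. I would package the claim "any permutation of a node's children is achievable by finitely many $\sigma$-steps" as an auxiliary observation (immediate, since transpositions of adjacent — indeed arbitrary — slots are exactly $\sigma$-steps and generate the symmetric group), and also use Lemma \ref{PermutationNode}/Lemma \ref{LemmaRewSum} where convenient. With that observation in hand, each case reduces to checking that $\mu$ applied to $\Gamma$ and then suitably permuted equals $\mu$ applied to $\Gamma^{i\leftrightarrow j}$, which is a finite verification per rule. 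The only real work is being careful that the side conditions of $\4$, $\lambda$, $\J$ are permutation-invariant, which they are since they constrain only edge labels and the internal structure of individual children.
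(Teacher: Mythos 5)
Your proposal is correct and follows essentially the same strategy as the paper, which itself only gives a two-sentence sketch (case analysis on $\mu$, combined with an induction on the length of the position of $\sigma$ and cases on the position of $\mu$ — i.e.\ exactly the position-tracking analysis you carry out, with the outer and inner case splits interchanged). Your additional observations — that arbitrary child permutations are generated by $\sigma$-steps, that the side conditions of $\4$, $\lambda$, $\J$ are permutation-invariant, and that duplication of a subtree containing the $\sigma$-position forces more than one trailing $\sigma$-step (hence the $\sigma*$ in the conclusion) — are all accurate and in fact supply more detail than the paper does.
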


\begin{proof}
By cases on $\mu$. Each case is proven by induction on the length of the position of $\sigma$ and by cases on the position of $\mu$.
\end{proof}

\begin{corollary}
\label{CorollarySigma}
If ${\tt T} \hookrightarrow^{\sigma *} \circ \hookrightarrow^{\Omega} {\tt S}$, then ${\tt T} \hookrightarrow^{\Omega} \circ \hookrightarrow^{\sigma *} {\tt S}$ for $\Omega$ a list of atomic, replicative, decreasing and modal rewriting rules.
\end{corollary}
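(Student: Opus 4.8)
The plan is to derive Corollary~\ref{CorollarySigma} from Lemma~\ref{PermutationWithRest} by a straightforward double induction. The outer induction is on the length of the list $\Omega$, and within the step there is an inner induction on the number of $\sigma$-steps in the prefix $\hookrightarrow^{\sigma *}$. The intuition is simply that Lemma~\ref{PermutationWithRest} lets us push a single $\sigma$-step past a single non-$\sigma$ rule $\mu$ at the cost of possibly several $\sigma$-steps afterwards; iterating this pushes the whole block of $\sigma$-steps to the far right.

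First I would handle the base case $\Omega = \varnothing$: then ${\tt T} \hookrightarrow^{\sigma *} \circ \hookrightarrow^{\varnothing} {\tt S}$ just says ${\tt T} \hookrightarrow^{\sigma *} {\tt S}$, which is already of the desired form ${\tt T} \hookrightarrow^{\varnothing} \circ \hookrightarrow^{\sigma *} {\tt S}$. For the inductive step, write $\Omega = \mu \smallfrown \hat\Omega$ with $\mu \in \{\rho^+,\rho^-,\pi^+,\pi^-,\4,\lambda,\J\}$ and $\hat\Omega$ again a list of atomic, replicative, decreasing and modal rules. So we have ${\tt T} \hookrightarrow^{\sigma *} {\tt T}' \hookrightarrow^{\mu} \circ \hookrightarrow^{\hat\Omega} {\tt S}$ for some intermediate ${\tt T}'$. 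The key sub-claim is: whenever ${\tt T} \hookrightarrow^{\sigma *} \circ \hookrightarrow^{\mu} {\tt T}''$ with $\mu \neq \sigma$, then ${\tt T} \hookrightarrow^{\mu} \circ \hookrightarrow^{\sigma *} {\tt T}''$. This follows by induction on the number $n$ of $\sigma$-steps: for $n=0$ it is immediate; for the step, split off the last $\sigma$-step as ${\tt T} \hookrightarrow^{\sigma *} \circ \hookrightarrow^{\sigma} \circ \hookrightarrow^{\mu} {\tt T}''$, apply Lemma~\ref{PermutationWithRest} to the final $\hookrightarrow^{\sigma} \circ \hookrightarrow^{\mu}$ to rewrite it as $\hookrightarrow^{\mu} \circ \hookrightarrow^{\sigma *}$, and then apply the inner inductive hypothesis to $\mathtt{T} \hookrightarrow^{\sigma *} \circ \hookrightarrow^{\mu}$ (the number of $\sigma$-steps in the prefix has dropped by one) to get $\hookrightarrow^{\mu} \circ \hookrightarrow^{\sigma *}$, absorbing the trailing $\sigma$-block.

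With this sub-claim, the inductive step of the outer induction goes through: from ${\tt T} \hookrightarrow^{\sigma *} \circ \hookrightarrow^{\mu} {\tt T}''' \hookrightarrow^{\hat\Omega} {\tt S}$ we obtain ${\tt T} \hookrightarrow^{\mu} {\tt T}_0 \hookrightarrow^{\sigma *} {\tt T}''' \hookrightarrow^{\hat\Omega} {\tt S}$ for some ${\tt T}_0$; now apply the outer inductive hypothesis to ${\tt T}_0 \hookrightarrow^{\sigma *} \circ \hookrightarrow^{\hat\Omega} {\tt S}$ to get ${\tt T}_0 \hookrightarrow^{\hat\Omega} \circ \hookrightarrow^{\sigma *} {\tt S}$, and prepend the single $\hookrightarrow^{\mu}$ step to conclude ${\tt T} \hookrightarrow^{\mu \smallfrown \hat\Omega} \circ \hookrightarrow^{\sigma *} {\tt S} = {\tt T} \hookrightarrow^{\Omega} \circ \hookrightarrow^{\sigma *} {\tt S}$.

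I do not expect a genuine obstacle here, since all the real work — the case analysis on $\mu$ and on the positions where $\sigma$ and $\mu$ act, including the delicate situation where the two rewrites overlap and a single $\sigma$ must be replaced by several — has been absorbed into Lemma~\ref{PermutationWithRest}. The only point requiring a little care is bookkeeping the two nested inductions cleanly so that the ``several $\sigma$-steps'' produced by Lemma~\ref{PermutationWithRest} are correctly folded back into the $\hookrightarrow^{\sigma *}$ block; making the inner induction explicitly on the number of $\sigma$-steps (rather than trying to do everything in one induction) is what keeps this transparent.
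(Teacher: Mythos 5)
Your proposal is correct and follows exactly the paper's argument: an outer induction on the length of $\Omega$ combined with an inner induction on the number of $\sigma$-steps, with Lemma~\ref{PermutationWithRest} doing all the substantive work of pushing a single $\sigma$ past a single non-$\sigma$ rule. The only difference is that you spell out the bookkeeping the paper leaves implicit.
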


\begin{proof}
By induction on the length of $\Omega$ and by induction on the number of $\sigma$-steps performed using Lemma \ref{PermutationWithRest}.
\end{proof}

\begin{lemma}
\label{AtomicWithRest}
Let $\rho$ be an atomic rule. If ${\tt T} \hookrightarrow^{\rho} \circ \hookrightarrow^{\mu} {\tt S}$, then ${\tt T} \hookrightarrow^{\mu} \circ \hookrightarrow^{\rho *} {\tt S}$ for $\mu \in \{\pi^{+}, \pi^{-}, \4, \lambda, \J\}$.
\end{lemma}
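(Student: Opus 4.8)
The plan is to mimic the structure already used for the structural rule in Lemma~\ref{PermutationWithRest}: proceed by cases on $\mu \in \{\pi^+, \pi^-, \4, \lambda, \J\}$, and within each case analyse how the positions $\mathbf{k}$ (where $\rho$ acts) and $\mathbf{k'}$ (where $\mu$ acts) relate. The key observation is that an atomic rule only ever modifies the list $\Delta$ of propositional variables at a single node, leaving all children lists $\Gamma$ and all edge labels untouched; conversely, each of the rules $\pi^+, \pi^-, \4, \lambda, \J$ only inspects and modifies children lists and edge labels, and is completely indifferent to the contents of $\Delta$. So in almost every configuration the two rewrites commute on the nose, giving the stronger conclusion ${\tt T} \hookrightarrow^{\mu} \circ \hookrightarrow^{\rho} {\tt S}$ (a single $\rho$-step, which is a special case of $\hookrightarrow^{\rho *}$).

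The first step is the disjoint-position case: if neither of $\mathbf{k}, \mathbf{k'}$ is a prefix of the other, the two transformations act on incomparable subtrees, so by Lemma~\ref{LemmaSubtreeReplacement}(4)--(5) they commute directly. Next, the nested case where $\mathbf{k'}$ is a proper extension of $\mathbf{k}$ (the $\rho$-step is higher): since $\rho$ changes only the $\Delta$-component at node $\mathbf{k}$ and leaves $\Gamma$ — hence the entire subtree hanging below any child — intact, the position $\mathbf{k'}$ and the subtree ${\tt T}|_{\mathbf{k'}}$ are unaffected, so $\mu$ can be performed first at $\mathbf{k'}$ and then $\rho$ at $\mathbf{k}$. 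The symmetric nested case, $\mathbf{k}$ a proper extension of $\mathbf{k'}$ (the $\mu$-step is higher), is the one needing slight care: performing $\mu$ at $\mathbf{k'}$ may duplicate ($\pi^+$), delete ($\pi^-$, $\4$), or relocate ($\J$) the child of node $\mathbf{k'}$ that lies on the path to $\mathbf{k}$. If that child is deleted, then after doing $\mu$ first the position $\mathbf{k}$ no longer exists, but neither does it matter: the node that $\rho$ would have modified has been removed, so ${\tt T} \hookrightarrow^{\mu} {\tt S}$ already, i.e. zero $\rho$-steps suffice — this is exactly why the conclusion is stated with $\hookrightarrow^{\rho *}$ rather than a single step. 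If the child is duplicated by $\pi^+$, then $\rho$ must be applied to both copies, accounting for the possibility of more than one $\rho$-step; and if it is merely relocated (the $\J$-case, where the subtree at $\mathbf{k'}.j$ is appended inside the subtree at $\mathbf{k'}.i$), one tracks $\mathbf{k}$ to its new address and applies a single $\rho$ there.

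Finally, the shared-node case $\mathbf{k} = \mathbf{k'}$: here $\rho$ rewrites ${\tt T}|_\mathbf{k} = \la \Delta ; \Gamma \ra$ to $\la \Delta^{\pm n} ; \Gamma \ra$, while $\mu$ acts on the $\Gamma$-component; since the two act on disjoint components of the same pair, they plainly commute, yielding again a single $\rho$-step afterwards. The main obstacle is bookkeeping in the $\pi^+$ subcase of the second nested configuration, where the single $\rho$-step on the left must become a (possibly two-fold) sequence of $\rho$-steps on the right, and in the $\J$ subcase, where one must verify via Lemma~\ref{LemmaSubtreeReplacement} that the translated position indeed picks out the intended node after the append-and-delete. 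None of this is conceptually hard, but it is exactly the ``meticulous attention to rewritten positions'' the authors warn about, so I would organise the write-up as: (i) disjoint positions; (ii) $\rho$ strictly above $\mu$; (iii) $\mu$ strictly above $\rho$, with sub-bullets for each of $\pi^+, \pi^-, \4, \lambda, \J$; (iv) $\mathbf{k} = \mathbf{k'}$; and in each reduce to the relevant clauses of Lemma~\ref{LemmaSubtreeReplacement}.
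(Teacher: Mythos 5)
Your proposal is correct and follows essentially the same route as the paper: a case split on $\mu$ combined with a positional case analysis on where $\rho$ and $\mu$ act, concluding with zero, one, or two $\rho$-steps exactly as the paper records (two for $\pi^+$ via duplication, zero or one for the decreasing rules, exactly one otherwise). The only small imprecision is classifying $\4$ as pure deletion — it also promotes the grandchild ${\tt S}$, so a $\rho$-step performed inside ${\tt S}$ survives at a relocated address and still requires one $\rho$ afterwards — but this is handled by the same relocation bookkeeping you describe for $\J$.
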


\begin{proof}
By cases on $\mu$. Each case is shown by induction on the length of the position of $\rho$ and by cases on the position of $\mu$. If $\mu$ is $\pi^+$, we prove that either ${\tt T} \hookrightarrow^{\pi^+} \circ \hookrightarrow^{\rho} {\tt S}$ or ${\tt T} \hookrightarrow^{\pi^+} \circ \hookrightarrow^{\rho} \circ \hookrightarrow^{\rho}{\tt S}$ holds (see Figure \ref{fig:Pi+Normalization}). If $\mu$ is a decreasing rewriting rule, then either ${\tt T} \hookrightarrow^{\mu} {\tt S}$ or ${\tt T} \hookrightarrow^{\mu} \circ \hookrightarrow^{\rho} {\tt S}$. Otherwise we show ${\tt T} \hookrightarrow^{\mu} \circ \hookrightarrow^{\rho} {\tt S}$.
\end{proof}

\begin{corollary}
\label{CorollaryAtomic}
Let $\Omega_\rho$ be a list of atomic rewriting rules. If ${\tt T} \hookrightarrow^{\Omega_\rho} \circ \hookrightarrow^{\mu} {\tt S}$, then ${\tt T} \hookrightarrow^{\mu} \circ \hookrightarrow^{\Omega'_\rho} {\tt S}$ for $\mu \in \{\pi^+,\pi^-,\4,\lambda, \J\}$ and $\Omega'_\rho$ a list of atomic rewriting rules.
\end{corollary}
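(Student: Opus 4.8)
The plan is to reduce Corollary~\ref{CorollaryAtomic} to repeated application of Lemma~\ref{AtomicWithRest} by an outer induction on the length of $\Omega_\rho$. I would first dispose of the base case: if $\Omega_\rho = \varnothing$, then ${\tt T} \hookrightarrow^{\mu} {\tt S}$ already has the required shape, taking $\Omega'_\rho = \varnothing$. For the inductive step, write $\Omega_\rho = \rho \smallfrown \hat\Omega_\rho$ with $\rho$ a single atomic rule, so that ${\tt T} \hookrightarrow^{\rho} \circ \hookrightarrow^{\hat\Omega_\rho} \circ \hookrightarrow^{\mu} {\tt S}$, i.e. there is some intermediate $\hat{\tt T}$ with ${\tt T} \hookrightarrow^{\rho} \hat{\tt T}$ and $\hat{\tt T} \hookrightarrow^{\hat\Omega_\rho} \circ \hookrightarrow^{\mu} {\tt S}$.

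Next I would apply the inductive hypothesis to the tail: from $\hat{\tt T} \hookrightarrow^{\hat\Omega_\rho} \circ \hookrightarrow^{\mu} {\tt S}$ we obtain a list $\hat\Omega'_\rho$ of atomic rules such that $\hat{\tt T} \hookrightarrow^{\mu} \circ \hookrightarrow^{\hat\Omega'_\rho} {\tt S}$. Composing with the first step gives ${\tt T} \hookrightarrow^{\rho} \hat{\tt T} \hookrightarrow^{\mu} \circ \hookrightarrow^{\hat\Omega'_\rho} {\tt S}$, that is, ${\tt T} \hookrightarrow^{\rho} \circ \hookrightarrow^{\mu} \circ \hookrightarrow^{\hat\Omega'_\rho} {\tt S}$. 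Since $\mu \in \{\pi^+,\pi^-,\4,\lambda,\J\}$, Lemma~\ref{AtomicWithRest} applies to the initial segment ${\tt T} \hookrightarrow^{\rho} \circ \hookrightarrow^{\mu} {\tt S}'$ (where ${\tt S}'$ is the tree reached after the $\mu$-step): it yields a list $\Omega''_\rho$ of atomic rules with ${\tt T} \hookrightarrow^{\mu} \circ \hookrightarrow^{\Omega''_\rho} {\tt S}'$. Concatenating the remaining $\hat\Omega'_\rho$-steps, we get ${\tt T} \hookrightarrow^{\mu} \circ \hookrightarrow^{\Omega''_\rho} \circ \hookrightarrow^{\hat\Omega'_\rho} {\tt S}$, so setting $\Omega'_\rho := \Omega''_\rho \smallfrown \hat\Omega'_\rho$ (a list of atomic rules) completes the step.

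The one subtlety worth flagging is the bookkeeping around the "intermediate" trees: Lemma~\ref{AtomicWithRest} is stated for a composition $\hookrightarrow^{\rho} \circ \hookrightarrow^{\mu}$ terminating in a single tree, so one must be careful to apply it to the correct prefix of the rewriting sequence (the one ending just after the single $\mu$-step produced by the inductive hypothesis), not to the whole sequence down to ${\tt S}$. Once the decomposition ${\tt T} \hookrightarrow^{\rho} \circ \hookrightarrow^{\mu} \circ \hookrightarrow^{\hat\Omega'_\rho} {\tt S}$ is in place this is routine. I do not expect a genuine obstacle here: all the real work — the case analysis on $\mu$ and on the positions, including the possible doubling of an atomic step when $\mu = \pi^+$ and the possible absorption of $\rho$ when $\mu$ is decreasing — has already been carried out in Lemma~\ref{AtomicWithRest}, so this corollary is purely a matter of threading that lemma through an induction on $|\Omega_\rho|$.
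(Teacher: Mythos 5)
Your proof is correct and is exactly the argument the paper intends: the paper's own proof reads ``by an easy induction on the length of $\Omega_\rho$ using Lemma~\ref{AtomicWithRest},'' which is precisely your outer induction with the head atomic step commuted past $\mu$ via that lemma. Your careful handling of the intermediate tree after the single $\mu$-step is the right way to make the one-line proof precise.
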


\begin{proof}
By an easy induction on the length of $\Omega_\rho$ using Lemma \ref{AtomicWithRest}.
\end{proof}

\begin{figure}[H]
    \centering
\begin{subfigure}{.7\textwidth}
\begin{subfigure}{.3\textwidth}
    \centering
    \begin{tikzpicture}   
    \node[state][label=right: \hspace{0.8cm}$\hookrightarrow^{\mu}$] (a) at (0,0) {} ;
    \node[emptystate] (b) [below left =of a] {};
    \node[emptystate] (c) [below right=of a] {${\tt S}$};
    \draw[dashed,-] (a) edge[dashed] (b); 
    \draw[-latex] (a) -- (c) node[fill=white,inner sep=2pt,midway] {{\small $\alpha$}};
    \end{tikzpicture} 
\end{subfigure}
\begin{subfigure}{.3\textwidth}
    \centering
    \begin{tikzpicture}
    \node[state][label=right: \hspace{1cm}$\hookrightarrow^{\pi^+}$] (a) at (0,0) {} ;
    \node[emptystate] (b) [below left =of a] {};
    \node[emptystate] (c) [below right=of a] {$\hat{\tt S}$};
    \draw[dashed,-] (a) edge[dashed] (b); 
    \draw[-latex] (a) -- (c) node[fill=white,inner sep=2pt,midway] {{\small $\alpha$}};
    \end{tikzpicture} 
\end{subfigure}
\begin{subfigure}{.3\textwidth}
    \centering
    \begin{tikzpicture}        
    \node[state] (a) at (0,0) {} ;
    \node[emptystate] (b) [below left =of a] {$\hat{\tt S}$};
    \node[emptystate] (c) [below right=of a] {$\hat{\tt S}$};
    \node[emptystate] (d) [below =of a] {};
    \draw[dashed,-] (a) edge[dashed] (d); 
    \draw[-latex] (a) -- (b) node[fill=white,inner sep=2pt,midway] {{\small $\alpha$}};
    \draw[-latex] (a) -- (c) node[fill=white,inner sep=2pt,midway] {{\small $\alpha$}};
    \end{tikzpicture} 
\end{subfigure}
\caption{${\tt T} \hookrightarrow^{\mu} \circ \hookrightarrow^{\pi^+} {\tt S}$ (for ${\tt S} \hookrightarrow^{\mu} \hat{\tt S}$).}
\end{subfigure}
\begin{subfigure}{1.1\textwidth}
\begin{subfigure}{.2\textwidth}
    \centering
    \begin{tikzpicture}        
    \node[state][label=right: \hspace{1cm}$\hookrightarrow^{\pi^+}$] (a) at (0,0) {} ;
    \node[emptystate] (b) [below left =of a] {};
    \node[emptystate] (c) [below right=of a] {${\tt S}$};
    \draw[dashed,-] (a) edge[dashed] (b); 
    \draw[-latex] (a) -- (c) node[fill=white,inner sep=2pt,midway] {{\small $\alpha$}};
    \end{tikzpicture} 
\end{subfigure}
\begin{subfigure}{.2\textwidth}
    \centering
    \begin{tikzpicture}        
    \node[state][label=right: \hspace{1cm}$\hookrightarrow^{\mu}$] (a) at (0,0) {} ;
    \node[emptystate] (b) [below left =of a] {${\tt S}$};
    \node[emptystate] (c) [below right=of a] {${\tt S}$};
    \node[emptystate] (d) [below =of a] {};
    \draw[dashed,-] (a) edge[dashed] (d); 
    \draw[-latex] (a) -- (b) node[fill=white,inner sep=2pt,midway] {{\small $\alpha$}};
    \draw[-latex] (a) -- (c) node[fill=white,inner sep=2pt,midway] {{\small $\alpha$}};
    \end{tikzpicture} 
\end{subfigure}
\begin{subfigure}{.2\textwidth}
    \centering
    \begin{tikzpicture}        
    \node[state][label=right: \hspace{1cm}$\hookrightarrow^{\mu}$] (a) at (0,0) {} ;
    \node[emptystate] (b) [below left =of a] {$\hat{\tt S}$};
    \node[emptystate] (c) [below right=of a] {${\tt S}$};
    \node[emptystate] (d) [below =of a] {};
    \draw[dashed,-] (a) edge[dashed] (d); 
    \draw[-latex] (a) -- (b) node[fill=white,inner sep=2pt,midway] {{\small $\alpha$}};
    \draw[-latex] (a) -- (c) node[fill=white,inner sep=2pt,midway] {{\small $\alpha$}};
    \end{tikzpicture} 
\end{subfigure}
\begin{subfigure}{.2\textwidth}
    \centering
    \begin{tikzpicture}        
    \node[state] (a) at (0,0) {} ;
    \node[emptystate] (b) [below left =of a] {$\hat{\tt S}$};
    \node[emptystate] (c) [below right=of a] {$\hat{\tt S}$};
    \node[emptystate] (d) [below =of a] {};
    \draw[dashed,-] (a) edge[dashed] (d); 
    \draw[-latex] (a) -- (b) node[fill=white,inner sep=2pt,midway] {{\small $\alpha$}};
    \draw[-latex] (a) -- (c) node[fill=white,inner sep=2pt,midway] {{\small $\alpha$}};
    \end{tikzpicture} 
\end{subfigure}
\caption{${\tt T} \hookrightarrow^{\pi^+} \circ \hookrightarrow^{\mu} \circ \hookrightarrow^{\mu} {\tt S}$.}
\end{subfigure}
\caption{${\tt T} \hookrightarrow^{\mu} \circ \hookrightarrow^{\pi^+} {\tt S} \Longrightarrow {\tt T} \hookrightarrow^{\pi^+} \circ \hookrightarrow^{\mu} \circ \hookrightarrow^{\mu} {\tt S}, \mu \in \mathscr{R} \setminus \{\pi^+\}$.}
\label{fig:Pi+Normalization}
\end{figure}

\begin{proposition}
\label{DecreasingWithModal}
Let $\delta$ be a decreasing rule. If ${\tt T} \hookrightarrow^{\delta} \circ \hookrightarrow^{\mu} {\tt S}$, then ${\tt T} \hookrightarrow^{\mu *} \circ \hookrightarrow^{\delta *} {\tt S}$ for $\mu \in \{\pi^{+}, \lambda, \J\}$. 
\end{proposition}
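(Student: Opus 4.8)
The plan is to establish the proposition by a case analysis on the modal rule $\mu \in \{\pi^{+}, \lambda, \J\}$ and, inside each case, on the decreasing rule $\delta \in \{\pi^{-}, \4\}$, proceeding as in Lemmas~\ref{PermutationWithRest} and~\ref{AtomicWithRest}: I fix the position $\mathbf{p} \in {\sf Pos}({\tt T})$ at which $\delta$ is performed and the position $\mathbf{q}$ at which $\mu$ is performed afterwards, argue by induction on $|\mathbf{p}|$, and within that by cases on how $\mathbf{q}$ relates to $\mathbf{p}$. When $\mathbf{p}$ and $\mathbf{q}$ address disjoint subtrees, or when both descend into the same child of the root, the claim follows either from a trivial commutation or from the induction hypothesis together with the Inside Rewriting Property (Proposition~\ref{InsideRewriting}); the real work is thus confined to the case $\mathbf{p} = \epsilon$, where $\delta$ acts at the root ${\tt T} = \la \Delta ; \Gamma \ra$ and $\mathbf{q}$ ranges over the positions of the intermediate tree.

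In the non-interfering subcases the two steps commute, the only subtlety being index bookkeeping: since $\delta$ modifies the list $\Gamma$ (deleting one entry for $\pi^{-}$, replacing the $i$th entry for $\4$), an index used by $\mu$ in the intermediate tree must be translated back to the corresponding index in $\Gamma$, and after applying $\mu$ first the index used by $\delta$ must likewise be recomputed; one checks that in these subcases the side conditions of $\mu$ (namely $\alpha > \beta$, $i \neq j$, and the shape requirements of $\4$ and $\J$) depend only on data left untouched by $\delta$, so they persist under reordering. Two families of genuinely interfering subcases remain. First, when $\mu = \pi^{+}$ duplicates a child lying on the path to, or above, the redex of $\delta$, performing $\pi^{+}$ first creates two copies of that subtree, each of which must then be corrected by its own application of $\delta$, yielding ${\tt T} \hookrightarrow^{\pi^{+}} \circ \hookrightarrow^{\delta} \circ \hookrightarrow^{\delta} {\tt S}$; this is exactly why the statement allows $\hookrightarrow^{\mu *}$ and $\hookrightarrow^{\delta *}$ instead of single steps, paralleling Figure~\ref{fig:Pi+Normalization}. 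Second, when $\mu$ acts on, or immediately above, the subtree ${\tt S}$ that a $\4$-step lifts into place, I relocate the $\mu$-step to the deeper original occurrence of ${\tt S}$ inside ${\tt T}$ and perform the $\4$-step afterwards; for $\mu \in \{\lambda, \J\}$ this also forces one further application of $\mu$ on the intermediate node, so that the two edges collapsed by $\4$ still carry equal labels (for $\lambda$) or so that the relocated child is reattached in the right place (for $\J$), again at the cost of only boundedly many extra $\mu$- and $\4$-steps.

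I expect the main obstacle to be precisely this second family, that is, the interaction of $\delta = \4$ with $\mu \in \{\lambda, \J\}$ when both act at the same node and $\mu$ involves the child that $\4$ lifts: one must check that after relabelling the inner edge (for $\lambda$), or after the auxiliary $\J$-step that pushes the moved child down alongside ${\tt S}$ (for $\J$), the hypotheses of the $\4$-rule — matching edge labels and correct indices — are still met, and that the tree ultimately produced is literally ${\tt T}$ with the intended subtree substituted at $\mathbf{p}$. This is where the bookkeeping with the list operations $\Gamma[\cdot]_i$ and $\Gamma^{-j}$ and the position calculus of Lemma~\ref{LemmaSubtreeReplacement} has to be carried out explicitly, though each individual verification is a routine computation. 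The remaining configurations ($\delta = \pi^{-}$ against each $\mu$, and $\delta = \4$ against $\mu = \pi^{+}$ or against a $\mu$ acting at an unrelated position) are settled by the commutation-with-bookkeeping argument sketched above.
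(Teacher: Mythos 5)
Your proposal is correct and follows essentially the same route as the paper: induction on the length of the position of $\delta$ with a case split on the position of $\mu$, trivial commutation (with index bookkeeping) in the non-interfering cases, and exactly the paper's interfering patterns in the remaining ones — ${\tt T} \hookrightarrow^{\pi^+} \circ \hookrightarrow^{\delta} \circ \hookrightarrow^{\delta} {\tt S}$ when $\pi^+$ duplicates the $\delta$-redex, and $\lambda\lambda\4$ resp.\ $\J\J\4$ when $\4$ interacts with $\lambda$ resp.\ $\J$ at the same node. No substantive difference from the paper's argument.
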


\begin{proof}
By induction on the length of the position of $\delta$ and by cases on the length of the position of $\mu$. 

Let $\delta$ be $\pi^-$. If $\mu$ is $\pi^+$, we show that either ${\tt T} \hookrightarrow^{\pi^+} \circ \hookrightarrow^{\pi^-} {\tt S}$ or ${\tt T} \hookrightarrow^{\pi^+} \circ \hookrightarrow^{\pi^-} \circ \hookrightarrow^{\pi^-}{\tt S}$ holds (see Figure \ref{fig:Pi+Normalization}). Otherwise we prove ${\tt T} \hookrightarrow^{\mu} \circ \hookrightarrow^{\pi^-} {\tt S}$.

Let $\delta$ be $\4$. If $\mu$ is $\pi^{+}$ we show that either ${\tt T} \hookrightarrow^{\pi^+} \circ \hookrightarrow^{\4} {\tt S}$ or ${\tt T} \hookrightarrow^{\pi^+} \circ \hookrightarrow^{\4} \circ \hookrightarrow^{\4}{\tt S}$ (see Figure \ref{fig:Pi+Normalization}). For $\mu$ being $\lambda$ we prove that either ${\tt T} \hookrightarrow^{\lambda} \circ \hookrightarrow^{\4} {\tt S}$ or ${\tt T} \hookrightarrow^{\lambda} \circ \hookrightarrow^{\lambda} \circ \hookrightarrow^{\4} {\tt S}$ (see Figure \ref{fig:4LambdaNormalization}). Finally, for $\mu$ being $\J$ we show that either ${\tt T} \hookrightarrow^{J} \circ \hookrightarrow^{\4} {\tt S}$ or ${\tt T} \hookrightarrow^{J} \circ \hookrightarrow^{J} \circ \hookrightarrow^{\4} {\tt S}$ (see Figure \ref{fig:4JNormalization}).
\end{proof}

\begin{figure}[t]
    \centering
\begin{subfigure}{1\textwidth}
\begin{subfigure}{.4\textwidth}
\centering
\begin{subfigure}{.2\textwidth}
    \centering
    \begin{tikzpicture}   
     \node[state] (b) at (0,0) {};
     \node[state][label=right: \hspace{0.1cm}$\hookrightarrow^{\4}$] (d) [below =of b] {};
     \node[state] (f) [below =of d] {};
     \draw[-latex] (b) -- (d) node[fill=white,inner sep=2pt,midway] {$\alpha$}; 
     \draw[-latex] (d) -- (f) node[fill=white,inner sep=2pt,midway] {$\alpha$};
    \end{tikzpicture} 
\end{subfigure}
\hspace{0.2cm}
\begin{subfigure}{.2\textwidth}
    \centering
    \begin{tikzpicture}   
    \node[state][label=right: \hspace{0.2cm}$\hookrightarrow^{\lambda}$] (b) at (0,0) {};
    \node[state] (d) [below =of b] {};
    \draw[-latex] (b) -- (d) node[fill=white,inner sep=2pt,midway] {$\alpha$}; 
    \end{tikzpicture} 
\end{subfigure}
\hspace{0.1cm}
\begin{subfigure}{.2\textwidth}
    \centering
    \begin{tikzpicture}   
    \node[state] (b) at (0,0) {};
    \node[state] (d) [below =of b] {};
    \draw[-latex] (b) -- (d) node[fill=white,inner sep=2pt,midway] {$\beta$}; 
    \end{tikzpicture} 
\end{subfigure}
\caption{${\tt T} \hookrightarrow^{\4} \circ \hookrightarrow^{\lambda} {\tt S}$.}
\end{subfigure}
\begin{subfigure}{0.6\textwidth}
\centering
\begin{subfigure}{.15\textwidth}
    \centering
    \begin{tikzpicture}   
     \node[state] (b) at (0,0) {};
     \node[state][label=right: \hspace{0.1cm}$\hookrightarrow^{\lambda}$] (d) [below =of b] {};
     \node[state] (f) [below =of d] {};
     \draw[-latex] (b) -- (d) node[fill=white,inner sep=2pt,midway] {$\alpha$}; 
     \draw[-latex] (d) -- (f) node[fill=white,inner sep=2pt,midway] {$\alpha$};
    \end{tikzpicture} 
\end{subfigure}
\hspace{0.1cm}
\begin{subfigure}{.15\textwidth}
    \centering
    \begin{tikzpicture}   
     \node[state] (b) at (0,0) {};
     \node[state][label=right: \hspace{0.1cm}$\hookrightarrow^{\lambda}$] (d) [below =of b] {};
     \node[state] (f) [below =of d] {};
     \draw[-latex] (b) -- (d) node[fill=white,inner sep=2pt,midway] {$\beta$}; 
     \draw[-latex] (d) -- (f) node[fill=white,inner sep=2pt,midway] {$\alpha$};
    \end{tikzpicture} 
\end{subfigure}
\hspace{0.1cm}
\begin{subfigure}{.15\textwidth}
    \centering
    \begin{tikzpicture}   
     \node[state] (b) at (0,0) {};
     \node[state][label=right: \hspace{0.2cm}$\hookrightarrow^{\4}$] (d) [below =of b] {};
     \node[state] (f) [below =of d] {};  
     \draw[-latex] (b) -- (d) node[fill=white,inner sep=2pt,midway] {$\beta$}; 
     \draw[-latex] (d) -- (f) node[fill=white,inner sep=2pt,midway] {$\beta$};
    \end{tikzpicture} 
\end{subfigure}
\hspace{0.1cm}
\begin{subfigure}{.08\textwidth}
    \centering
    \begin{tikzpicture}   
    \node[state] (b) at (0,0) {};
    \node[state] (d) [below =of b] {};
    \draw[-latex] (b) -- (d) node[fill=white,inner sep=2pt,midway] {$\beta$}; 
    \end{tikzpicture} 
\end{subfigure}
\caption{${\tt T} \hookrightarrow^{\lambda} \circ \hookrightarrow^{\lambda} \circ \hookrightarrow^{\4} {\tt S}$.}
\end{subfigure}
\end{subfigure}
\caption{${\tt T} \hookrightarrow^{\4} \circ \hookrightarrow^{\lambda} {\tt S} \Longrightarrow {\tt T} \hookrightarrow^{\lambda} \circ \hookrightarrow^{\lambda} \circ \hookrightarrow^{\4} {\tt S}$.}
\label{fig:4LambdaNormalization}
\end{figure}

\begin{corollary}
\label{CorollaryDecreasing}
Let $\delta$ and $\theta$ be decreasing and modal rewriting rules respectively, and $\Omega_\delta$ be a list of decreasing rewriting rules.
\begin{enumerate}
    \item If ${\tt T} \hookrightarrow^{\Omega_\delta} \circ \hookrightarrow^{\pi^+} {\tt S}$, then ${\tt T} \hookrightarrow^{\pi^+} \circ \hookrightarrow^{\Omega'_\delta} {\tt S}$ for $\Omega'_\delta$ a list of decreasing rules.
    \item If ${\tt T} \hookrightarrow^{\delta} \circ \hookrightarrow^{\theta *} {\tt S}$, then ${\tt T} \hookrightarrow^{\theta *} \circ \hookrightarrow^{\delta} {\tt S}$.
    \item If ${\tt T} \hookrightarrow^{\Omega_\delta} \circ \hookrightarrow^{\mu *} {\tt S}$ for $\mu \in \{\pi^+, \lambda, \J\}$, then ${\tt T} \hookrightarrow^{\mu *} \circ \hookrightarrow^{\Omega'_\delta} {\tt S}$ for $\Omega'_\delta$ a list of decreasing rewriting rules.
\end{enumerate}
\end{corollary}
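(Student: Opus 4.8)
The plan is to obtain all three items by bootstrapping Proposition~\ref{DecreasingWithModal} from single steps to lists by suitable inductions, keeping careful track of which block of rules sits where. Beyond the form in which Proposition~\ref{DecreasingWithModal} is stated, I will use two finer facts that are evident from its proof and from Figures~\ref{fig:Pi+Normalization} and~\ref{fig:4LambdaNormalization}: when $\mu = \pi^+$, commuting a decreasing rule $\delta$ past $\pi^+$ produces \emph{exactly one} $\pi^+$-step on the left; and when $\mu$ is a modal rule, \emph{exactly one} decreasing step remains on the right, of the same kind as the original $\delta$. These ``single-copy'' refinements are exactly what is needed to reproduce the shapes ${\tt T}\hookrightarrow^{\pi^+}\circ\hookrightarrow^{\Omega'_\delta}{\tt S}$ and ${\tt T}\hookrightarrow^{\theta *}\circ\hookrightarrow^{\delta}{\tt S}$ in the statement.

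For item~1 I would induct on $|\Omega_\delta|$. The empty case is trivial. If $\Omega_\delta = \delta\smallfrown\hat\Omega_\delta$, decompose the given rewrite as ${\tt T}\hookrightarrow^{\delta}{\tt T}_1\hookrightarrow^{\hat\Omega_\delta}\circ\hookrightarrow^{\pi^+}{\tt S}$. The induction hypothesis, applied to ${\tt T}_1$, yields ${\tt T}_1\hookrightarrow^{\pi^+}{\tt T}_2\hookrightarrow^{\hat\Omega'_\delta}{\tt S}$ with $\hat\Omega'_\delta$ a list of decreasing rules, so ${\tt T}\hookrightarrow^{\delta}{\tt T}_1\hookrightarrow^{\pi^+}{\tt T}_2$. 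Proposition~\ref{DecreasingWithModal} (case $\mu=\pi^+$) turns this prefix into ${\tt T}\hookrightarrow^{\pi^+}\circ\hookrightarrow^{\delta *}{\tt T}_2$, and concatenating the resulting decreasing block with $\hat\Omega'_\delta$ completes the step. For item~2 I would induct on the number of $\theta$-steps. The base case is immediate. For the inductive step write ${\tt T}\hookrightarrow^{\delta}{\tt T}_1\hookrightarrow^{\theta}{\tt T}_1'\hookrightarrow^{\theta *}{\tt S}$; since $\theta$ is modal, Proposition~\ref{DecreasingWithModal} applied to ${\tt T}\hookrightarrow^{\delta}\circ\hookrightarrow^{\theta}{\tt T}_1'$ gives ${\tt T}\hookrightarrow^{\theta *}{\tt T}''\hookrightarrow^{\delta}{\tt T}_1'$ with a single surviving $\delta$; the induction hypothesis applied to ${\tt T}''\hookrightarrow^{\delta}{\tt T}_1'\hookrightarrow^{\theta *}{\tt S}$ yields ${\tt T}''\hookrightarrow^{\theta *}\circ\hookrightarrow^{\delta}{\tt S}$, and composing finishes the step.

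Item~3 is then a packaging of items~1 and~2, by cases on $\mu$. If $\mu = \pi^+$, induct on the number of $\pi^+$-steps in $\hookrightarrow^{\pi^+ *}$: peel off the first $\pi^+$ to get ${\tt T}\hookrightarrow^{\Omega_\delta}\circ\hookrightarrow^{\pi^+}{\tt T}_2\hookrightarrow^{\pi^+ *}{\tt S}$, move that $\pi^+$ in front of $\Omega_\delta$ by item~1 (producing a fresh decreasing list), and apply the induction hypothesis to the remainder. If $\mu$ is a modal rule, induct on $|\Omega_\delta|$: strip the leading decreasing rule $\delta$, apply the induction hypothesis to the shorter list to reach ${\tt T}\hookrightarrow^{\delta}\circ\hookrightarrow^{\mu *}\circ\hookrightarrow^{\Omega'_\delta}{\tt S}$, and use item~2 to commute $\delta$ past $\hookrightarrow^{\mu *}$, which reinstates $\delta$ at the front of the decreasing block $\delta\smallfrown\Omega'_\delta$.

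Essentially all of this is bookkeeping about which block of rules precedes which, carried out uniformly by induction; the main obstacle is not difficulty but discipline. The one point that genuinely needs care is that items~1--3 are phrased with a single $\pi^+$ (respectively a single decreasing rule) rather than a star in those slots, so the argument must invoke the sharper, single-copy form of Proposition~\ref{DecreasingWithModal} read off from its proof, not merely its as-stated conclusion; otherwise stars would proliferate. A secondary subtlety, also settled by the case analysis in Proposition~\ref{DecreasingWithModal}, is that the decreasing step produced on the right there is genuinely of kind $\delta$, so that ``list of decreasing rules'' is preserved throughout.
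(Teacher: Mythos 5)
Your proposal is correct and follows essentially the same route as the paper: induction on $|\Omega_\delta|$ for item~1, induction on the number of modal steps for item~2, and a case split on $\mu$ reducing item~3 to items~1 and~2, all powered by Proposition~\ref{DecreasingWithModal}. You also rightly observe that the statement's single-$\pi^+$ and single-$\delta$ forms require the sharper ``one copy survives'' information from the case analysis in the proof of Proposition~\ref{DecreasingWithModal} rather than its as-stated $\hookrightarrow^{\mu *}\circ\hookrightarrow^{\delta *}$ conclusion, a point the paper leaves implicit.
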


\begin{proof}
The first statement follows by induction on the length of $\Omega_\delta$ and Proposition \ref{DecreasingWithModal}. The second one is shown by induction on the number of modal rewriting rules applied and Proposition \ref{DecreasingWithModal}. The third result follows by cases on $\mu$. For $\mu \in \{\lambda, \J\}$, we conclude by induction on the length of $\Omega_\delta$ and the second statement. Otherwise, we conclude by induction on the number of $\mu$-rules applied and the first statement.
\end{proof}

\begin{proposition}
\label{ChildDupModal}
Let $\theta$ be a modal rewriting rule. If ${\tt T} \hookrightarrow^{\theta} \circ \hookrightarrow^{\pi^+} {\tt S}$, then ${\tt T} \hookrightarrow^{\pi^+ *} \circ \hookrightarrow^{\theta *} \circ \hookrightarrow^{\sigma *} {\tt S}$.
\end{proposition}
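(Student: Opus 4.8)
The plan is to commute the replicative step past the modal step by a case analysis on the relative positions at which the two rules act. Write ${\tt T} \hookrightarrow^{\theta} \hat{\tt T} \hookrightarrow^{\pi^+} {\tt S}$, where $\theta$ (either $\lambda$ or $\J$) is performed at position $\mathbf{k} \in {\sf Pos}({\tt T})$ and $\pi^+$ at position $\mathbf{l} \in {\sf Pos}(\hat{\tt T})$. I would argue by cases on $\theta$, and in each case by induction on the length of $\mathbf{k}$ with a subsidiary case distinction on $\mathbf{l}$, exactly as in Lemmas \ref{PermutationWithRest} and \ref{AtomicWithRest}. When $\mathbf{k}$ and $\mathbf{l}$ are incomparable the two transformations act on disjoint subtrees and commute outright, and when $\mathbf{l}$ is a nonempty proper prefix of $\mathbf{k}$ one localises to the common first child and applies the induction hypothesis, lifting the resulting normal sequence by the Inside Rewriting Property (Proposition \ref{InsideRewriting}) and Lemma \ref{LemmaSubtreeReplacement}, which preserves the order and kinds of the rules. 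When $\mathbf{l} = \epsilon \neq \mathbf{k}$, i.e.\ $\pi^+$ duplicates a child $i'$ of the root whose subtree contains the $\theta$-modification, one performs $\pi^+$ first in ${\tt T}$ and then replays $\theta$ inside each of the two copies, giving ${\tt T} \hookrightarrow^{\pi^+} \circ \hookrightarrow^{\theta} \circ \hookrightarrow^{\theta} {\tt S}$. The remaining, substantive case is $\mathbf{l} = \mathbf{k}\smallfrown\mathbf{m}$.

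For $\theta = \lambda$ the argument is short: the $\lambda$-rule only relabels the edge to the $i$th child of $\hat{\tt T}|_\mathbf{k}$ and leaves every subtree of $\hat{\tt T}|_\mathbf{k}$ equal to the corresponding subtree of ${\tt T}|_\mathbf{k}$. Hence if $\pi^+$ acts strictly below $\mathbf{k}$, or at $\mathbf{k}$ but duplicating a child other than the $i$th, the same $\pi^+$ is available in ${\tt T}$ and commutes, so ${\tt T} \hookrightarrow^{\pi^+} \circ \hookrightarrow^{\lambda} {\tt S}$; and if $\pi^+$ duplicates exactly the relabelled $i$th child at $\mathbf{k}$, one duplicates that child first in ${\tt T}$ and relabels both copies, so ${\tt T} \hookrightarrow^{\pi^+} \circ \hookrightarrow^{\lambda} \circ \hookrightarrow^{\lambda} {\tt S}$. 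No structural step ever arises, so $\Omega_\sigma$ is empty in this case.

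For $\theta = \J$ — the interesting case — recall that $\J$ at $\mathbf{k}$, with $\#_i\Gamma = (\alpha, \la\tilde\Delta;\tilde\Gamma\ra)$, $\#_j\Gamma = (\beta,{\tt S}_0)$ and $\alpha > \beta$, appends $(\beta,{\tt S}_0)$ as the new last child of the $i$th child and deletes the $j$th child of $\Gamma$. If $\pi^+$ acts on a part of $\hat{\tt T}$ that $\J$ copied verbatim — a child of the $\J$-node other than the $i$th, the node $\tilde\Delta$, a subtree sitting inside $\tilde\Gamma$, a child of the $i$th child belonging to $\tilde\Gamma$, or anything inside the moved copy of ${\tt S}_0$ (which in ${\tt T}$ is the original $j$th child) — I perform the corresponding $\pi^+$ on ${\tt T}$ and replay $\J$, tracking the index shift caused by $\pi^+$ prepending its duplicate and by $\J$ deleting $\#_j\Gamma$; these commute, possibly after the bookkeeping. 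If $\pi^+$ duplicates the modified $i$th child at the $\J$-node, I duplicate both $\#_i\Gamma$ and $\#_j\Gamma$ in ${\tt T}$ and apply $\J$ twice, each application consuming one copy of $(\beta,{\tt S}_0)$, obtaining ${\tt T} \hookrightarrow^{\pi^+} \circ \hookrightarrow^{\pi^+} \circ \hookrightarrow^{\J} \circ \hookrightarrow^{\J} {\tt S}$ with no structural step. The one configuration that forces a structural tail is when $\pi^+$ duplicates the \emph{freshly appended} child $(\beta,{\tt S}_0)$ inside the $i$th child: then ${\tt S}|_{\mathbf{k}\smallfrown i}$ has children $(\beta,{\tt S}_0)\smallfrown\tilde\Gamma\smallfrown(\beta,{\tt S}_0)$, and since $\pi^+$ can only prepend an \emph{existing} child, that front copy cannot be produced before $\J$ acts. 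Instead I duplicate $\#_j\Gamma$ once in ${\tt T}$ and apply $\J$ twice — the first consuming the duplicate, the second the original $(\beta,{\tt S}_0)$ — which appends two copies of $(\beta,{\tt S}_0)$ at the end of $\tilde\Gamma$; the resulting tree agrees with ${\tt S}$ except that the children of the node at position $\mathbf{k}\smallfrown i$ read $\tilde\Gamma\smallfrown(\beta,{\tt S}_0)\smallfrown(\beta,{\tt S}_0)$ instead of $(\beta,{\tt S}_0)\smallfrown\tilde\Gamma\smallfrown(\beta,{\tt S}_0)$, a difference that a sequence of $\sigma$-steps removes, since transpositions generate every permutation of a node's children and Proposition \ref{InsideRewriting} lifts these local permutations to the whole tree. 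This is precisely the $\hookrightarrow^{\sigma*}$ tail in the statement.

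I expect the main obstacle to be this last configuration together with the index bookkeeping throughout the $\J$ case: one must verify that the two replayed $\J$-applications are legitimate (the side condition $\alpha > \beta$ survives because the relevant edge labels are never touched), that every position invoked remains valid in the intermediate trees after the prepending $\pi^+$-steps and the deletions, and — most delicately — that in the structural case the reconstructed tree really does coincide with ${\tt S}$ up to a permutation of a single node's children, so that finitely many $\sigma$-steps suffice to bridge them. The simplified diagrams in the style of Figures \ref{fig:Pi+Normalization} and \ref{fig:4JNormalization} would be used to make the two nontrivial subcases transparent.
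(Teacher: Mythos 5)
Your proposal is correct and follows essentially the same route as the paper: induction on the position of $\theta$ with a case analysis on the position of $\pi^+$, arriving at exactly the same output shapes (${\hookrightarrow^{\pi^+}}\circ{\hookrightarrow^{\lambda}}$ or ${\hookrightarrow^{\pi^+}}\circ{\hookrightarrow^{\lambda}}\circ{\hookrightarrow^{\lambda}}$ for $\lambda$, and the four patterns ${\hookrightarrow^{\pi^+}}\circ{\hookrightarrow^{\J}}$, ${\hookrightarrow^{\pi^+}}\circ{\hookrightarrow^{\J}}\circ{\hookrightarrow^{\J}}$, ${\hookrightarrow^{\pi^+}}\circ{\hookrightarrow^{\pi^+}}\circ{\hookrightarrow^{\J}}\circ{\hookrightarrow^{\J}}$ and ${\hookrightarrow^{\pi^+}}\circ{\hookrightarrow^{\J}}\circ{\hookrightarrow^{\J}}\circ{\hookrightarrow^{\sigma}}$ for $\J$), including the key observation that the $\sigma$-tail arises precisely when $\pi^+$ duplicates the freshly appended child. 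Your write-up is in fact more detailed than the paper's sketch, which merely lists these outcomes and points to Figures \ref{fig:Pi+Normalization}, \ref{fig:JPi+1Normalization} and \ref{fig:JPi+2Normalization}.
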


\begin{proof}
By induction on the length of the position of $\theta$ and by cases on the length of the position of $\pi^+$. If $\theta$ is $\lambda$, we show that either ${\tt T} \hookrightarrow^{\pi^+} \circ \hookrightarrow^{\lambda} {\tt S}$ or ${\tt T} \hookrightarrow^{\pi^+} \circ \hookrightarrow^{\lambda} \circ \hookrightarrow^{\lambda}{\tt S}$ holds (see Figure \ref{fig:Pi+Normalization}). For $\theta$ being $\J$, we prove that either ${\tt T} \hookrightarrow^{ \pi^+} \circ \hookrightarrow^{\J} {\tt S}$ or ${\tt T} \hookrightarrow^{\pi^+} \circ \hookrightarrow^{\J} \circ \hookrightarrow^{\J} {\tt S}$ (see Figure \ref{fig:Pi+Normalization}) or ${\tt T} \hookrightarrow^{ \pi^+} \circ \hookrightarrow^{ \pi^+} \circ \hookrightarrow^{\J} \circ \hookrightarrow^{\J} {\tt S}$ (see Figure \ref{fig:JPi+1Normalization}) or ${\tt T} \hookrightarrow^{\pi^+} \circ \hookrightarrow^{\J} \circ \hookrightarrow^{\J} \circ \hookrightarrow^{\sigma} {\tt S}$ (see Figure \ref{fig:JPi+2Normalization}).
\end{proof}

We now want to show permutation of the application of multiple $\pi^+$-rules after multiple modal rewriting rules. To show permutability of $\pi^+$-rules following a $\J$-rule, we need to reorganize duplications applied at positions in increasing depth in the tree. Furthermore, we require flexibility to choose which branch to duplicate first.

For readability, we write ${\tt T} \hookrightarrow^{\pi^+(\mathbf{k},i)} {\tt S}$ to denote that ${\tt S}$ is obtained by applying the $\pi^+$-rule to duplicate the $i$th child at position $\mathbf{k} \in {\sf Pos}({\tt T})$. Similarly, ${\tt T} \hookrightarrow^{\sigma(\mathbf{k},i,j)} {\tt S}$ denotes the application of the $\sigma$-rule to permute the $i$th and $j$th children at position $\mathbf{k} \in {\sf Pos}({\tt T})$. Given this notation, the following result aids in reorganizing duplications at positions in increasing depth.

\begin{lemma}[Depth-level permutability of $\pi^+$]
\label{PermutDepth}
\leavevmode\vspace{0.01em}
\begin{enumerate}
    \item If ${\tt T} \hookrightarrow^{\pi^+(l\mathbf{k},i)} \circ \hookrightarrow^{\pi^+(\epsilon,j)} {\tt S}$ for $j \neq l$, then ${\tt T} \hookrightarrow^{\pi^+(\epsilon,j)} \circ \hookrightarrow^{\pi^+((l+1)\mathbf{k},i)} {\tt S}$.
    \item If ${\tt T} \hookrightarrow^{\pi^+(j\mathbf{k},i)} \circ \hookrightarrow^{\pi^+(\epsilon,j)} {\tt S}$, then ${\tt T} \hookrightarrow^{\pi^+(\epsilon,j)} \circ \hookrightarrow^{\pi^+((j+1)\mathbf{k},i)} \circ \hookrightarrow^{\pi^+(1\mathbf{k},i)} {\tt S}$.
    \item If ${\tt T} \hookrightarrow^{\pi^+(l\mathbf{k}_l,i)} \circ \hookrightarrow^{\pi^+ (n\mathbf{k}_n,j)} {\tt S}$ for $|n\mathbf{k}_n| < |l\mathbf{k}_l|$, then ${\tt T} \hookrightarrow^{\pi^+(n\mathbf{k}_n,j)} \circ \hookrightarrow^{\pi^+ (l\mathbf{k}_l,i)} {\tt S}$.
\end{enumerate}
\end{lemma}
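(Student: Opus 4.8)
The plan is to prove each of the three statements by the same two-layer structure used throughout this section: an outer induction on the length of the position $\mathbf{k}$ (or $\mathbf{k}_l$) appearing in the \emph{deeper} of the two $\pi^+$-applications, and an inner case split on whether the two applications act on overlapping portions of the tree. Since both rewriting rules only modify a subtree at a prefix of their displayed positions, the key observation is that when the shallower application is performed at the root ($\epsilon$) and the deeper one at $l\mathbf{k}$, the deeper application takes place entirely within one of the children of the root, and the root-level duplication $\pi^+(\epsilon,j)$ either leaves that child untouched (if $j\neq l$) or creates a second copy of it (if $j=l$). I would first dispatch statement~3, the "disjoint depths" case, as it is the cleanest: since $|n\mathbf{k}_n| < |l\mathbf{k}_l|$, the shallower duplication $\pi^+(n\mathbf{k}_n,j)$ acts at a strictly higher node, and by Lemma~\ref{LemmaSubtreeReplacement} (especially parts 4 and 5, transitivity of replacement and the nested-replacement identity) the two replacements commute, possibly after re-indexing the deeper position if the shallower duplication shifts the child list; an induction on $|n\mathbf{k}_n|$ handles the bookkeeping.

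For statement~1, I would argue that after performing $\pi^+(\epsilon,j)$ first, the root's child list grows by inserting a copy of its $j$th child at the front; since $j\neq l$, the subtree formerly reached by the $l$th child is now reached by the $(l{+}1)$th child, so the deeper duplication must be re-addressed as $\pi^+((l{+}1)\mathbf{k},i)$, and the two operations target independent parts of the final tree. The formal justification is again a computation with \textsf{Pos} and the replacement lemma, unwinding both sides of the claimed equality; an induction on $|\mathbf{k}|$ reduces it to the base case where $\mathbf{k}=\epsilon$, i.e.\ both duplications act at children of the root but in different slots.

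Statement~2 is the genuinely interesting one and I expect it to be the main obstacle: here the shallower duplication $\pi^+(\epsilon,j)$ copies exactly the child ($j$th) inside which the deeper duplication $\pi^+(j\mathbf{k},i)$ was performed. Doing $\pi^+(j\mathbf{k},i)$ first modifies that child once, then $\pi^+(\epsilon,j)$ duplicates the \emph{already-modified} child, so the front copy and the copy now in slot $j{+}1$ both carry the extra $i$th grandchild. To reproduce this by duplicating first, one does $\pi^+(\epsilon,j)$, obtaining two identical copies of the original $j$th child (now in slots $1$ and $j{+}1$), and then one must repair \emph{both} copies — hence the two trailing duplications $\pi^+((j{+}1)\mathbf{k},i) \circ \pi^+(1\mathbf{k},i)$ in the statement. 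The care needed is in verifying that the positions $1\mathbf{k}$ and $(j{+}1)\mathbf{k}$ are indeed valid in the intermediate trees and that applying the grandchild-duplication at each of them yields precisely the tree obtained the other way; this is where I would be most careful to track how $\pi^+(\epsilon,j)$ renumbers the child slots and to invoke Lemma~\ref{LemmaSubtreeReplacement}(4)--(5) to collapse the nested replacements. Once the base case $\mathbf{k}=\epsilon$ is settled, the inductive step on $|\mathbf{k}|$ pushes the argument one level deeper into the relevant child and closes the proof.
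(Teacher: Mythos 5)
Your plan is correct and matches the paper's (very terse) proof in substance: the paper dispatches statements~1 and~2 as immediate from the definition of $\pi^+$ --- exactly the renumbering and ``repair both copies'' computations you describe --- and proves statement~3 by induction on the tree structure of ${\tt T}$, which is equivalent to your induction on the length of the shallower position. The only caveat is your aside about ``re-indexing the deeper position'' in statement~3: the stated conclusion keeps the position $l\mathbf{k}_l$ unchanged, so your argument there must confine itself to the case where no re-indexing is needed (the overlapping cases being exactly what statements~1 and~2 handle).
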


\begin{proof}
    The first two statements are trivially satisfied by definition. The third follows by induction on the tree structure of ${\tt T}$.
\end{proof}

\begin{figure}[t]
\centering
\begin{subfigure}{1\textwidth}
\centering
\begin{subfigure}{.2\textwidth}
    \centering
    \begin{tikzpicture}  
        \node[state] (b) at (0,0) {};
        \node[state] (c) [below left =of b] {};
        \node[state][label=right: \hspace{0.3cm}$\hookrightarrow^{\4}$] (d) [below right =of b] {};
        \node[state] (f) [below =of c] {};
        \draw[-latex] (b) -- (c) node[fill=white,inner sep=2pt,midway] {{\small $\alpha$}};
        \draw[-latex] (c) -- (f) node[fill=white,inner sep=2pt,midway] {{\small $\alpha$}};
        \draw[-latex] (b) -- (d) node[fill=white,inner sep=2pt,midway] {{\small $\beta$}};
    \end{tikzpicture}
\end{subfigure}
\begin{subfigure}{.2\textwidth}
    \centering
    \begin{tikzpicture}  
        \node[state] (b) at (0,0) {};
        \node[state][label=right: \hspace{0.3cm}$\hookrightarrow^{\J}$] (d) [below right =of b] {};
        \node[state] (f) [below left =of b] {};
        \node[emptystate] (h) [below left=of f] {};
        \draw[-latex] (b) -- (f) node[fill=white,inner sep=2pt,midway] {{\small $\alpha$}};
        \draw[-latex] (b) -- (d) node[fill=white,inner sep=2pt,midway] {{\small $\beta$}}; 
    \end{tikzpicture} 
\end{subfigure}
\hspace{0.6cm}
\begin{subfigure}{.22\textwidth}
    \centering
    \begin{tikzpicture}   
     \node[state] (b) at (0,0) {};
     \node[state] (d) [below =of b] {};
     \node[state] (f) [below =of d] {};  
     \draw[-latex] (b) -- (d) node[fill=white,inner sep=2pt,midway] {$\alpha$}; 
     \draw[-latex] (d) -- (f) node[fill=white,inner sep=2pt,midway] {$\beta$};
    \end{tikzpicture} 
\end{subfigure}
\caption{${\tt T} \hookrightarrow^{\4} \circ \hookrightarrow^{\J} {\tt S}$.}
\end{subfigure}
\vspace{0.5cm}
\begin{subfigure}{1\textwidth}
\centering
\begin{subfigure}{.2\textwidth}
    \centering
    \begin{tikzpicture}  
        \node[state] (b) at (0,0) {};
        \node[state] (c) [below left =of b] {};
        \node[state][label=right: \hspace{0.2cm}$\hookrightarrow^{\J}$] (d) [below right =of b] {};
        \node[state] (f) [below =of c] {};
        \draw[-latex] (b) -- (c) node[fill=white,inner sep=2pt,midway] {{\small $\alpha$}};
        \draw[-latex] (c) -- (f) node[fill=white,inner sep=2pt,midway] {{\small $\alpha$}};
        \draw[-latex] (b) -- (d) node[fill=white,inner sep=2pt,midway] {{\small $\beta$}};
    \end{tikzpicture} 
\end{subfigure}
\hspace{0.1cm}
\begin{subfigure}{.2\textwidth}
    \centering
    \begin{tikzpicture}   
        \node[state] (b) at (0,0) {};
        \node[state][label=right: \hspace{0.5cm}$\hookrightarrow^{\J}$] (c) [below =of b] {};
        \node[state] (d) [below left =of c] {};
        \node[state] (f) [below right =of c] {};
        \draw[-latex] (b) -- (c) node[fill=white,inner sep=2pt,midway] {{\small $\alpha$}};
        \draw[-latex] (c) -- (f) node[fill=white,inner sep=2pt,midway] {{\small $\beta$}};
        \draw[-latex] (c) -- (d) node[fill=white,inner sep=2pt,midway] {{\small $\alpha$}};
    \end{tikzpicture} 
\end{subfigure}
\hspace{0.2cm}
\begin{subfigure}{.1\textwidth}
    \centering
    \begin{tikzpicture}   
         \node[state] (b) at (0,0) {};
        \node[state]  (c) [below =of b] {};
        \node[state][label=right: \hspace{0.3cm}$\hookrightarrow^{\4}$] (f) [below =of c] {};
        \node[state] (d) [below =of f] {};
        \draw[-latex] (b) -- (c) node[fill=white,inner sep=2pt,midway] {{\small $\alpha$}};
        \draw[-latex] (c) -- (f) node[fill=white,inner sep=2pt,midway] {{\small $\alpha$}};
        \draw[-latex] (f) -- (d) node[fill=white,inner sep=2pt,midway] {{\small $\beta$}};
    \end{tikzpicture} 
\end{subfigure}
\begin{subfigure}{.07\textwidth}
    \centering
    \begin{tikzpicture}   
    \node[state] (b) at (0,0) {};
     \node[state] (d) [below =of b] {};
     \node[state] (f) [below =of d] {};  
     \draw[-latex] (b) -- (d) node[fill=white,inner sep=2pt,midway] {$\alpha$}; 
     \draw[-latex] (d) -- (f) node[fill=white,inner sep=2pt,midway] {$\beta$};
    \end{tikzpicture} 
\end{subfigure}
\caption{${\tt T} \hookrightarrow^{\J} \circ \hookrightarrow^{\J} \circ \hookrightarrow^{\4} {\tt S}$.}
\end{subfigure}
\caption{${\tt T} \hookrightarrow^{\4} \circ \hookrightarrow^{\J} {\tt S} \Longrightarrow {\tt T} \hookrightarrow^{\J} \circ \hookrightarrow^{\J} \circ \hookrightarrow^{\4} {\tt S}$.}
\label{fig:4JNormalization}
\end{figure}

Likewise, the lemma below permits to choose which branch to duplicate first. 

\begin{remark}[Width-level permutability of $\pi^+$]
\label{PermutFixedDepth}
\leavevmode\vspace{0.01em}
\begin{enumerate}
    \item If ${\tt T} \hookrightarrow^{\pi^+(\epsilon,i)} \circ \hookrightarrow^{\pi^+(\epsilon,j)} {\tt S}$ for $1 < j$ and $j \neq i+1$, then ${\tt T} \hookrightarrow^{\pi^+ (\epsilon,j-1)} \circ \hookrightarrow^{\pi^+ (\epsilon,i+1)} \hookrightarrow^{\sigma (\epsilon,1,2)} {\tt S}$.
    \item If ${\tt T} \hookrightarrow^{\pi^+(n\mathbf{k}_n,i)} \circ \hookrightarrow^{\pi^+(l\mathbf{k}_l,j)} {\tt S}$ for $n \neq l$, then ${\tt T} \hookrightarrow^{\pi^+(l\mathbf{k}_l,j)} \circ \hookrightarrow^{\pi^+(n\mathbf{k}_n,i)} {\tt S}$.
\end{enumerate}
\end{remark}

Therefore, we can show permutation of the application of arbitrary $\pi^+$-rules after a $\J$-rule.

\begin{figure}[t]
    \centering
\begin{subfigure}{1\textwidth}
\centering
\begin{subfigure}{.3\textwidth}
    \centering
    \begin{tikzpicture}        
        \node[state][label=right: \hspace{0.8cm}$\hookrightarrow^{\J}$]  (b) at (0,0) {};
        \node[state] (c) [below left =of b] {};
        \node[state](d) [below right =of b] {};
        \draw[-latex] (b) -- (c) node[fill=white,inner sep=2pt,midway] {{\small $\alpha$}};
        \draw[-latex] (b) -- (d) node[fill=white,inner sep=2pt,midway] {{\small $\beta$}}; 
    \end{tikzpicture} 
\end{subfigure}
\begin{subfigure}{.1\textwidth}
    \centering
    \begin{tikzpicture}
    \node[state] (b) at (0,0) {};
     \node[state][label=right: \hspace{0.3cm}$\hookrightarrow^{\pi^+}$] (d) [below =of b] {};
     \node[state] (f) [below =of d] {}; 
     \draw[-latex] (b) -- (d) node[fill=white,inner sep=2pt,midway] {$\alpha$}; 
     \draw[-latex] (d) -- (f) node[fill=white,inner sep=2pt,midway] {$\beta$};
    \end{tikzpicture} 
\end{subfigure}
\hspace{0.2cm}
\begin{subfigure}{.2\textwidth}
    \centering
    \begin{tikzpicture}        
    \node[state] (b) at (0,0) {};
     \node[state] (d) [below left =of b] {};
     \node[state] (x) [below right =of b] {};
     \node[state] (f) [below =of d] {}; 
     \node[state] (z) [below =of x] {};
     \draw[-latex] (b) -- (d) node[fill=white,inner sep=2pt,midway] {$\alpha$}; 
     \draw[-latex] (d) -- (f) node[fill=white,inner sep=2pt,midway] {$\beta$};
     \draw[-latex] (b) -- (x) node[fill=white,inner sep=2pt,midway] {$\alpha$}; 
     \draw[-latex] (x) -- (z) node[fill=white,inner sep=2pt,midway] {$\beta$};
    \end{tikzpicture} 
\end{subfigure}
\caption{${\tt T} \hookrightarrow^{\J} \circ \hookrightarrow^{\pi^+} {\tt S}$.}
\end{subfigure}
\begin{subfigure}{1\textwidth}
\centering
\begin{subfigure}{.2\textwidth}
    \centering
    \begin{tikzpicture}        
        \node[state][label=right: \hspace{0.8cm}$\hookrightarrow^{\pi^+} \circ \hookrightarrow^{\pi^+}$] (b) at (0,0) {};
        \node[state] (c) [below left =of b] {};
        \node[state] (d) [below right =of b] {};
        \draw[-latex] (b) -- (c) node[fill=white,inner sep=2pt,midway] {{\small $\alpha$}};
        \draw[-latex] (b) -- (d) node[fill=white,inner sep=2pt,midway] {{\small $\beta$}}; 
    \end{tikzpicture} 
\end{subfigure}
\hspace{0.2cm}
\begin{subfigure}{.2\textwidth}
    \centering
    \begin{tikzpicture}        
        \node[state][label=right: \hspace{0.3cm}$\hookrightarrow^{\J}$] (b) at (0,0) {};
        \node[state] (c) [below =of b] {};
        \node[state] (d) [below right =of b] {};
        \node[state] (x) [below left =of b] {};
        \node[state] (y) [left =of x] {};
        \draw[-latex] (b) -- (c) node[fill=white,inner sep=2pt,midway] {{\small $\alpha$}};
        \draw[-latex] (b) -- (y) node[fill=white,inner sep=2pt,midway] {{\small $\alpha$}};
        \draw[-latex] (b) -- (d) node[fill=white,inner sep=2pt,midway] {{\small $\beta$}}; 
        \draw[-latex] (b) -- (x) node[fill=white,inner sep=2pt,midway] {{\small $\beta$}};
    \end{tikzpicture} 
\end{subfigure}
\hspace{1cm}
\begin{subfigure}{.2\textwidth}
    \centering
    \begin{tikzpicture}        
    \node[state] (b) at (0,0) {};
     \node[state] (d) [below left =of b] {};
     \node[state] (x) [below =of b] {};
     \node[state] (f) [below =of d] {}; 
     \node[state][label=right: \hspace{0.15cm}$\hookrightarrow^{\J}$] (z) [below right =of b] {}; 
     \draw[-latex] (b) -- (d) node[fill=white,inner sep=2pt,midway] {$\alpha$}; 
     \draw[-latex] (d) -- (f) node[fill=white,inner sep=2pt,midway] {$\beta$};
     \draw[-latex] (b) -- (x) node[fill=white,inner sep=2pt,midway] {$\alpha$}; 
     \draw[-latex] (b) -- (z) node[fill=white,inner sep=2pt,midway] {$\beta$};
    \end{tikzpicture} 
\end{subfigure}
\hspace{0.5cm}
\begin{subfigure}{.2\textwidth}
    \centering
    \begin{tikzpicture}        
    \node[state] (b) at (0,0) {};
     \node[state] (d) [below left =of b] {};
     \node[state] (x) [below right =of b] {};
     \node[state] (f) [below =of d] {}; 
     \node[state] (z) [below =of x] {}; 
     \draw[-latex] (b) -- (d) node[fill=white,inner sep=2pt,midway] {$\alpha$}; 
     \draw[-latex] (d) -- (f) node[fill=white,inner sep=2pt,midway] {$\beta$};
     \draw[-latex] (b) -- (x) node[fill=white,inner sep=2pt,midway] {$\alpha$}; 
     \draw[-latex] (x) -- (z) node[fill=white,inner sep=2pt,midway] {$\beta$};
    \end{tikzpicture} 
\end{subfigure}
\caption{${\tt T} \hookrightarrow^{\pi^+} \circ \hookrightarrow^{\pi^+} \circ \hookrightarrow^{\J} \circ \hookrightarrow^{\J} {\tt S}$.}
\end{subfigure}
\caption{${\tt T} \hookrightarrow^{\J} \circ \hookrightarrow^{\pi^+} {\tt S} \Longrightarrow {\tt T} \hookrightarrow^{\pi^+} \circ \hookrightarrow^{\pi^+} \circ\hookrightarrow^{\J} \circ \hookrightarrow^{\J} {\tt S}$.}
\label{fig:JPi+1Normalization}
\end{figure}

\begin{proposition}
\label{JPi+}
If ${\tt T} \hookrightarrow^{\J} \circ \hookrightarrow^{\pi^+ *} {\tt S}$, then ${\tt T} \hookrightarrow^{\pi^+ *} \circ \hookrightarrow^{\J *} \circ \hookrightarrow^{\sigma *} {\tt S}$.
\end{proposition}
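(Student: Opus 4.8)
The plan is to reduce everything to the single-step interaction already established in Proposition~\ref{ChildDupModal} (used with $\theta=\J$), doing the bookkeeping with Corollary~\ref{CorollarySigma} (which sends $\sigma$-steps to the tail past $\pi^+$- and $\J$-steps), with Lemma~\ref{PermutDepth} and Remark~\ref{PermutFixedDepth} (which reorder $\pi^+$-steps among themselves, by depth and by width), and with the position calculus of Lemma~\ref{LemmaSubtreeReplacement} for the inevitable index renamings. I would work throughout with the list of rules realising the derivation from ${\tt T}$ to ${\tt S}$, maintaining two invariants that hold at the outset and are restored after each transformation below: the list mentions only the letters $\pi^+$, $\J$ and $\sigma$ (Proposition~\ref{ChildDupModal} with $\theta=\J$ produces exactly these), and all of its $\sigma$-entries are already collected at the very end.

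The engine of the proof is one macro-move that strictly decreases the quantity $N$, the number of $\pi^+$-steps of the current list lying to the right of some $\J$-step. If $N=0$ the list already has the shape $\hookrightarrow^{\pi^+*}\circ\hookrightarrow^{\J*}\circ\hookrightarrow^{\sigma*}$ and we are done. If $N>0$, let $P$ be the leftmost $\pi^+$-step lying to the right of a $\J$-step; since the $\sigma$-steps are at the tail, everything strictly before $P$ is a block of $\pi^+$-steps followed by a block of $\J$-steps. I would then ``bubble'' $P$ to the left of that $\J$-block, one $\J$-step at a time: a single adjacent pattern $\hookrightarrow^{\J}\circ\hookrightarrow^{\pi^+}$ is rewritten, by Proposition~\ref{ChildDupModal} with $\theta=\J$, as $\hookrightarrow^{\pi^+*}\circ\hookrightarrow^{\J*}\circ\hookrightarrow^{\sigma*}$ with at most two new $\pi^+$-steps and at most two new $\J$-steps; I then push the new $\sigma$-steps to the tail with Corollary~\ref{CorollarySigma}, use Lemma~\ref{PermutDepth} and Remark~\ref{PermutFixedDepth} to re-sort the (possibly enlarged) family of $\pi^+$-steps descended from $P$ into the depth-increasing, width-respecting order required for them to be swapped, in turn, past the $\J$-steps still lying ahead of them, and continue. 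When this whole family has finally moved to the left of every $\J$-step, all its members have left the count $N$; meanwhile the $\pi^+$-steps that were already before all $\J$-steps stay there (the freshly created $\J$-steps all land to the right of the family), and the $\pi^+$-steps to the right of $P$ remain to the right of $\J$-steps — so $N$ has dropped by exactly one. Since $N$ is a non-negative integer with initial value the length of the $\pi^+$-block in the hypothesis, iterating the macro-move finitely many times delivers the normal form, after a last call to Corollary~\ref{CorollarySigma} to merge the blocks.

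The main obstacle, and the reason a naive induction ``on the number of $\pi^+$-steps'' does not close on its own, is that Proposition~\ref{ChildDupModal} is expansive: the cases $\hookrightarrow^{\pi^+}\circ\hookrightarrow^{\pi^+}\circ\hookrightarrow^{\J}\circ\hookrightarrow^{\J}$ and $\hookrightarrow^{\pi^+}\circ\hookrightarrow^{\J}\circ\hookrightarrow^{\J}\circ\hookrightarrow^{\sigma}$ multiply both kinds of step, so inside a single macro-move the family descended from $P$ can itself grow — each $\J$-step it passes multiplies its size by a bounded factor. Two things then need care: (i) that this inner process still terminates, which it does because the number of $\J$-steps the family must yet pass strictly decreases at every inner swap while the family stays bounded by a function of that number; and (ii) that every $\J$-step created by an inner swap lands to the \emph{right} of the whole family, so that no $\pi^+$-step that was already clean can be dirtied — this is exactly where one must track, using Lemma~\ref{LemmaSubtreeReplacement}, that the fresh $\pi^+$-steps act at the depth of the $\J$-step being passed (hence at no greater depth than $P$ itself) and can be slotted, via Lemma~\ref{PermutDepth} and Remark~\ref{PermutFixedDepth}, into the family in canonical order. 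The base case is Proposition~\ref{ChildDupModal} itself, and the concluding rearrangement into $\hookrightarrow^{\pi^+*}\circ\hookrightarrow^{\J*}\circ\hookrightarrow^{\sigma*}$ is routine.
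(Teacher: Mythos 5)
Your overall architecture is the same as the paper's: reduce to the single-step interaction of Proposition~\ref{ChildDupModal}, flush $\sigma$-steps to the tail with Corollary~\ref{CorollarySigma}, and reorder $\pi^+$-steps with Lemma~\ref{PermutDepth} and Remark~\ref{PermutFixedDepth}. The gap is in the termination of your inner ``bubbling'' process, and it is not a technicality. Abstract the situation as a word over $\{\J,\pi^+\}$ with the worst-case local rewrite $\J\,\pi^+ \leadsto \pi^+\pi^+\J\J$ (the expansive case of Proposition~\ref{ChildDupModal} that you correctly identify). Your claim that ``the number of $\J$-steps the family must yet pass strictly decreases at every inner swap'' is false: when a family member passes a $\J$-step, that $\J$-step may split into two, and every family member still to its right now has \emph{more} $\J$-steps to pass than before. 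Concretely, starting from $\J\J\pi^+$ (which arises already in your second macro-move, since the first one can leave a $\J$-block of size two) and always contracting the leftmost dirty $\pi^+$ --- which is exactly your prescribed strategy --- one gets $\J\J\pi^+ \leadsto \J\pi^+\pi^+\J\J \leadsto \pi^+\pi^+\J\J\pi^+\J\J \leadsto \cdots \leadsto (\pi^+)^{2i}\,\J\J\,\pi^+\,\J^{2i+2} \leadsto \cdots$, in which the configuration $\J\J\pi^+$ reproduces itself forever. So at the purely syntactic level at which your measure is stated, the inner process need not terminate, and ``the family stays bounded by a function of that number'' is likewise unjustified.

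What actually makes the process terminate is semantic: the expansive case can only fire when a $\pi^+$ duplicates the very child affected by the $\J$-step, and each such firing creates a new copy of that subtree in the fixed target tree ${\tt S}$, so the total number of firings is bounded by data read off from ${\tt S}$. This is precisely the device the paper introduces and that your proof lacks: nodes are labelled by their positions, and the induction is on the number of occurrences in ${\tt S}$ of the \emph{upper} and \emph{lower} $\J$-labels (nested with an outer induction on the length of the position at which $\J$ is applied). That measure is fixed in advance by ${\tt S}$ and genuinely decreases, whereas any measure computed from the current rewrite list, like your $N$ together with the inner count, does not. To repair your argument you would need to replace the inner termination claim by such a bound tied to ${\tt S}$ (or an equivalent multiset/ordinal measure on the anchored positions); as written, the induction is not well-founded.
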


\begin{proof}
If the $\J$-rule and the $\pi^+$-rules are applied in different positions of the tree such that the performed transformations are disjoint, their permutation is proven straightforward as shown in Proposition \ref{ChildDupModal}. Therefore, we will use Lemma \ref{PermutDepth} and Remark \ref{PermutFixedDepth} to reorganise the application of the $\pi^+$-rules to easily permute the disjoint cases. Otherwise, in cases where the subtree \textit{affected} by the $\J$-rule is duplicated, we use a unique labeling of the nodes to track the number of times those subtrees are duplicated. Specifically, we label the nodes with their positions. For ${\tt T} \hookrightarrow^{\J} \hookrightarrow^{*} {\tt S}$ such that $\J$ has been performed at position $n\mathbf{k} \in {\sf Pos}({\tt T})$, we call the \textit{$\J$-label} to the label $n$. Moreover, if $\J$ has been performed at the empty position relating the $i$th and the $j$th children according to the notation of its definition, we call the \textit{upper $\J$-label} to the label $i$ and the \textit{lower $\J$-label} to the label $j$.

Therefore, the proof proceeds by induction on the length of the position in which the $\J$-rule has been applied.
For the $\J$-rule applied at the empty position, we proceed by induction on the occurrences of the upper $\J$-label. The inductive step is shown by Proposition \ref{ChildDupModal} and Corollary \ref{CorollarySigma}, using Lemma \ref{PermutDepth} and Remark \ref{PermutFixedDepth} to get a suitable reorganisation of the applications of the $\pi^+$-rules. The base case in which the upper $\J$-label occurs once in ${\tt S}$ is shown by induction on the occurrences of the lower $\J$-label using the same strategy.

Lastly, if the $\J$-rule is applied at a non-empty position, we proceed by induction on the occurrences of the $\J$-label. The base case follows by Proposition \ref{ChildDupModal} using Lemma \ref{PermutDepth} and Remark \ref{PermutFixedDepth} together with the inductive hypothesis on the position in which $\J$ has been performed. The inductive step is similarly proven by Proposition \ref{ChildDupModal} and Corollary \ref{CorollarySigma} using Lemma \ref{PermutDepth} and Remark \ref{PermutFixedDepth}.
\end{proof}

\begin{corollary}
\label{CorollaryExtensional}
Let $\Omega_\diamond$ be a list of modal rewriting rules.
\begin{enumerate}
    \item If ${\tt T} \hookrightarrow^{\lambda} \circ \hookrightarrow^{\pi^+ *} {\tt S}$, then ${\tt T} \hookrightarrow^{\pi^+ *} \circ \hookrightarrow^{\lambda *} {\tt S}$ by applying the same number of $\pi^+$-rules.
    \item If ${\tt T} \hookrightarrow^{\Omega_\diamond} \circ \hookrightarrow^{\pi^+ *} {\tt S}$, then ${\tt T} \hookrightarrow^{\pi^+ *} \circ \hookrightarrow^{\Omega'_\diamond} \circ \hookrightarrow^{\sigma *} {\tt S}$ for $\Omega'_\diamond$ a list of modal rewriting rules.
\end{enumerate}
\end{corollary}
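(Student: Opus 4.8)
The plan is to obtain both statements by peeling one modal rule at a time off the front of the rewriting and invoking the heavy-duty results already proved, namely Propositions \ref{ChildDupModal} and \ref{JPi+}, together with Corollary \ref{CorollarySigma}.

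\emph{Statement 1.} First I would extract from the proof of Proposition \ref{ChildDupModal} the sharper fact that in the case $\theta = \lambda$ no $\sigma$-step is produced and only a single $\pi^+$-step is used: if ${\tt T} \hookrightarrow^{\lambda} \circ \hookrightarrow^{\pi^+} {\tt S}$ then ${\tt T} \hookrightarrow^{\pi^+} \circ \hookrightarrow^{\lambda *} {\tt S}$, the two possibilities being ${\tt T} \hookrightarrow^{\pi^+} \circ \hookrightarrow^{\lambda} {\tt S}$ and ${\tt T} \hookrightarrow^{\pi^+} \circ \hookrightarrow^{\lambda} \circ \hookrightarrow^{\lambda} {\tt S}$ (Figure \ref{fig:Pi+Normalization}). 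A routine induction on the length of a block of $\lambda$-steps upgrades this to: ${\tt T} \hookrightarrow^{\lambda *} \circ \hookrightarrow^{\pi^+} {\tt S}$ implies ${\tt T} \hookrightarrow^{\pi^+} \circ \hookrightarrow^{\lambda *} {\tt S}$, still with exactly one $\pi^+$-step. Then an induction on the number of $\pi^+$-steps, each time moving the leftmost $\pi^+$ to the front via the previous fact and applying the inductive hypothesis to what remains, yields: ${\tt T} \hookrightarrow^{\lambda *} \circ \hookrightarrow^{\pi^+ *} {\tt S}$ implies ${\tt T} \hookrightarrow^{\pi^+ *} \circ \hookrightarrow^{\lambda *} {\tt S}$ with the same number of $\pi^+$-steps. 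Statement 1 is the instance in which the left-hand $\lambda$-block has length one.

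\emph{Statement 2.} Here I would argue by induction on the length of $\Omega_\diamond$. The base case $\Omega_\diamond = \varnothing$ is immediate. For the inductive step write $\Omega_\diamond = \theta \smallfrown \hat\Omega_\diamond$ with $\theta$ a modal rule, so ${\tt T} \hookrightarrow^{\theta} {\tt T}_1 \hookrightarrow^{\hat\Omega_\diamond} \circ \hookrightarrow^{\pi^+ *} {\tt S}$; by the inductive hypothesis ${\tt T}_1 \hookrightarrow^{\pi^+ *} \circ \hookrightarrow^{\Psi} \circ \hookrightarrow^{\sigma *} {\tt S}$ for some list $\Psi$ of modal rules. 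Let ${\tt T}_2$ be the tree reached from ${\tt T}_1$ after the $\pi^+ *$-block, so that ${\tt T} \hookrightarrow^{\theta} \circ \hookrightarrow^{\pi^+ *} {\tt T}_2$. This last segment can be reordered into ${\tt T} \hookrightarrow^{\pi^+ *} \circ \hookrightarrow^{\Theta} \circ \hookrightarrow^{\sigma *} {\tt T}_2$ for a list $\Theta$ of modal rules: if $\theta = \J$ this is exactly Proposition \ref{JPi+}, and if $\theta = \lambda$ it is Statement 1 (with an empty $\sigma$-block). Splicing these and using that $\Psi$ is a list of modal rules, Corollary \ref{CorollarySigma} lets us push the intermediate $\sigma *$-block past $\Psi$, so ${\tt T} \hookrightarrow^{\pi^+ *} \circ \hookrightarrow^{\Theta} \circ \hookrightarrow^{\Psi} \circ \hookrightarrow^{\sigma *} {\tt S}$; taking $\Omega'_\diamond := \Theta \smallfrown \Psi$ finishes the induction.

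The work here is bookkeeping rather than conceptual. The only point requiring care is that the $\lambda$-handling must introduce no $\sigma$-steps and must not increase the $\pi^+$-count — this is why Statement 1 genuinely claims ``the same number of $\pi^+$-rules'' and why one reads the refined $\lambda$-case off the \emph{proof} of Proposition \ref{ChildDupModal} rather than quoting its statement verbatim — and that Corollary \ref{CorollarySigma} is invoked only to commute $\sigma$-blocks past blocks of modal rules, which is precisely the situation it covers.
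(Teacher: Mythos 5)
Your proposal is correct and follows essentially the same route as the paper: statement 1 by induction on the number of $\pi^+$-steps using Proposition \ref{ChildDupModal}, and statement 2 by induction on the length of $\Omega_\diamond$ with a case split into $\lambda$ (handled via statement 1) and $\J$ (handled via Proposition \ref{JPi+}), using Corollary \ref{CorollarySigma} to commute the intermediate $\sigma$-blocks to the end. Your explicit remark that the $\lambda$-case must be read off the \emph{proof} of Proposition \ref{ChildDupModal} (exactly one $\pi^+$, no $\sigma$) in order to justify ``the same number of $\pi^+$-rules'' makes precise a detail the paper's two-line proof leaves implicit, but it is the intended argument.
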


\begin{proof}
The first statement follows by induction on the number of $\pi^+$-rules applied and Proposition \ref{ChildDupModal}. The second statement follows by induction on the length of $\Omega_\diamond$. The inductive step is shown by cases on the considered modal rewriting rule: the $\lambda$-rule case follows by the first statement and the $\J$-rule case is shown by Proposition \ref{JPi+} and Corollary \ref{PermutationWithRest}.
\end{proof}

\begin{figure}
    \centering
\begin{subfigure}{1\textwidth}
\centering
\begin{subfigure}{.2\textwidth}
    \centering
    \begin{tikzpicture}  
        \node[state] (b) at (0,0) {};
        \node[state] (c) [below left =of b] {};
        \node[state][label=right: \hspace{0.3cm}$\hookrightarrow^{\J}$] (d) [below right =of b] {};
        \node[state] (e) [below =of c] {};
        \draw[-latex] (b) -- (c) node[fill=white,inner sep=2pt,midway] {{\small $\alpha$}};
        \draw[-latex] (b) -- (d) node[fill=white,inner sep=2pt,midway] {{\small $\beta$}};
        \draw[-latex] (c) -- (e) node[fill=white,inner sep=2pt,midway] {{\small $\gamma$}}; 
    \end{tikzpicture}
\end{subfigure}
\begin{subfigure}{.2\textwidth}
    \centering
    \begin{tikzpicture}   
     \node[state] (b) at (0,0) {};
     \node[state][label=right: \hspace{0.8cm}$\hookrightarrow^{\pi^+}$] (d) [below =of b] {};
     \node[state] (f) [below left =of d] {}; 
     \node[state] (g) [below right =of d] {};
     \draw[-latex] (b) -- (d) node[fill=white,inner sep=2pt,midway] {$\alpha$}; 
     \draw[-latex] (d) -- (f) node[fill=white,inner sep=2pt,midway] {$\gamma$}; 
     \draw[-latex] (d) -- (g) node[fill=white,inner sep=2pt,midway] {$\beta$};
    \end{tikzpicture} 
\end{subfigure}
\begin{subfigure}{.2\textwidth}
    \centering
    \begin{tikzpicture}   
     \node[state] (b) at (0,0) {};
     \node[state] (d) [below =of b] {};
     \node[state] (f) [below =of d] {}; 
     \node[state] (g) [below right =of d] {};
     \node[state] (h) [below left =of d] {};
     \draw[-latex] (b) -- (d) node[fill=white,inner sep=2pt,midway] {$\alpha$}; 
     \draw[-latex] (d) -- (f) node[fill=white,inner sep=2pt,midway] {$\gamma$}; 
     \draw[-latex] (d) -- (g) node[fill=white,inner sep=2pt,midway] {$\beta$};
     \draw[-latex] (d) -- (h) node[fill=white,inner sep=2pt,midway] {$\beta$};
    \end{tikzpicture} 
\end{subfigure}
\caption{${\tt T} \hookrightarrow^{\J} \circ \hookrightarrow^{\pi^+} {\tt S}$.}
\end{subfigure}
\vspace{0.5cm}
\begin{subfigure}{1.\textwidth}
\centering
\begin{subfigure}{.2\textwidth}
    \begin{tikzpicture}  
        \node[state] (b) at (0,0) {};
        \node[state] (c) [below left =of b] {};
        \node[state][label=right: \hspace{0.2cm}$\hookrightarrow^{\pi^+}$] (d) [below right =of b] {};
        \node[state] (e) [below =of c] {};
        \draw[-latex] (b) -- (c) node[fill=white,inner sep=2pt,midway] {{\small $\alpha$}};
        \draw[-latex] (b) -- (d) node[fill=white,inner sep=2pt,midway] {{\small $\beta$}};
        \draw[-latex] (c) -- (e) node[fill=white,inner sep=2pt,midway] {{\small $\gamma$}};
    \end{tikzpicture} 
\end{subfigure}
\hspace{0.8cm}
\begin{subfigure}{.2\textwidth}
    \centering
    \begin{tikzpicture}   
        \node[state] (b) at (0,0) {};
        \node[state] (c) [below =of b] {};
        \node[state][label=right: \hspace{0.2cm}$\hookrightarrow^{\J} \circ \hookrightarrow^{\J} $] (d) [below right =of b] {};
        \node[state] (e) [below left =of b] {};
        \node[state] (f) [below =of c] {};
        \draw[-latex] (b) -- (c) node[fill=white,inner sep=2pt,midway] {{\small $\alpha$}};
        \draw[-latex] (b) -- (d) node[fill=white,inner sep=2pt,midway] {{\small $\beta$}};
        \draw[-latex] (b) -- (e) node[fill=white,inner sep=2pt,midway] {{\small $\beta$}};
        \draw[-latex] (c) -- (f) node[fill=white,inner sep=2pt,midway] {{\small $\gamma$}};
    \end{tikzpicture} 
\end{subfigure}
\hspace{0.6cm}
\begin{subfigure}{.1\textwidth}
    \centering
    \begin{tikzpicture}   
    \node[state] (b) at (0,0) {};
     \node[state][label=right: \hspace{0.8cm}$\hookrightarrow^{\sigma}$] (d) [below =of b] {};
     \node[state] (f) [below =of d] {}; 
     \node[state] (g) [below right =of d] {};
     \node[state] (h) [below left =of d] {};
     \draw[-latex] (b) -- (d) node[fill=white,inner sep=2pt,midway] {$\alpha$}; 
     \draw[-latex] (d) -- (f) node[fill=white,inner sep=2pt,midway] {$\beta$}; 
     \draw[-latex] (d) -- (g) node[fill=white,inner sep=2pt,midway] {$\beta$};
     \draw[-latex] (d) -- (h) node[fill=white,inner sep=2pt,midway] {$\gamma$};
    \end{tikzpicture} 
\end{subfigure}
\hspace{1.5cm}
\begin{subfigure}{.1\textwidth}
    \centering
    \begin{tikzpicture}   
     \node[state] (b) at (0,0) {};
     \node[state] (d) [below =of b] {};
     \node[state] (f) [below =of d] {}; 
     \node[state] (g) [below right =of d] {};
     \node[state] (h) [below left =of d] {};
     \draw[-latex] (b) -- (d) node[fill=white,inner sep=2pt,midway] {$\alpha$}; 
     \draw[-latex] (d) -- (f) node[fill=white,inner sep=2pt,midway] {$\gamma$}; 
     \draw[-latex] (d) -- (g) node[fill=white,inner sep=2pt,midway] {$\beta$};
     \draw[-latex] (d) -- (h) node[fill=white,inner sep=2pt,midway] {$\beta$};
    \end{tikzpicture} 
\end{subfigure}
\caption{${\tt T} \hookrightarrow^{\pi^+} \circ \hookrightarrow^{\J} \circ \hookrightarrow^{\J} \circ \hookrightarrow^{\sigma} {\tt S}$.}
\end{subfigure}
\caption{${\tt T} \hookrightarrow^{\J} \circ \hookrightarrow^{\pi^+} {\tt S} \Longrightarrow {\tt T} \hookrightarrow^{\pi^+} \circ \hookrightarrow^{\J} \circ \hookrightarrow^{\J} \circ \hookrightarrow^{\sigma} {\tt S}$.}
\label{fig:JPi+2Normalization}
\end{figure}

Finally, we present the main theorem of the section.

\begin{theorem}[Rewrite Normalization Theorem]
\label{Normalization}
If ${\tt T} \hookrightarrow^{*} {\tt S}$, then ${\tt T} \hookrightarrow^\Omega {\tt S}$ for $\Omega$ a normal rewriting sequence. 
\end{theorem}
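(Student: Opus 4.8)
The plan is to prove Theorem~\ref{Normalization} by a sequence of four ``sorting passes'' applied to a fixed witnessing list $\Omega_0$ with ${\tt T}\hookrightarrow^{\Omega_0}{\tt S}$, working from the tail of the target normal form towards its head: first push all structural rules to the very end, then all atomic rules to just before them, then all decreasing rules, leaving finally only replicative and modal rules, which we split into $\Omega_{\pi^+}\smallfrown\Omega_\diamond$. At every stage we replace the current list by another list witnessing the same rewriting ${\tt T}\hookrightarrow^{*}{\tt S}$. The key point is the choice of termination measure for each pass: since the commutation corollaries may \emph{duplicate} rules (e.g.\ commuting $\4$ past $\lambda$ produces two $\lambda$'s, and commuting $\pi^+$ past $\J$ produces two $\pi^+$'s), an induction on list length is hopeless; instead each pass will be an induction on the number of maximal single-kind blocks of the rule being relocated, a quantity that does strictly decrease.

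\emph{Pass 1} uses Corollary~\ref{CorollarySigma}: viewing the list as alternating maximal $\sigma$-blocks and maximal non-$\sigma$-blocks, we commute the left-most $\sigma$-block past the non-$\sigma$-block following it, after which the $\sigma$-block produced on the right merges with the next $\sigma$-block, decreasing the number of $\sigma$-blocks by one. This yields a list of the form $\Omega'\smallfrown\Omega_\sigma$ with $\Omega'$ free of $\sigma$; note the non-$\sigma$ content of the list is untouched. \emph{Pass 2} does the same inside $\Omega'$ for atomic rules, using Corollary~\ref{CorollaryAtomic} (iterated along a block of rules from $\{\pi^+,\pi^-,\4,\lambda,\J\}$, since that corollary only crosses one rule at a time) to move the left-most atomic block rightward until it merges with the next, reducing the number of atomic blocks. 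No new $\sigma$-rules arise, so we obtain $\Omega''\smallfrown\Omega_\rho\smallfrown\Omega_\sigma$ with $\Omega''$ built from $\{\pi^+,\lambda,\J,\pi^-,\4\}$.

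\emph{Pass 3} pushes decreasing rules to the right end of $\Omega''$: splitting the non-decreasing material of $\Omega''$ into maximal single-kind sub-blocks, we apply Corollary~\ref{CorollaryDecreasing}(3) to commute a decreasing block past each such sub-block (whose kind is one of $\pi^+,\lambda,\J$) in turn, again with no $\sigma$-rules produced, until the left-most decreasing block merges with the next; iterating leaves $\Omega'''\smallfrown\Omega_\delta\smallfrown\Omega_\rho\smallfrown\Omega_\sigma$ with $\Omega'''$ over $\{\pi^+,\lambda,\J\}$. \emph{Pass 4}: inside $\Omega'''$ we commute the left-most modal block past the $\pi^+$-block immediately following it by Corollary~\ref{CorollaryExtensional}(2); this is the only step generating auxiliary $\sigma$-rules, and we immediately transport the $\sigma$-block so produced to the very end of the whole list using Corollary~\ref{CorollarySigma} — it legitimately crosses the remaining modal/$\pi^+$ material of $\Omega'''$, then $\Omega_\delta$, then $\Omega_\rho$ — where it is absorbed into $\Omega_\sigma$. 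Merging the two now-adjacent $\pi^+$-blocks and the two now-adjacent modal blocks drops the number of modal blocks by one. Iterating down to a single modal block produces exactly $\Omega_{\pi^+}\smallfrown\Omega_\diamond\smallfrown\Omega_\delta\smallfrown\Omega_\rho\smallfrown\Omega_\sigma$, a normal rewriting sequence, completing the proof.

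The main obstacle I anticipate is the bookkeeping in Pass 4: one must verify carefully that each $\sigma$-block spawned by commuting $\pi^+$ past $\J$ can always be routed all the way to the tail of the list (in particular without re-entering the prefix that Passes 1--3 have already normalised), and that the block-count measure really decreases despite the duplication of both $\pi^+$- and modal rules there. A subsidiary but necessary point is that Corollaries~\ref{CorollaryAtomic} and~\ref{CorollaryDecreasing}(3) only move a block past a single rule (resp.\ a single single-kind block), so each of Passes 2--4 carries an explicit inner induction on the length (resp.\ the number of single-kind sub-blocks) of the material being crossed; these inner inductions are routine given the corollaries but should be written out.
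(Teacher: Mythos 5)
Your proof is correct, and it rests on exactly the four corollaries the paper invokes, but you organise the induction differently. The paper's proof is an induction on the number of rewriting steps of the given derivation: one peels off a single rule $\mu$, normalises the remaining tail by the inductive hypothesis, and then inserts $\mu$ into the resulting normal form by pushing it rightward past the blocks that must precede its kind (an insertion-sort discipline). There the outer measure is just the derivation length, and the inner insertion is an induction on the fixed normal prefix being traversed, so the duplication of rules caused by Lemma~\ref{AtomicWithRest}, Proposition~\ref{DecreasingWithModal} and Corollary~\ref{CorollaryExtensional} never threatens termination --- it only swells the block being carried, never the material being crossed. You instead run a global multi-pass sort on a fixed witnessing list, and this forces you to confront the duplication head-on; your block-count measure is the right fix and is arguably the more honest accounting, since it makes explicit why the procedure terminates despite lists growing. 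What each approach buys: the paper's version has a simpler outer induction and localises all bookkeeping to a single insertion step; yours makes the global structure of the normalisation (and hence a possible implementation as successive passes) more transparent, at the cost of the extra block-merging argument. Both versions share the one genuinely delicate point you correctly isolate, namely that the $\sigma$-rules spawned by Corollary~\ref{CorollaryExtensional}(2) must be routed through the remaining replicative, modal, decreasing and atomic material to the tail via Corollary~\ref{CorollarySigma}, which preserves the crossed material verbatim and so does not disturb either measure. One small caveat: your aside that ``induction on list length is hopeless'' applies only to your global formulation, not to the paper's step-count induction, which survives precisely because the duplicated rules accumulate in the moving block rather than in the list being inducted over.
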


\begin{proof}
By induction on the number of rewriting steps that are performed using Corollaries \ref{CorollarySigma}, \ref{CorollaryAtomic}, \ref{CorollaryDecreasing} and \ref{CorollaryExtensional}.
\end{proof}

\section{Conclusions and future work}

We have provided a method for designing tree rewriting systems for different strictly positive logics and in particular re-cast the reflection calculus in this framework.
Although not explicitly stated, it easily follows from the presented techniques that a rewriting system for ${\bf K}^+$ can be defined by only considering atomic, structural, extensional and $\pi$-rules, and we are confident that our framework will find applications in positive fragments for other (poly)modal logics.
The use of abstract rewriting systems aids in the analysis of further properties like subformula property and admissibility of rules and provides a foundation for computationally efficient implementation.

Our work is based on a type-theoretic presentation of trees, which has the dual benefit of allowing for rules to be described in a precise and succinct manner and being particularly amenable to formalization in proof assistants.
Many of our results have already been implemented in the proof assistant Coq and our goal is to fully formalize our work.
Since rewriting normalization theorem and proof-theoretic investigations in general require checking multiple distinct cases in detail, the benefit of formalization is particularly clear in this type of proof, and indeed the community has been gravitating towards formalized proofs (see e.g.~\cite{ShillitoG22}).
In the framework of $\bf RC$, this has the added advantage that results often need not only to be {\em true,} but provable in suitable systems of arithmetic, and the latter can be made no more transparent than via formalization.
Last but not least, proofs implemented in Coq can be automatically extracted into fully verified algorithms, paving the road to reasoners with the highest degree of reliability attainable by current technology.

\bibliographystyle{plain}
\bibliography{bibliography}

\begin{thebibliography}{10}

\bibitem{BaaderCalvaneseEtAl2007}
Franz Baader, Diego Calvanese, Deborah~L. McGuinness, Daniele Nardi, and
  Peter~F. Patel-Schneider, editors.
\newblock {\em The Description Logic Handbook}.
\newblock Cambridge University Press, Cambridge, UK, 2 edition, 2007.

\bibitem{baader1998term}
Franz Baader and Tobias Nipkow.
\newblock {\em Term rewriting and all that}.
\newblock Cambridge university press, 1998.

\bibitem{LambdaBook}
Hendrik~Pieter Barendregt.
\newblock {\em The lambda calculus - its syntax and semantics}, volume 103 of
  {\em Studies in logic and the foundations of mathematics}.
\newblock North-Holland, 1985.

\bibitem{Beklemishev:2004}
L.~D. Beklemishev.
\newblock Provability algebras and proof-theoretic ordinals, {I}.
\newblock {\em Annals of Pure and Applied Logic}, 128:103--124, 2004.

\bibitem{Beklemishev2018-BEKANO}
Lev Beklemishev.
\newblock A note on strictly positive logics and word rewriting systems.
\newblock In Sergei Odintsov, editor, {\em Larisa Maksimova on Implication,
  Interpolation, and Definability}. Springer Verlag, 2018.

\bibitem{Beklemishev2012}
Lev~D. Beklemishev.
\newblock Calibrating provability logic.
\newblock In T~Bolander, T~Braüner, T~S Ghilardi, and L~Moss, editors, {\em
  Advances in modal logic}, volume~9, pages 89--94. College Publications,
  London, 2012.

\bibitem{Beklemishev2014Positive}
Lev~D. Beklemishev.
\newblock Positive provability logic for uniform reflection principles.
\newblock {\em Annals of Pure and Applied Logic}, 165(1):82--105, 2014.

\bibitem{dashkov2012positive}
Evgenij~Vladimirovich Dashkov.
\newblock On the positive fragment of the polymodal provability logic {GLP}.
\newblock {\em Mathematical Notes}, 91:318--333, 2012.

\bibitem{fitting2002types}
Melvin Fitting.
\newblock {\em Types, tableaus, and G{\"o}del’s god}.
\newblock Springer Science \& Business Media, 2002.

\bibitem{Foret1992}
Annie Foret.
\newblock Rewrite rule systems for modal propositional logic.
\newblock {\em The Journal of Logic Programming}, 12(3):281--298, 1992.

\bibitem{TreePaper}
Blaise Genest, Anca Muscholl, Olivier Serre, and Marc Zeitoun.
\newblock Tree pattern rewriting systems.
\newblock In Sungdeok~(Steve) Cha, Jin-Young Choi, Moonzoo Kim, Insup Lee, and
  Mahesh Viswanathan, editors, {\em Automated Technology for Verification and
  Analysis}, pages 332--346, Berlin, Heidelberg, 2008. Springer Berlin
  Heidelberg.

\bibitem{goubault2001proof}
Jean Goubault-Larrecq and Ian Mackie.
\newblock {\em Proof theory and automated deduction}.
\newblock Springer Science \& Business Media, 2001.

\bibitem{Japaridze:1988}
G.~Japaridze.
\newblock The polymodal provability logic.
\newblock In {\em Intensional logics and logical structure of theories:
  material from the {F}ourth {S}oviet-{F}innish Symposium on Logic}.
  Metsniereba, Telavi, 1988.
\newblock In {R}ussian.

\bibitem{kikot2019kripke}
Stanislav Kikot, Agi Kurucz, Yoshihito Tanaka, Frank Wolter, and Michael
  Zakharyaschev.
\newblock Kripke completeness of strictly positive modal logics over
  meet-semilattices with operators.
\newblock {\em The Journal of Symbolic Logic}, 84(2):533--588, 2019.

\bibitem{newborn2000automated}
Monty Newborn.
\newblock {\em Automated theorem proving: theory and practice}.
\newblock Springer Science \& Business Media, 2000.

\bibitem{pierce2002types}
Benjamin~C Pierce.
\newblock {\em Types and programming languages}.
\newblock MIT press, 2002.

\bibitem{shamkanov2015nested}
Daniyar Shamkanov.
\newblock Nested sequents for provability logic {GLP}.
\newblock {\em Logic Journal of the IGPL}, 23(5):789--815, 2015.

\bibitem{ShillitoG22}
Ian Shillito and Rajeev Gor{\'{e}}.
\newblock Direct elimination of additive-cuts in {GL}4ip: verified and
  extracted.
\newblock In David Fern{\'{a}}ndez{-}Duque, Alessandra Palmigiano, and Sophie
  Pinchinat, editors, {\em Advances in Modal Logic, AiML 2022, Rennes, France,
  August 22-25, 2022}, pages 429--449. College Publications, 2022.

\bibitem{Terese03}
Terese.
\newblock {\em Term Rewriting Systems}, volume~55 of {\em Cambridge Tracts in
  Theoretical Computer Science}.
\newblock Cambridge University Press, 2003.

\bibitem{wos1992automated}
Larry Wos, Ross Overbeek, Ewing Lusk, and Jim Boyle.
\newblock {\em Automated reasoning introduction and applications}.
\newblock McGraw-Hill, Inc., 1992.

\end{thebibliography}


\end{document}